\documentclass[10pt,a4paper]{article}
\usepackage[a4paper]{geometry}
\geometry{top=1.2in, bottom=0.8in, left=1.0in, right=0.8in}
\usepackage[T1]{fontenc}
\usepackage[utf8]{inputenc}
\usepackage{authblk}
\usepackage[normalem]{ulem}
\RequirePackage[numbers]{natbib}
\usepackage{amsmath,amsfonts,amssymb,amsthm}
\usepackage{titlesec}
\setcounter{secnumdepth}{4}

\titleformat{\section}
  {\normalfont\fontsize{11}{11}\bfseries}{\thesection}{1em}{}
\titleformat{\subsection}
  {\normalfont\fontsize{11}{11}\bfseries}{\thesubsection}{1em}{}
\titleformat{\subsubsection}
  {\normalfont\fontsize{11}{11}\bfseries}{\thesubsubsection}{1em}{}
\titleformat{\paragraph}
  {\normalfont\fontsize{11}{11}\bfseries}{\theparagraph}{1em}{}

\makeatletter
\newcommand{\pushright}[1]{\ifmeasuring@#1\else\omit\hfill$\displaystyle#1$\fi\ignorespaces}
\newcommand{\pushleft}[1]{\ifmeasuring@#1\else\omit$\displaystyle#1$\hfill\fi\ignorespaces}
\makeatother

\usepackage{natbib}
\usepackage[usenames,dvipsnames,svgnames,table]{xcolor}
\usepackage{mathtools}	
\usepackage{booktabs}
\usepackage{subfigure}
\usepackage{graphicx}
\usepackage{enumerate}
\usepackage[normalem]{ulem}
\mathtoolsset{showonlyrefs=true}

\numberwithin{table}{section}
\usepackage[hyperindex,breaklinks]{hyperref}
\hypersetup{frenchlinks=true}
\usepackage{float}

\makeatletter

\newtheorem{assumption}{Assumption}
\numberwithin{assumption}{section}
\newtheorem{theorem}{Theorem}
\numberwithin{theorem}{section}
\newtheorem{lemma}{Lemma}
\numberwithin{lemma}{section}
\newtheorem{proposition}{Proposition}
\numberwithin{proposition}{section}

\numberwithin{corollary}{section}

\numberwithin{definition}{section}
\newtheorem{example}{Example}
\numberwithin{example}{section}
\newtheorem{remark}{Remark}
\numberwithin{remark}{section}

\newcommand{\cC}{ {\mathcal{C}}}

\newcommand{\cF}{ {\mathcal{F}}}

\newcommand{\cK}{ {\mathcal{K}}}
\newcommand{\cL}{ {\mathcal{L}}}

\newcommand{\cO}{ {\mathcal{O}}}

\newcommand{\cW}{ {\mathcal{W}}}

\newcommand{\fW}{\mathfrak{W}}

\newcommand{\bX}{X}

\newcommand{\E}{ {\mathbb{E}}}

\newcommand{\K}{ {\mathbb{K}}}

\newcommand{\M}{\mathbb{M}}
\newcommand{\N}{\mathbb{N}}
\renewcommand{\P}{ {\mathbb{P}}}
\newcommand{\R}{\mathbb{R}}

\newcommand{\ind}{\mathbf{1}}
\newcommand{\dd}{ {\mathrm{d}}}

\newcommand{\MalWt}{\cW}

\renewcommand{\to}{\rightarrow}
\newcommand{\del}{\partial}

\newcommand{\x}{\times}

\DeclareMathOperator{\arcosh}{arcosh}

\newcommand{\eqlnostar}[2]{\begin{align}\label{#1}#2\end{align}}
\newcommand{\eqstar}[1]{\begin{align*}#1\end{align*}}
\newcommand{\eq}[1]{\ifthenelse{\equal{#1}{*}}
  {\eqstar}
  {\eqlnostar{#1}}
 }
\makeatother

\title{Branching diffusion representation of semi-linear elliptic PDEs and estimation using Monte Carlo method}

\author[1]{Ankush Agarwal*}
\affil[1]{Adam Smith Business School, University of Glasgow, University Avenue, G128QQ Glasgow, United Kingdom. Email:{\tt ankush.agarwal@glasgow.ac.uk} }
\author[2]{Julien Claisse}
\affil[2]{Centre de Math\'ematiques Appliqu\'ees (CMAP), \'Ecole Polytechnique and CNRS, Route de Saclay, 91128 Palaiseau Cedex, France. Email: {\tt julien.claisse@polytechnique.edu}}
\date{\today}

\begin{document}
\maketitle

\begin{abstract}
We study semi-linear elliptic PDEs with polynomial non-linearity and provide a probabilistic representation of their solution using branching diffusion processes. When the non-linearity involves the unknown function but not its derivatives, we extend previous results in the literature by showing that our probabilistic representation provides a solution to the PDE without assuming its existence. In the general case, we derive a new representation of the solution by using marked branching diffusion processes and automatic differentiation formulas to account for the non-linear gradient term. In both cases, we develop new theoretical tools to provide explicit sufficient conditions under which our probabilistic representations hold. As an application, we consider several examples including multi-dimensional semi-linear elliptic PDEs and estimate their solution by using the Monte Carlo method.

{\bf Key words:} automatic differentiation formula, branching diffusion processes, elliptic, exit time, Monte Carlo method, partial differential equation, semi-linear  

{\bf AMS subject classifications (2010):} 35J61, 60H30, 60J85, 65C05

\end{abstract}

\section{Introduction}
In this paper, we are interested in the following class of semi-linear elliptic partial differential equations (PDEs): given a bounded domain $\cO \subset \R^d$, $f:\cO\times\R\times\R^d\to\R$, $h:\del\cO\to\R$,
\eqlnostar{eq:quasilinear}{
\cL u + f\bigl(u,Du\bigr) =0~~ \text{in } \cO, \quad~ u=h ~~ \text{on }\partial\cO,
}
where $\cL$ is the infinitesimal generator of a diffusion. This class of PDEs arises naturally in many fields of science~(see, \textit{e.g.}, \cite{badiale2011semilinear,lions1982semilinear} and references therein). In most cases, the related PDEs only involve non-linearity in the unknown function but not in its derivatives. For instance, the Poisson--Boltzmann equation or 
the steady states in nonlinear equations of the Klein--Gordon or Schr\"odinger type. Besides, the stationary Burgers equation or Hamilton-Jacobi-Bellman equations when the diffusion coefficient is uncontrolled are examples of PDEs where the non-linearity involves both the unknown function and its first order derivatives. In this paper, we provide existence results for this class of PDEs when the generator $f$ is polynomial and derive a new probabilistic representation of the solution which is well suited for numerical application, especially in high dimensions, using the Monte Carlo method.

The classical probabilistic approach for semi-linear PDEs relies on the theory of backward stochastic differential equations (BSDEs) initiated by Pardoux and Peng~\cite{pardoux1990bsde}. The case of elliptic PDE with Dirichlet condition was first studied by Darling and Pardoux~\cite{darling1997} for Lipschitz generator and later more general results were provided by Pardoux~\cite{pardoux1999} and Briand \textit{et al.}~\cite{briand2003}.  These results rely on the so-called monotonicity assumption, which requires $y\mapsto f(x,y,z) - \mu y$ to be decreasing for some constant $\mu\in\R$, and the assumption that $f$ has a linear or quadratic growth in $z$. We do not put such restrictions in our setting and propose a method to investigate a different class of elliptic PDEs with a generator which is a polynomial in $y$ and $z$.
  
Our probabilistic representation is based on \emph{branching diffusion processes}. These processes describe the evolution of a population of independent and identical particles moving according to a diffusion process. They were first introduced by Skorokhod~\cite{skorohod1964branching} and later studied, more thoroughly and systematically, in a series of papers by Ikeda \textit{et al.}~\cite{ikeda1969a,ikeda1969b,ikeda1969c}. In particular, these authors, using branching diffusion processes, established a probabilistic representation of semi-linear parabolic PDEs of the form 
\eqlnostar{eq:parabolicsemi-linear}{
\partial_t u + \cL u + \beta\left(\sum_{l\in\N}{p_l u^l} - u\right) =0 ~~ \text{in } [0,T)\times \R^d,\quad~ u(T,\cdot)=g ~~ \text{in } \R^d,
}
where $g:\R^d\to\R$, $\beta>0$ and $(p_l)_{l\in\N}$ is a probability mass function. More precisely, they consider a process where each particle moves according to a diffusion with generator $\cL$ and dies at an exponentially distributed random time with parameter $\beta$ to give birth to $l$ offsprings with probability $p_l$. Then, the solution of PDE~\eqref{eq:parabolicsemi-linear}  is expressed as $\E[\prod_{i=1}^{N_T} g(X_T^i)]$ where $N_T$ and $(X_T^i)_{i=1,\ldots,N_T}$ denote the number of particles and their positions at time $T$ respectively. The special case $p_2=1$, which corresponds to the celebrated Fisher--KPP equation, has been particularly well-studied (see, \textit{e.g.}, McKean~\cite{mckean1975kpp}).

In Henry-Labord\`ere~\textit{et al.}~\cite{henry2014numerical}, the authors extended the probabilistic representation of (path-dependent) PDE~\eqref{eq:parabolicsemi-linear} where the probability mass function $(p_l)_{l\in\N}$ is replaced by an arbitrary real-valued sequence of functions. See also Rasulov \textit{et al.}~\cite{rasulov2010}. This result was further extended in Henry-Labord\`ere \textit{et al.}~\cite{henry2016branching} to parabolic semi-linear PDE of the form
\eqlnostar{eq:parabolicquasilinear}{
\partial_t u + \cL u + f(u,Du) =0 ~~ \text{in } [0,T)\times \R^d,\quad~ u(T,\cdot)=g ~~ \text{in } \R^d,
}
where $f$ is a $(d+1)$--variate polynomial. 
The main idea consists of introducing marked particles which carry a weight function given by the Bismuth-Elworthy-Li formula to account for non-linearity in gradient of the solution.
Under conditions of ``small non-linearity'' or small maturity $T$, the authors showed that their probabilistic representation provides a continuously differentiable viscosity solution to PDE~\eqref{eq:parabolicquasilinear}. 
They also performed numerical simulations to illustrate the accuracy of their method to solve PDEs using the Monte Carlo method.

The aim of this paper is to extend the results in  Henry-Labord\`ere \textit{et al.} \cite{henry2014numerical,henry2016branching} to the case of elliptic PDEs with Dirichlet condition. The first result on the link between semi-linear elliptic PDE and branching diffusion processes was obtained by Watanabe~\cite{watanabe1965extinction} who derived a criterion for extinction of branching Brownian motion absorbed at the boundary of a domain.
Recently, Bossy \textit{et al.}~\cite{champagnat2015pde} extended this result to derive a probabilistic representation for PDE of the form
\eqlnostar{eq:champagnatpde}{
\cL u + \beta\left(\sum_{l\in\N}{c_l u^l} - u\right) =0~~ \text{in } \cO, \quad~ u=h ~~ \text{on }\partial\cO,
}
where $(c_l)_{l\in\N}$ is a sequence of real-valued functions. They used it to compute a Monte Carlo approximation of the solution to Poisson-Boltzmann equation in dimension three.
One of the critical assumptions in their work is the existence of a smooth solution to PDE~\eqref{eq:champagnatpde}. In this paper, we do not require this assumption as we show directly that the probabilistic representation provides a continuous viscosity solution to PDE~\eqref{eq:champagnatpde}.
The main difficulty compared to~\cite{henry2014numerical} is to ensure the continuity and integrability of the probabilistic representation. In particular, the arguments of small maturity used in~\cite{henry2014numerical} cannot be exploited here and instead, we resolve this problem by developing new tools which ensure that our probabilistic representation holds in small domain.

In the second part of our paper, we perform a rigorous analysis for the case of semi-linear elliptic PDE involving non-linearity in gradient of the unknown function. To the best of our knowledge, this is the first paper in the literature to provide a representation for this class of PDE using branching diffusion processes. In contrast with Henry-Labord\`ere \textit{et al.}~\cite{henry2016branching}, Malliavin calculus cannot be used in this setting since the exit time of a diffusion from a domain is not differentiable in the Malliavin sense. However, Delarue~\cite{delarue2003estimates} and Gobet~\cite{gobet2004revisiting} have established a suitable automatic differentiation formula, based on the work of Thalmaier~\cite{thalmaier1997adf}, which allows us to derive a probabilistic representation analogous to~\cite{henry2016branching}. Under the assumption that the non-linear gradient term vanishes at the boundary, it provides a continuously differentiable solution to PDE~\eqref{eq:quasilinear} when $f$ is a multivariate polynomial.

Importantly, our probabilistic representation provides a means to evaluate solutions of semi-linear elliptic PDEs by using the Monte Carlo method whose accuracy relies on the dimensionless central limit theorem. For this reason, it is particularly well suited for numerical applications in high dimension where deterministic methods usually fail to provide accurate estimates. Regarding the probabilistic approach, although different numerical methods for BSDEs have been introduced (see, \textit{e.g.},~\cite{bouchard2004numericbsde, zhang2004numericbsde}), to the best of our knowledge, they have never been used in the literature for elliptic PDEs.
\footnote{A related problem has been studied by Bouchard and Menozzi~\cite{bouchard2009} for the case of parabolic PDEs in a cylindrical domain with finite time horizon.}
In addition, the classical approach to solve BSDEs induces a discretization bias and involves a sequence of conditional expectation estimators. The available methods to estimate conditional expectations, such as least-squares regression (see Gobet {\em et al}.~\cite{gobet2005regression}), induce an additional approximation bias and are computationally expensive in high dimension.

We consider several numerical examples, including multi-dimensional elliptic PDEs, to illustrate the accuracy of our method. One of the critical steps in the algorithm consists of simulating the first exit time and position of a diffusion from a domain. There are three classical approaches to perform this task -- walk on spheres scheme, walk on parallelepipeds scheme and Euler discretization method (see Bouchard \textit{et al.}~\citep{bouchard2013exittime} and references therein).
For instance, the walk on spheres scheme was used by Bossy \textit{et al.}~\citep{champagnat2015pde} to estimate the solution of Poisson-Boltzmann equation in dimension three. In this paper, we consider numerical examples which involve Brownian motion over a rectangular domain and therefore, we generate unbiased samples for exit time and position using the walk on squares scheme as introduced in Faure~\cite{faure1992simulation} and in Milstein and Tretyakov~\cite{milstein1999simulation}, and implemented in the library developed by Lejay~\cite{lejay:inria-00561409}.

The rest of the paper is organized as follows. In the next section, we provide a precise formulation of the problem and introduce branching diffusion processes used to derive our probabilistic representation. Then, we consider the case of semi-linear PDE with linear gradient term in Section~\ref{sec:semilinear} and give explicit sufficient conditions 
to ensure that the probabilistic representation provides a continuous viscosity solution to the PDE. In Section~\ref{sec:quasilinear}, we perform the analysis  for semi-linear PDE with non-linear gradient term. In particular, we establish appropriate automatic differentiation formulas. Finally, in Section~\ref{section: examples}, we present several numerical examples to illustrate the applicability of our results in different settings. We provide the proof of the automatic differentiation formula in Appendix~\ref{proof_autodiff}.

\subsection{Notations}
Any element $x \in \R^d, d \geq 1,$ is a column vector with $i$th component $x_i$ and Euclidean norm $|x|.$ $x \cdot y$ denotes the usual dot product and $d(x,y)$ denotes the Euclidean distance for any $x,y \in\R^d.$  Given an open set $\cO \subset \R^d,$ $\bar{\cO}$ denotes its closure, $\del \cO$ its boundary and  $\mathrm{diam}(\cO)$ its diameter. For any  $g:\cO\to\R$, the supremum norm is defined as $\|g\|_{\infty} = \sup \{ |g(x)|: x \in \cO\}.$ $\cC(\cO)$ (resp. $\cC(\bar{\cO})$) denotes the set of continuous functions on $\cO$ (resp. $\bar{\cO}$). $\cC^k(\cO)$ (resp. $\cC^k(\bar{\cO})$) denotes the set of functions with continuous derivatives of all orders less than or equal to $k$ (resp. which further have continuous extensions to $\bar{\cO}$). $\cC^{k,\alpha}(\cO)$ (resp. $\cC^{k,\alpha}(\bar{\cO})$) are subspaces of $\cC^k(\cO)$ (resp. $\cC^k(\bar{\cO})$) consisting of functions whose partial derivatives of order less than or equal to $k$ are locally (resp. globally) H\"{o}lder continuous with exponent $ 0 < \alpha < 1.$ Given $T>0$, $\cC^{1,k}((0,T]\times\cO)$ denotes the set of functions with continuous first derivative with respect to the first component and continuous derivatives of all orders less than or equal to $ k$ with respect to the second component. For a smooth function $u(t,x),$ $Du$ and $D^2u$ stand for, respectively, its gradient (as a row vector) and Hessian matrix with respect to its second component. Furthermore, if $g: \R^m \to \R^n $ is a differentiable function, its gradient $D g = (\partial_{x_{1}}g(x), \ldots,\partial_{x_{m}}g(x) )$ takes values in $\R^n\otimes\R^m.$ For $d \geq 1,  \M^d$ denotes the set of all $d\times d$ matrices and $I_d \in \M^d$ the identity matrix. 

\section{Problem Definition}
\label{sec:framework}

\subsection{A Class of Semi-Linear Elliptic PDE}
Let $(\mu,\sigma):\R^d \times \R^d \to \R^d \times \M^d$ denote the drift and diffusion coefficient.
Then for a non-negative integer $m,$ we consider a subset $L \subset \N^{m+1}$ and a generator function $f:\R^d \times \R \times \R^d\to\R$ defined as 
\eqstar{
f(x,y,z) := \sum_{l = (l_0, l_1, \ldots, l_m) \in L} c_l(x) y^{l_0} \prod^m_{i=1} (b_i(x)\cdot z)^{l_i},
}
where $(c_l)_{l \in L}, c_l:\R^d \to \R$ and $(b_i)_{i =1,\ldots,m}, b_i: \R^d \to \R^d$ are sequences of functions. Furthermore, for every $l = (l_0, l_1, \ldots, l_m),$ we denote $\vert l \vert := \sum^m_{i=0}l_i.$ Given a bounded domain $\cO\subset \R^d$, we consider the following semi-linear elliptic PDE:
\eqlnostar{eq:main elliptic pde}{
\cL u + \beta \left(f\bigl(u,Du\bigr)-u\right) =0~~ \text{in } \cO,\quad~ u &= h~~ \text{on }\del \cO,
}
where $\beta$ is a positive constant, $h:\del\cO\to\R$ is the Dirichlet boundary condition and $\cL$ is the infinitesimal generator associated to a diffusion process with parameters $(\mu,\sigma)$. Under a set of general assumptions, we provide a probabilistic representation of solution $u$ using the theory of branching diffusion processes. We list the assumptions on parameters of PDE~\eqref{eq:main elliptic pde} which are needed for our results at the outset.

\begin{assumption}
\label{ass:pde}
\emph{(i)} The functions $b_i$, $c_l$ are continuous on $\bar{\cO}$. 
\newline
\emph{(ii)} The function $h$ is continuous on $\del\cO$. 
\newline
\emph{(iii)} The function $(x,y,\zeta)\mapsto \sum_{l \in L} c_l(x) y^{l_0} \prod^m_{i=1} \zeta_i^{l_i}$ is continuous on $\bar{\cO}\x\R\x\R^{m}$.
\end{assumption}
Part~(iii) above implies that the multivariate power series $(y,\zeta)\mapsto \sum_{l \in L} c_l(x) y^{l_0} \prod^m_{i=1} \zeta_i^{l_i}$ has an infinite radius of convergence for all $x\in\bar{\cO}$. It is a simplifying assumption which avoids the use of localization arguments in the rest of the paper.

\subsection{Marked Branching Diffusion Processes with Absorption}
\label{section:branching diffusion recap}

A (age-dependent) marked branching diffusion process is characterized by diffusion parameters $(\mu,\sigma)$, a probability density function $\rho: \R_+\to \R_+$, and a probability mass function $(p_l)_{l \in L}.$ In this process, we start with one particle of mark $0$ at position $x \in \R^d$ which undergoes a diffusion with parameters $(\mu,\sigma)$ during its lifetime distributed according to $\rho.$ At the end of its lifetime (arrival time), the particle dies and gives rise to $|l|=\sum_{i=0}^{m} l_i$ offsprings with probability $p_l$, among which $l_0$ have mark 0, $l_1$ have mark 1, and so on. After their birth, each offspring performs the same but an independent branching diffusion process as the parent particle. 
Additionally, we consider that particles are absorbed at the boundary, \textit{i.e.}, they die without giving rise to any offspring when they leave the domain $\cO$. 
In order to construct the above process, we denote by $\K:=\{\emptyset\}\cup\bigcup_{n\geq 1}{{\N}^n}$ the set of labels and we consider the probability space $(\Omega,\cF,\P)$ equipped with
\begin{itemize}
\item a sequence of i.i.d.\ positive random variables $(\tau^{k})_{k\in\K}$ distributed with density function $\rho$
\item a sequence of i.i.d.\ random elements $(I^k)_{k\in\K}$ with $\P(I^k = l) = p_l, l \in L$
\item a sequence of independent $d$-dimensional Brownian motions  $(W^{k})_{k\in\K}$ 
\end{itemize}
Furthermore, we consider the sequences $(\tau^{k})_{k\in\K}, (I^{k})_{k\in\K}$ and $ (W^{k})_{k\in\K}$ to be mutually independent. The age-dependent branching process is constructed as follows:

\begin{enumerate}
\item Start from one particle at position $x \in \R^d$ and index it by label $\emptyset.$ This is the common ancestor of all particles, the only particle of generation 0. For notational convenience, we simply write $\tau$, $I$ and $W$ instead of $\tau^{\emptyset}$, $I^{\emptyset}$ and $W^{\emptyset}$. The dynamic of particle $\emptyset$ is given as follows:
\begin{itemize}
\item The position $X^{\emptyset}=X^x$ of the particle during its lifetime is given by
\eqstar{
X^x_t = x + \int^t_0 \mu(X^x_s) \,ds + \int^t_0 \sigma(X^x_s) \,dW_s, \quad \P-\text{a.s.},
}
\item The arrival time of the particle is given by 
\eqstar{
T^{\emptyset}:=\tau\wedge\eta^x\quad\text{where}~~\eta^x := \inf \Big\{t \geq 0 ;\, X^x_t \notin \cO\Big\}.
}
\item At the arrival time, if $\eta^x\leq\tau$, then the particle $\emptyset$ dies without giving rise to any offspring, else the particle $\emptyset$ dies and gives rise to $\vert I\vert$ offsprings which belong to the first generation, and are indexed by label $i$ for $i = 0,\ldots,\vert I \vert-1.$

\item Given $I = (I_0, I_1, \ldots, I_m),$ we have $\vert I \vert = \sum^m_{i=0} I_i$ offspring particles, among which the first $I_0$ have mark 0, $I_1$ have mark 1, and so on, so that each particle has mark $i$ for $i = 0, \ldots,m.$
\end{itemize}  

\item For generation $n\geq 1,$ let the label for a particle be given as $k = (k_1,\ldots,k_{n-1},k_n) \in {\N}^n.$ Furthermore, denote by $k^- := (k_1,\ldots,k_{n-2},k_{n-1})$ the parent particle of $k$. The particle $k$ starts from $X^{k^-}_{T_{k^-}}$ at time $T_{k^-}$:
\begin{itemize}
\item The position $X^k$ of the particle during its lifetime is given by
\eqstar{
X^k_t = X^{k^-}_{T_{k^-}} + \int^t_{T_{k^-}} \mu(X^k_s) \,ds + \int^t_{T_{k^-}} \sigma(X^k_s) \,dW^k_{s-T_{k-}}, \quad \P-\text{a.s.}
}

\item The arrival time of the particle is given by
\eqstar{
T_k := \left(T_{k^-} + \tau^k\right)\wedge 
\inf \Big\{t \geq T_{k^-} ;\, X^k_t \notin \cO\Big\}.
}
\item At the arrival time, if $X^k_{T_k}\notin\cO$, then the particle $k$ dies without giving rise to any offspring, else the particle $k$ dies and gives rise to $\vert I^k\vert$ offsprings which belong to the $(n+1)$th generation, and are indexed by label $(k_1,\ldots,k_{n-1},k_n, i)$ for $i = 0,\ldots,\vert I^k \vert -1.$

\item Given $I^k = (I^k_0, I^k_1, \ldots, I^k_m),$ we have $\vert I^k \vert = \sum^m_{i=0} I^k_i$ offspring particles, among which the first $I^k_0$ have mark 0, $I^k_1$ have mark 1, and so on, so that each particle has mark $i$ for $i = 0, \ldots,m.$
\end{itemize}
\end{enumerate}
In addition, we denote by $\cK^x_n$ the collection of particles of the $n$th generation and by $\cK^x=\bigcup_{n\in\N} {\cK^x_n}$ the collection of all particles.
Finally, we introduce the filtration
\eqstar{ 
\cF_n := \sigma\left( \tau^{k}, I^{k}, W^{k}, k\in\{\emptyset\} \cup \bigcup_{i=1}^{n} {\N}^i \right),\quad n\in\N.
}

We make the following assumptions to ensure that the branching diffusion process above is well-defined and for further developments.
\begin{assumption}
  \label{ass:bdp}
  \emph{(i)} The probability distribution $(p_l)_{l\in L}$ satisfies $p_l>0$ and $\sum_{l\in L} { |l|  p_l} < \infty$.\\
  \emph{(ii)} The probability density $\rho$ is strictly positive.\\
  \emph{(iii)} The coefficients $(\mu,\sigma)$ are Lipschitz on $\bar{\cO}$.
\end{assumption}
Part (i) of the assumption above ensures that the number of particles remains finite in finite time in the underlying branching process, see, \textit{e.g.}, Athreya and Ney~\citep[Theorem 4.1.1]{athreya2012branching}. In addition, under assertion~(iii) , there exists a unique solution (up to the boundary) to the stochastic differential equation (SDE) corresponding to $(\mu,\sigma)$ and so the branching diffusion process is well-defined.

\begin{assumption}
  \label{ass:extinction}
  The branching diffusion process goes extinct almost surely.
\end{assumption}

It is clearly sufficient to assume that $\sum_{l\in L} { |l|  p_l} \leq 1$ for Assumption~\ref{ass:extinction} to hold.
However, since particles are absorbed at the boundary, we can derive much weaker conditions, especially if the domain is small.
For instance, in the case of branching Brownian motion with exponential lifetime of parameter $\beta$, Assumption~\ref{ass:extinction} is equivalent to 
\eqlnostar{eq:extinction}{
\beta\left(\sum_{l\in L} { |l|  p_l} - 1\right) - \frac{\lambda_1}{2} \leq 0,
} 
where $\lambda_1$ is the first positive eigenvalue of the Laplacian operator in the domain $\cO$, see Sevast$^\prime$yanov~\cite{sevastyanov1961extinction} or Watanabe~\citep{watanabe1965extinction}. Also see Remark~\ref{rem:extinction} below for further developments.

\section{Semi-Linear PDEs with Linear Gradient Term}
\label{sec:semilinear}

In this section, we are concerned with semi-linear PDEs with polynomial non-linearity involving the unknown function but not its derivatives, \textit{i.e.}, we assume that $m=0$ in Section~\ref{sec:framework} so that all  the particles have the same mark $0$ and  PDE~\eqref{eq:main elliptic pde} reads as 
\eqlnostar{eq:semi-linearpde}{
\cL u + \beta\left(f(u) - u\right) =0 ~~ \text{in } \cO, \quad~ u = h ~~ \text{on }\del \cO,
}
where $f(x,y)=\sum_{l\in L}{c_l(x) y^l}$, $L\subset\N$. Throughout this section, we suppose that Assumption \ref{ass:pde}--\ref{ass:extinction} remain valid. Additionally, we set $\rho$ as the density of exponential distribution with parameter $\beta$.

\subsection{Probabilistic Representation}
\label{sec:mainresultsemi}
We consider a branching diffusion process starting from $x\in\cO$ as in Section~\ref{section:branching diffusion recap} and we introduce the following random variable:
\eqstar{ 
\psi^x := \prod_{\substack{{k \in \cK^x}\\{X^k_{T_k}\notin \cO}}} h(X^k_{T_k}) \prod_{\substack{{k \in \cK^x}\\{X^k_{T_k}\in \cO}}} \frac{c_{I^k}(X^k_{T_k})}{p_{I^k}}.
}

\begin{proposition}
\label{prop:semi-linearcase}
Suppose that PDE~\eqref{eq:semi-linearpde} has a solution $u\in\cC^2(\cO)\cap\cC(\bar{\cO})$ such that the sequence $(\psi^x_n)_{n\in\N}$ given by
\eqstar{
 \psi^x_n := \prod_{\substack{{k \in \cup^n_{i=0}\cK^x_i}\\{X^k_{T_k}\notin \cO}}} h(X^k_{T_k}) \prod_{\substack{{k \in \cup^n_{i=0}\cK^x_i}\\{X^k_{T_k}\in \cO}}} \frac{c_{I^k}(X^k_{T_k})}{p_{I^k}} \prod_{k \in \cK^x_{n+1}}u(X^k_{T_{k^-}}),
}
is uniformly integrable, then we have $u(x) = \E[\psi^x]$.
\end{proposition}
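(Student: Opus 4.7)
The plan is to show by induction on $n\geq 0$ that $\E[\psi^x_n]=u(x)$ for every $x\in\cO$, and then pass to the limit as $n\to\infty$ using Assumption~\ref{ass:extinction} together with the assumed uniform integrability.

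For the base case $n=0$, I observe that $\cK^x_0=\{\emptyset\}$, $T_\emptyset=\tau\wedge\eta^x$, and every particle of $\cK^x_1$ is born at $X^x_\tau$ on the event $\{\tau\leq\eta^x\}$, so
$$\psi^x_0 = \ind_{\{\eta^x<\tau\}}h(X^x_{\eta^x}) + \ind_{\{\tau\leq\eta^x\}}\frac{c_I(X^x_\tau)}{p_I}u(X^x_\tau)^{|I|}.$$
Using that $\tau$ is exponential with rate $\beta$ and independent of $X^x$, and that $I$ has law $(p_l)_{l\in L}$ independently, taking expectation collapses this to
$$\E[\psi^x_0] = \E\bigl[e^{-\beta\eta^x}h(X^x_{\eta^x})\bigr] + \E\Bigl[\int_0^{\eta^x}\beta e^{-\beta t}f\bigl(X^x_t,u(X^x_t)\bigr)\,dt\Bigr].$$
I then identify this with $u(x)$ via Feynman-Kac: PDE~\eqref{eq:semi-linearpde} rewrites as $\cL u-\beta u=-\beta f(\cdot,u)$ with boundary data $h$, and applying It\^o's formula to $e^{-\beta t}u(X^x_t)$ produces exactly the identity above.

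For the inductive step, I invoke the branching property: conditionally on $\cF_0$, the $|I|$ subtrees rooted at the offspring of $\emptyset$ are iid copies of the whole branching diffusion starting from $X^x_\tau$. Writing $\psi^{(i)}_n$ for the depth-$n$ analogue of $\psi^\cdot_n$ built on the $i$-th subtree (with starting position $X^x_\tau$), a direct inspection of the products defining $\psi^x_{n+1}$ and $\psi^{(i)}_n$ yields the recursion
$$\psi^x_{n+1} = \ind_{\{\eta^x<\tau\}}h(X^x_{\eta^x}) + \ind_{\{\tau\leq\eta^x\}}\frac{c_I(X^x_\tau)}{p_I}\prod_{i=0}^{|I|-1}\psi^{(i)}_n.$$
Conditioning on $\cF_0$ and using the conditional independence of the subtrees together with the induction hypothesis $\E[\psi^y_n]=u(y)$, I recover exactly the expression for $\psi^x_0$ above; hence $\E[\psi^x_{n+1}]=\E[\psi^x_0]=u(x)$ by the base case.

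Finally, Assumption~\ref{ass:extinction} supplies an almost surely finite $N$ with $\cK^x_n=\emptyset$ for every $n>N$; for such $n$ the last product in the definition of $\psi^x_n$ is empty and $\bigcup_{i=0}^n\cK^x_i=\cK^x$, so $\psi^x_n=\psi^x$. Thus $\psi^x_n\to\psi^x$ almost surely, and the assumed uniform integrability upgrades this to $L^1$ convergence, giving $\E[\psi^x]=\lim_n\E[\psi^x_n]=u(x)$. The main technical subtlety is in the Feynman-Kac step: since $u$ is only $\cC^2$ inside $\cO$ and merely continuous up to $\partial\cO$, the gradient $Du$ may blow up near the boundary so the stochastic-integral term from It\^o's formula need not be a true martingale on $[0,\eta^x]$. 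I would handle this by first writing the identity on $[0,\eta^x\wedge\sigma_k]$ for an exhaustion $(\sigma_k)$ of $\cO$ by first-exit times of relatively compact subdomains, and then sending $k\to\infty$ by dominated convergence using continuity of $u$ on $\bar\cO$.
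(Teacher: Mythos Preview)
Your proof is correct and follows essentially the same route as the paper: Feynman--Kac for the base case, iteration via the branching property to obtain $u(x)=\E[\psi^x_n]$ for all $n$, and then passage to the limit using almost sure extinction plus uniform integrability. The paper phrases the iteration by re-applying Feynman--Kac to each first-generation particle (conditionally on $\cF_0$) rather than writing your explicit recursion for $\psi^x_{n+1}$, but these are two packagings of the same computation; your remark on localizing the It\^o step near $\partial\cO$ is a detail the paper absorbs into a citation to Freidlin.
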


\begin{proof}
By applying It\^o's formula to $(e^{-\beta t}u(X^x_t))_{t\geq 0}$, we obtain the following Feynman-Kac representation:
\eqstar{
u(x) = \E\Bigl[e^{-\beta{\eta^x}} h(\bX^x_{\eta^x}) + \int^{\eta^x}_0  \beta e^{-\beta s} f(\cdot,u)(\bX^x_s) \,ds\Bigr].
}
See, \textit{e.g.}, Freidlin~\cite[Theorem 2.2.1]{freidlin1985pde} for a detailed proof. 
Then, we can write
\eqlnostar{eq:feynman-kac branching1}{
u(x) &= \E\left[ h(\bX^x_{\eta^x}) \ind_{\tau \geq {\eta^x}} + f(\cdot,u)(\bX^x_{\tau}) \ind_{\tau < {\eta^x}} \right]\\
&= \E\left[h(X^{x}_{\eta^x}) \ind_{\tau \geq {\eta^x}} + \frac{c_{I}(X^{x}_{\tau})}{p_{I}} u^{I}(X^{x}_{\tau}) \ind_{\tau < {\eta^x}} \right].
}
Since an empty product is equal to $1$ by convention, it follows that 
$u(x)=\E[\psi^x_0]$. Next, since each offspring has the same dynamic as the parent particle, we can repeat the above calculations to write for $k\in\cK^x_1,$
\eqlnostar{eq:iterate soln1}{
u(X^k_{T_{k^-}}) = \E\left[h(X^k_{T_k}) \ind_{X^k_{T_k}\notin \cO} + \frac{c_{I^k}(X^k_{T_k})}{p_{I^k}} u^{I^k}(X^k_{T_k}) \ind_{X^k_{T_k} \in \cO} \,\bigg|\, \cF_0 \right].
}
We use the result in \eqref{eq:iterate soln1} and plug it back in \eqref{eq:feynman-kac branching1} to obtain $u(x)=\E[\psi^x_1]$ by conditional independence of particles in $\cK^x_1$ given $\cF_0$.
Similarly, we can show by iteration that for any $n\in\N,$ we have $u(x)=\E[\psi^x_n]$.
To conclude, it remains to observe that $\psi_n^x$ converges to $\psi^x$ almost surely in view of Assumption~\ref{ass:extinction}. Thus, if we suppose that $(\psi^x_n)_{n\in\N}$ is uniformly integrable, as $n \to \infty,$ we get $u(x)=\E[\psi^x]$.
\end{proof}

Proposition~\ref{prop:semi-linearcase} shows that there is at most one solution to PDE~\eqref{eq:semi-linearpde} satisfying an appropriate integrability condition and such a solution admits a probabilistic representation using branching diffusion processes. It is not completely satisfactory since one needs to prove first the existence of a classical solution and even then the uniform integrability condition is hard to derive as illustrated by the example of Section~\ref{sec:cosh}. 
In order to overcome these limitations, we next establish a result which shows directly that the probabilistic representation provides a (viscosity) solution of PDE~\eqref{eq:semi-linearpde}. This is the main result of this section. It is stated under abstract assumptions for which we will provide explicit sufficient conditions in the subsequent sections.

\begin{theorem}
\label{thm:semi-linear}
 Suppose $u:x\mapsto \E[\psi^x]$ is well-defined and  continuous on $\bar{\cO}$. Then $u$ is a viscosity solution of PDE~\eqref{eq:semi-linearpde}.
\end{theorem}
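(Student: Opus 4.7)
The plan is to reduce the nonlinear problem to a linear Feynman--Kac representation with source $\beta f(\cdot,u)$ and then invoke a standard viscosity-solution argument based on It\^o's formula. Repeating the conditioning on $\cF_0$ used in the proof of Proposition~\ref{prop:semi-linearcase}, but with $u(x)=\E[\psi^x]$ itself (rather than an auxiliary classical solution) playing the role of the value function on each subtree, one obtains
\[
u(x) \;=\; \E\Bigl[h(X^x_{\eta^x})\,\ind_{\tau\ge \eta^x} + \tfrac{c_{I}(X^x_{\tau})}{p_{I}}\,u^{|I|}(X^x_{\tau})\,\ind_{\tau<\eta^x}\Bigr].
\]
Since $I$ is independent of $(W,\tau)$ and $f(x,y)=\sum_{l}c_l(x)y^l$, averaging over $I$ gives
\[
u(x) \;=\; \E\bigl[h(X^x_{\eta^x})\ind_{\tau\ge \eta^x} + f(X^x_{\tau},u(X^x_{\tau}))\,\ind_{\tau<\eta^x}\bigr].
\]
Integrating out $\tau\sim\mathrm{Exp}(\beta)$, which is independent of $W$ (hence of $\eta^x$ and of the path $X^x$), then yields the Feynman--Kac identity
\[
u(x) \;=\; \E\Bigl[e^{-\beta\eta^x}h(X^x_{\eta^x}) + \int_0^{\eta^x}\beta e^{-\beta s}f\bigl(X^x_s,u(X^x_s)\bigr)\,ds\Bigr]. \qquad(\star)
\]

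\textbf{Dynamic programming.} From $(\star)$ together with the strong Markov property of $X^x$ I would next establish the DPP: for every stopping time $\theta$,
\[
u(x) \;=\; \E\Bigl[e^{-\beta(\theta\wedge\eta^x)}u(X^x_{\theta\wedge\eta^x}) + \int_0^{\theta\wedge\eta^x}\beta e^{-\beta s}f\bigl(X^x_s,u(X^x_s)\bigr)\,ds\Bigr].
\]
On $\{\theta\ge\eta^x\}$ the first term reads $e^{-\beta\eta^x}h(X^x_{\eta^x})$, because $u=h$ on $\del\cO$ (forced by $\eta^x=0$ a.s.\ when $x\in\del\cO$, so $\psi^x=h(x)$ a.s.); on $\{\theta<\eta^x\}$, applying $(\star)$ to the restarted diffusion from $X^x_\theta$ -- which, by strong Markov, is an independent copy of $X^{X^x_\theta}$ -- and multiplying by $e^{-\beta\theta}$ produces the remaining contribution.

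\textbf{Viscosity inequality and main obstacle.} The DPP is the stochastic representation associated to the \emph{linear} elliptic equation $\cL v - \beta v + \beta f(\cdot,u) = 0$ with continuous source (continuity of $f(\cdot,u)$ follows from Assumption~\ref{ass:pde} and the standing hypothesis on $u$). A standard argument then concludes: given $\varphi\in\cC^2$ such that $u-\varphi$ has a local maximum equal to $0$ at some $x_0\in\cO$, take $\theta$ to be the exit time of $X^{x_0}$ from a small ball $B_r(x_0)\subset\cO$, apply It\^o's formula to $e^{-\beta s}\varphi(X^{x_0}_s)$, use $u\le\varphi$ on $B_r(x_0)$ in the DPP, divide by $\E[\theta]$ and let $r\downarrow 0$; mean-value continuity at $x_0$ yields
\[
\cL\varphi(x_0) + \beta\bigl(f(x_0,\varphi(x_0)) - \varphi(x_0)\bigr) \;\ge\; 0,
\]
the subsolution inequality, and the supersolution inequality is symmetric. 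The main obstacle is the rigorous transition from $(\star)$ to the DPP for an arbitrary stopping time $\theta$: one must handle carefully the measurability of $\eta^x-\theta$ on $\{\theta<\eta^x\}$ and invoke the strong Markov property to rewrite the tail integral $\int_\theta^{\eta^x}$ in terms of a freshly restarted diffusion, while using the boundary identity $u=h$ on $\del\cO$ to absorb the event $\{\theta\ge\eta^x\}$. Once the DPP is in hand, everything downstream is routine.
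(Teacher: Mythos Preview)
Your proposal is correct and follows essentially the same route as the paper: first use the branching property to show that $u(x)=\E[\psi^x]$ satisfies the Feynman--Kac identity $(\star)$, then pass to a dynamic programming principle and conclude by the standard It\^o/test-function argument. The only differences are cosmetic: the paper derives the DPP for a deterministic time $\delta>0$ (so a global touching test function suffices) and sends $\delta\downarrow 0$, whereas you use an arbitrary stopping time and the exit time from a small ball (which accommodates a local max); both are standard variants of the same mechanism.
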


\begin{proof}
We first observe that by definition of $u$, it holds
\eqstar{
u(x) 
 = \E\left[h(X^{x}_{\eta^x}) \ind_{\tau \geq {\eta^x}} + \frac{c_{I}(X^{x}_{\tau})}{p_{I}} \prod_{i=0}^{I-1} \psi^{X^{x}_{\tau}}_i \ind_{\tau < {\eta^x}} \right].
}
where 
\eqstar{
  \psi^{X^{x}_{\tau}}_i := \prod_{\substack{{k=(i,\ldots)\in\cK^x}\\{X^k_{T_k}\notin \cO}}} h(X^k_{T_k}) \prod_{\substack{{k=(i,\ldots)\in\cK^x}\\{X^k_{T_k}\in \cO}}} \frac{c_{I^k}(X^k_{T_k})}{p_{I^k}}.
}
Furthermore, the branching property, which says that each offspring starts the same but an independent branching diffusion as the parent particle, yields that conditioned on $X^{x}_{\tau}$ and $I$, $(\psi^{X^{x}_{\tau}}_0,\ldots, \psi^{X^{x}_{\tau}}_{I-1})$ are independent random variables identical in law to $\psi^{X_0}$ where $X_0$ is distributed as $X^{x}_{\tau}$ and independent of $\cF_n$ for all $n\in\N$. Through this argument, we deduce that
\eqstar{
  \E\Big[\prod_{i=0}^{I-1} \psi^{X^{x}_{\tau}}_i \,\Big|\, X^{x}_{\tau}, I  \Big] \ind_{\tau<\eta^x} = u^{I}\left(X^{x}_{\tau}\right) \ind_{\tau<\eta^x}.
}
Working backward along the lines of the proof of Proposition~\ref{prop:semi-linearcase}, we deduce that 
\eqlnostar{eq:representation}{
u(x) = \E\Bigl[e^{-\beta{\eta^x}} h(\bX^x_{\eta^x}) + \int^{\eta^x}_0 \beta e^{-\beta s} f(\cdot,u)(\bX^x_s) \,ds\Bigr].
}
Next, for any $\delta>0,$ it follows from Markov property that
\eqlnostar{eq:viscosity}{
u(x) = \E\Bigl[e^{-\beta({\eta^x}\wedge \delta)} u(\bX^x_{{\eta^x}\wedge \delta}) + \int^{{\eta^x}\wedge \delta}_0 \beta e^{-\beta s} f(\cdot,u)(\bX^x_s) \,ds\Bigr].
}
The fact that $u$ is a viscosity solution of PDE~\eqref{eq:semi-linearpde} now follows from classical arguments. 
For the sake of completeness, let us show that $u$ is a viscosity subsolution. Consider $\varphi\in\cC^{2}_b(\cO)$ such that $x\in\cO$ is a maximum point of $u-\varphi$ and $u(x)=\varphi(x)$. 
First, we observe by applying It\^o's formula that 
\eqstar{
\E\left[e^{-\beta({\eta^x}\wedge \delta)} \varphi(\bX^x_{{\eta^x}\wedge \delta})\right] = \varphi(x) +
 \E\left[\int^{{\eta^x}\wedge \delta}_0  {e^{-\beta s} \left(\cL\varphi  - \beta\varphi \right)(\bX^x_s) \,ds}\right].
}
Since $u(x)=\varphi(x)$ and $u\leq \varphi$ otherwise, we deduce by using~\eqref{eq:viscosity} that 
\eqstar{
\E\left[\frac{1}{\delta}\int^{{\eta^x}\wedge \delta}_0 e^{-\beta s} \left(\cL\varphi - \beta \varphi + \beta f(\cdot,u)\right)(\bX^x_s) \,ds\right]\geq 0.
}
Since $y\mapsto f(y,u(y))$ is continuous and bounded, it follows from the mean value theorem and the dominated convergence theorem that 
\begin{equation*}
  \cL \varphi(x) + \beta\left(f(x,\varphi(x)) - \varphi(x)\right) \geq 0.
\end{equation*}
Thus $u$ is a viscosity subsolution of PDE~\eqref{eq:semi-linearpde}. The fact that $u$ is a viscosity supersolution results from similar arguments.

\end{proof}

\begin{remark}
 It will turn out from the study of the next sections that the explicit conditions we provide to ensure that $x\mapsto \E[\psi^x]$ is a solution to PDE~\eqref{eq:semi-linearpde} also entail that $x\mapsto \E[|\psi^x|]$ is a solution to
\eqstar{
\cL v + \beta\left(\sum_{l\in L} \left| c_l\right| v^{l}-v\right) = 0 ~~ \text{in } \cO, \quad~ 
v = \left| h\right| ~~ \text{on }\del \cO.
}
In particular, we observe that the monotonicity assumption is not relevant in our setting. 
The limiting assumption in our approach is the integrability condition on $(\psi^x)_{x\in \cO}$ which ensures that $u:x\mapsto\E[\psi^x]$ is well-defined. See Section~\ref{sec:integrability} for more details.
\end{remark}

\begin{remark}
We can also work with a general lifetime distribution $\rho$ and extend the arguments above, as done in Section~\ref{sec:quasilinear}, to derive another probabilistic representation which is given as
\eqstar{ 
 u(x) = \E\left[\prod_{\substack{{k \in \cK^x}\\{X^k_{T_k}\notin \cO}}} \frac{e^{-\beta\Delta T_k} h(X^k_{T_k})}{\bar{F}(\Delta T_k)} \prod_{\substack{{k \in \cK^x}\\{X^k_{T_k}\in \cO}}} \frac{\beta e^{-\beta\Delta T_k} c_{I^k}(X^k_{T_k})}{p_{I^k} \rho(\Delta T_k)}\right],
}
where $\Delta T_k := T_k - T_{k^-}$ is the lifetime of particle $k$ and $\bar{F}(t):=\int_t^{\infty}{\rho(s)\,ds}$, $t\geq 0$.
However, this leads to more stringent assumptions when studying the integrability condition as in Section~\ref{sec:integrability}.
\end{remark}

\subsection{Continuity Assumption}
\label{sec:explicitsemi-linear}
\label{sec:continuity}

Let us give explicit sufficient conditions for the continuity assumption in Theorem~\ref{thm:semi-linear} to hold. We use two different approaches leading to slightly different conditions. Both approaches rely on appropriate integrability conditions on $(\psi^x)_{x\in\cO}$ which we discuss in the next section.

\subsubsection{PDE Approach}

\begin{assumption}\label{ass:continuity_sufficient}
\emph{(i)} The diffusion coefficient $\sigma$ is uniformly elliptic.\footnote{There exists $\lambda>0$ such that $\sigma\sigma^*(x)\geq \lambda I_d$ for all $x\in\bar{\cO}$.}\\
\emph{(ii)}The boundary $\partial\cO$ is of class $\cC^{1,\alpha}$.
\end{assumption}

\begin{proposition}
\label{prop:continuity}
Suppose Assumption~\ref{ass:continuity_sufficient} holds and $(\psi^x)_{x\in\cO}$ is uniformly bounded in $L^1$, then the map $u:x\mapsto \E[\psi^x]$ is continuous.
\end{proposition}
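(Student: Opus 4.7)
The plan is to freeze the non-linearity and thereby recast $u$ as the probabilistic representation of a \emph{linear} Dirichlet problem with bounded measurable inhomogeneity, then invoke classical regularity for linear elliptic PDE. First I would use the $L^1$-uniformity hypothesis to set $M:=\sup_{x\in\cO}\E[|\psi^x|]<\infty$, whence $\|u\|_\infty\leq M$. By Assumption~\ref{ass:pde}(iii) the power series $y\mapsto\sum_{l\in L}c_l(x)y^l$ has infinite radius of convergence for every $x\in\bar\cO$, so $(x,y)\mapsto f(x,y)$ is continuous and hence bounded on the compactum $\bar\cO\x[-M,M]$; in particular $g:x\mapsto f(x,u(x))$ is a bounded Borel function on $\bar\cO$.

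Next I would repeat the opening computation of the proof of Theorem~\ref{thm:semi-linear} --- which uses only the definition $u(x)=\E[\psi^x]$, the branching property, and the first-generation conditional independence of the sub-trees --- to obtain the linear Feynman--Kac representation
\[
u(x)\;=\;\E\!\left[e^{-\beta\eta^x}h(X^x_{\eta^x})+\int_0^{\eta^x}\beta e^{-\beta s}\,g(X^x_s)\,ds\right].
\]
The interchange of expectation and product of the $\psi^{X^x_\tau}_i$'s that this requires is justified by their conditional independence together with the uniform bound $\E[|\psi^y|]\leq M$ and the convergence of $\sum_{l\in L}|c_l(x)|M^l$ (infinite radius of convergence once again). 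Crucially, this step needs only \emph{boundedness} of $u$, not continuity.

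Finally I would compare $u$ with the unique solution $v$ of the \emph{linear} Dirichlet problem $\cL v-\beta v+\beta g=0$ in $\cO$, $v=h$ on $\del\cO$. Under Assumption~\ref{ass:continuity_sufficient}, uniform ellipticity combined with $\cC^{1,\alpha}$ regularity of $\del\cO$ and continuity of $h$ yields, by classical $L^p$-theory for linear elliptic operators with bounded measurable right-hand side (see, e.g., Gilbarg--Trudinger, Ch.~9), a unique strong solution $v\in W^{2,p}_{\mathrm{loc}}(\cO)\cap\cC(\bar\cO)$ for all $p<\infty$. Applying It\^o--Krylov's formula to $(e^{-\beta t}v(X^x_t))_t$ shows that $v$ satisfies the same representation derived in the previous step, so $u=v\in\cC(\bar\cO)$. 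The main obstacle is precisely this last step: interior continuity follows quite robustly from uniform ellipticity and $L^p$-estimates, but attainment of the boundary datum $h$ --- equivalently continuity of $x\mapsto(\eta^x,X^x_{\eta^x})$ at points of $\del\cO$ --- is the delicate point and is precisely why Assumption~\ref{ass:continuity_sufficient}(ii) on the boundary regularity is required.
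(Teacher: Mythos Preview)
Your proposal is correct and shares the paper's core strategy: first derive the linear Feynman--Kac representation
\[
u(x)=\E\Bigl[e^{-\beta\eta^x}h(X^x_{\eta^x})+\int_0^{\eta^x}\beta e^{-\beta s}f(\cdot,u)(X^x_s)\,ds\Bigr]
\]
from the branching property (exactly as in the proof of Theorem~\ref{thm:semi-linear}, using only boundedness of $u$), and then use linear elliptic theory to conclude continuity.

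The execution of the second step differs. The paper isolates the two terms and proves continuity of each separately in Lemma~\ref{lem:continuity}: the boundary term $x\mapsto\E[e^{-\beta\eta^x}h(X^x_{\eta^x})]$ is identified with the classical solution of $\cL\varphi-\beta\varphi=0$, $\varphi|_{\partial\cO}=h$ via Schauder theory, while the integral term is handled through parabolic Green-function estimates for $x\mapsto\E[g(X^x_s)\ind_{s<\eta^x}]$ followed by dominated convergence in $s$. You instead treat the full right-hand side as one linear Dirichlet problem with bounded measurable inhomogeneity and invoke $L^p$/strong-solution theory together with It\^o--Krylov. Your route is more compact and avoids the parabolic machinery, at the cost of relying on global $W^{2,p}$ estimates up to the boundary (which in the standard references, e.g.\ Gilbarg--Trudinger Theorem~9.15, are stated for $\partial\cO\in\cC^{1,1}$ rather than $\cC^{1,\alpha}$); the paper's split avoids that particular technicality by putting the source term into the parabolic Green-function argument, where interior estimates suffice. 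Either way the essential idea is the same.
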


The proof follows immediately from~\eqref{eq:representation} and the following lemma.

\begin{lemma}\label{lem:continuity}
 Suppose Assumption~\ref{ass:continuity_sufficient} holds, then the following statements are satisfied:\\
 \emph{(i)} The map $\bar{\cO}\ni x\mapsto\E[e^{-\beta{\eta^x}} h(\bX^x_{\eta^x})]$ is continuous.\\
 \emph{(ii)} For any $g:\cO\to\R$ bounded measurable, the map $\bar{\cO}\ni x\mapsto\E[\int_0^{\eta^x}{e^{-\beta s}g(\bX^x_s)\,ds}]$ is continuous.
\end{lemma}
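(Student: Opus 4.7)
The plan is to combine interior stability of the diffusion flow with a boundary barrier argument, treating both quantities as Feynman--Kac expressions for the linear operator $\cL-\beta$. Write $\phi_1(x):=\E[e^{-\beta\eta^x} h(\bX^x_{\eta^x})]$ and $\phi_2(x):=\E[\int_0^{\eta^x} e^{-\beta s} g(\bX^x_s)\,ds]$; formally $\phi_1$ is the solution of $\cL v - \beta v = 0$ in $\cO$ with $v = h$ on $\del\cO$, and $\phi_2$ is the solution of $\cL v - \beta v + g = 0$ in $\cO$ with $v = 0$ on $\del\cO$.

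For continuity at an interior point $x_0 \in \cO$, I would fix $r>0$ with $\bar{B}(x_0,r) \subset \cO$ and let $\tau_r^x$ denote the first exit of $\bX^x$ from $B(x_0,r)$. The strong Markov property gives
\begin{equation*}
\phi_1(x) = \E\bigl[e^{-\beta \tau_r^x} \phi_1(\bX^x_{\tau_r^x})\bigr], \quad
\phi_2(x) = \E\Bigl[\int_0^{\tau_r^x} e^{-\beta s} g(\bX^x_s)\,ds + e^{-\beta \tau_r^x} \phi_2(\bX^x_{\tau_r^x})\Bigr].
\end{equation*}
Assumption~\ref{ass:bdp}(iii) yields the standard flow estimate $\sup_{s\leq T}|\bX^x_s-\bX^{x_0}_s| \to 0$ in $L^2$ as $x\to x_0$, and uniform ellipticity (Assumption~\ref{ass:continuity_sufficient}(i)) forces $\bX^{x_0}$ to cross $\del B(x_0,r)$ transversally, hence $(\tau_r^x,\bX^x_{\tau_r^x})\to (\tau_r^{x_0}, \bX^{x_0}_{\tau_r^{x_0}})$ in probability. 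Since $\phi_1,\phi_2$ are uniformly bounded (by $\|h\|_\infty$ and $\|g\|_\infty/\beta$), dominated convergence then propagates continuity from $\del B(x_0,r)$ into the ball.

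For $x_0\in\del\cO$, Assumption~\ref{ass:continuity_sufficient}(ii) provides an exterior sphere at $x_0$. Combined with uniform ellipticity, the classical barrier construction (see, \emph{e.g.}, Freidlin~\cite{freidlin1985pde}) yields $w\in\cC^2(\bar{\cO})$ on some neighborhood $U$ of $x_0$ with $w(x_0)=0$, $w>0$ elsewhere on $U\cap\bar{\cO}$, and $\cL w\leq -1$ on $U\cap\cO$. It\^o's formula applied to $w(\bX^x_\cdot)$ and stopped at $\eta^x\wedge\tau_U^x$ delivers $\E[\eta^x\wedge\tau_U^x]\leq w(x)\to 0$ as $x\to x_0$; since $\tau_U^x$ is bounded away from $0$ in probability for $x$ close to $x_0$, this forces $\eta^x\to 0$ in probability and hence $\bX^x_{\eta^x}\to x_0$ in probability by path continuity. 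Combined with continuity of $h$ on $\del\cO$ (Assumption~\ref{ass:pde}(ii)) and boundedness of $g$, dominated convergence delivers $\phi_1(x)\to h(x_0)$ and $\phi_2(x)\to 0$.

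The main obstacle I anticipate lies in part (ii), since $g$ is only bounded measurable and the integrand $e^{-\beta s} g(\bX^x_s)$ need not converge pathwise along $\bX^x\to\bX^{x_0}$. The cleanest remedy is to work at the level of the resolvent $R_\beta$ of the diffusion killed at $\eta^x$; under Assumption~\ref{ass:continuity_sufficient}, interior Schauder-type estimates for the uniformly elliptic operator $\cL - \beta$ together with the barrier bound above show that $R_\beta$ sends $L^\infty(\cO)$ into $\cC(\bar{\cO})$, which is exactly the continuity required. Alternatively, one can approximate $g$ uniformly on compacts of $\cO$ by continuous functions and use the operator bound $\|R_\beta g\|_\infty \leq \|g\|_\infty/\beta$ (a direct consequence of Jensen's inequality) to pass to the limit from the continuous case, where Step~2 applies directly.
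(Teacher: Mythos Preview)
Your boundary argument via barriers is sound: the exterior sphere condition (available from the $\cC^{1,\alpha}$ boundary) together with uniform ellipticity does produce the barrier $w$, and the conclusion $\eta^x\to 0$ in probability as $x\to x_0\in\partial\cO$, hence $\phi_1(x)\to h(x_0)$ and $\phi_2(x)\to 0$, is correct.

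The interior step, however, is circular as written. The strong Markov representation
\[
\phi_1(x)=\E\bigl[e^{-\beta\tau_r^x}\,\phi_1(\bX^x_{\tau_r^x})\bigr]
\]
expresses $\phi_1$ at $x$ through its own values on $\partial B(x_0,r)\subset\cO$. To pass to the limit by dominated convergence from $(\tau_r^x,\bX^x_{\tau_r^x})\to(\tau_r^{x_0},\bX^{x_0}_{\tau_r^{x_0}})$ you need $\phi_1$ to be continuous at the random point $\bX^{x_0}_{\tau_r^{x_0}}$, which is precisely the property you are trying to establish; the boundary step only gives continuity on $\partial\cO$, not on the interior sphere $\partial B(x_0,r)$. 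To close this you would either have to invoke a strong Feller property for the exit distribution on balls (so that $x\mapsto\E[f(\bX^x_{\tau_r^x})]$ is continuous for \emph{bounded measurable} $f$), which is itself a PDE input, or drop the localization and argue directly that $x\mapsto(\eta^x,\bX^x_{\eta^x})$ is a.s.\ continuous and then use continuity of $h$ on $\partial\cO$. The paper takes the latter route separately (Proposition~\ref{prop:continuity_proba} and Lemma~\ref{lem:continuity_proba}) under the slightly weaker Assumption~\ref{ass:continuity_sufficient_proba}.

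For the present lemma the paper proceeds purely by PDE: for (i) it invokes Gilbarg--Trudinger to obtain a classical solution $\varphi\in\cC^2(\cO)\cap\cC(\bar\cO)$ of $\cL\varphi-\beta\varphi=0$ with $\varphi=h$ on $\partial\cO$, and identifies $\varphi=\phi_1$ by It\^o's formula; for (ii) it writes $\E[g(\bX^x_s)\ind_{s<\eta^x}]=\int_\cO G(s,x;0,y)\,g(y)\,dy$ with $G$ the Green function of the associated parabolic problem, and concludes by the Gaussian bound on $G$ and dominated convergence. Your first proposed remedy for (ii) --- that Schauder-type estimates send $R_\beta:L^\infty(\cO)\to\cC(\bar\cO)$ --- is at the same level of PDE input and is fine. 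Your second alternative is not: approximating $g$ by continuous functions \emph{uniformly on compacts} does not combine with the bound $\|R_\beta g\|_\infty\le\|g\|_\infty/\beta$, since that bound controls only perturbations in the global sup norm; making the remainder small still forces you through a density estimate for the killed process, i.e.\ back to the Green function.
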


\begin{proof}
(i) Let us first study the continuity of $x\mapsto\E[e^{-\beta{\eta^x}} h(\bX^x_{\eta^x})]$. Under the assumptions above, it is well-known that there exists a smooth solution $\varphi\in\cC^2(\cO)\cap\cC(\bar{\cO})$ to the following PDE:
\eqlnostar{eq:linearpde}{
\cL \varphi - \beta \varphi = 0 ~~ \text{in } \cO, \quad~ \varphi = h ~~ \text{on } \del\cO.
}
See, \textit{e.g.}, Gilbarg and Trudinger~\cite[Theorem 6.13]{gilbarg2015elliptic}. By It\^o's formula, we deduce that $\varphi(x)=\E[e^{-\beta{\eta^x}}$ $h(\bX^x_{\eta^x})]$ and the conclusion follows. 

\noindent (ii)Let us now turn to the continuity of  $x\mapsto\E[\int_0^{\eta^x}{e^{-\beta s} g(\bX^x_s) \,ds}]$. We start by showing that $x\mapsto\E[g(\bX^x_s)\ind_{s<{\eta^x}}]$ is continuous for $s>0$. If $g$ is continuous and $g=0$ on $\del\cO$, it is known that the unique smooth solution $\chi\in \cC^{1,2}((0,T]\times \cO)\cap \cC([0,T]\times \bar{\cO})$ of 
\eqlnostar{eq:parabolic}{
\partial_t \chi - \cL \chi &=0,\quad~ \text{in } (0,T]\times\cO, \nonumber\\
\chi(0,\cdot) &= g,\quad~ \text{on } \cO, \nonumber\\
\chi &= 0, \quad~ \text{on } (0,T]\times \del\cO,
}
is of the form
\eqstar{
\chi(s,x) = \int_{\cO} {G(s,x;0,y) g(y) \,dy}, \quad 0 < s \leq T,
}
where $G$ is the so-called Green function of PDE~\eqref{eq:parabolic} (see, \textit{e.g.}, Lady\v{z}enskaja \textit{et al.}~\cite[Theorem 4.16.2]{ladyzenskaja1968parabolic}). Then, it follows from It\^o's formula that 
\eqstar{
 \E[g(\bX^{x}_s)\ind_{s<{\eta^x}}] = \int_{\cO} {G(s,x;0,y) g(y) \,dy}.
}
In particular, $y\mapsto G(s,x;0,y)$ appears as the density of $\bX^{x}_s$ on the event $\{s< {\eta^x}\}$, and so the identity above remains valid for any $g$ bounded measurable. Furthermore, $x\mapsto G(s,x;0,y)$ is continuous and satisfies
\eqstar{
\left|G(s,x;0,y)\right| \leq C s^{-\frac{d}{2}} e^{-C\frac{\left|x-y\right|^2}{s}},
}
for some constant $C>0$ depending on $T$, see~\cite[Equation 4.16.16]{ladyzenskaja1968parabolic}. Hence the desired result follows from the dominated convergence theorem. To conclude, it remains to observe that 
\eqstar{
\E\left[\int_0^{\eta^x}{e^{-\beta s} g\left(\bX^x_s\right) \,ds}\right] = \int_0^{+\infty}{e^{-\beta s} \E\left[g\left(\bX^x_s\right)\mathbf{1}_{s<\eta^x}\right] \,ds}
}
and to apply once again the dominated convergence theorem.

\end{proof}

\subsubsection{Probabilistic Approach}

\begin{assumption}\label{ass:continuity_sufficient_proba}
\emph{(i)}The diffusion coefficient $\sigma$ is uniformly elliptic on $\del\cO$.\footnote{There exists $\lambda>0$ such that $\sigma\sigma^*(x)\geq \lambda I_d$ for all $x\in\del\cO$.}\\
\emph{(ii)}The boundary $\partial\cO$ satisfies an exterior cone condition.\footnote{See, \textit{e.g.}, Gilbarg and Trudinger~\cite[Problem 2.12]{gilbarg2015elliptic} for a definition.}\\
\emph{(iii)}The stopping time $\eta^x$ is finite almost surely.
\end{assumption}

Sufficient conditions for Part (iii) to hold are provided in Freidlin~\cite[Lemma 3.3.1]{freidlin1985pde}. For instance, it suffices to assume that there exists $1\leq i\leq d$ such that $\sum_{j=1}^d \sigma_{ij}^2(x)>0$ or $|\mu_i(x)|>0$ for all $x\in\cO$.

\begin{proposition}
\label{prop:continuity_proba}
Suppose Assumption~\ref{ass:continuity_sufficient_proba} holds and $(\psi^x)_{x\in\cO}$ is uniformly integrable, then $u:x\mapsto \E[\psi^x]$ is continuous.
\end{proposition}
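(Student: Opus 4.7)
The plan is to prove continuity by a coupling and convergence-in-probability argument, then upgrade to $L^1$-convergence using uniform integrability, rather than by solving an auxiliary PDE as in Proposition~\ref{prop:continuity}. I would realize $\psi^y$ for every $y\in\bar{\cO}$ on the common probability space from Section~\ref{section:branching diffusion recap}, using the same driving data $(\tau^k, I^k, W^k)_{k\in\K}$. With this coupling in place, for a sequence $x_n\to x$ in $\bar{\cO}$, I would first show $\psi^{x_n}\to\psi^x$ in probability, then invoke uniform integrability of $(\psi^y)_{y\in\cO}$ to conclude $\E[\psi^{x_n}]\to\E[\psi^x]$.

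The first ingredient is continuity of the diffusion flow in the initial condition, which follows from Assumption~\ref{ass:bdp}(iii) and standard Gr\"onwall estimates: $X^{x_n}_\cdot\to X^x_\cdot$ almost surely uniformly on compacts, and this propagates to every descendant particle because the driving noise is shared. The second and more delicate ingredient is continuity of the exit time $y\mapsto\eta^y$. Here I would use that under Assumption~\ref{ass:continuity_sufficient_proba}(i)--(ii), every point of $\del\cO$ is regular for the diffusion: the exterior cone condition is a classical sufficient condition for regularity of Brownian paths (Wiener-type criterion), and uniform ellipticity on $\del\cO$ transfers this regularity to the diffusion with coefficients $(\mu,\sigma)$. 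Combined with Assumption~\ref{ass:continuity_sufficient_proba}(iii) and the strong Markov property, regularity yields $\eta^{x_n}\to\eta^x$ almost surely when $x_n\to x$ with $x\in\cO$, and $\eta^{x_n}\to 0$ in probability when $x_n\to x\in\del\cO$.

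The bulk of the proof then consists in transporting these two continuity statements through the random genealogy. Since $\tau$ has the strictly positive density $\rho$ (Assumption~\ref{ass:bdp}(ii)), almost surely $\tau\neq\eta^x$, so for $n$ large the orderings $\tau<\eta^{x_n}$ and $\tau<\eta^x$ agree, and the ancestor particle either branches or exits the domain in both scenarios simultaneously. When it branches, the children start from $X^{x_n}_\tau\to X^x_\tau$, and the argument is iterated generation by generation. Because $\cK^x$ is almost surely finite by Assumption~\ref{ass:extinction}, the induction terminates, and continuity of $h$ on $\del\cO$ and of $c_l$ on $\bar{\cO}$ (Assumption~\ref{ass:pde}) together with $p_l>0$ (Assumption~\ref{ass:bdp}(i)) yields $\psi^{x_n}\to\psi^x$ in probability. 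For $x\in\del\cO$, regularity forces $\eta^x=0$ almost surely, hence $\P(\tau<\eta^{x_n})\to 0$; with probability approaching one the first particle exits without branching and $\psi^{x_n}=h(\bX^{x_n}_{\eta^{x_n}})\to h(x)=\psi^x$.

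The main obstacle I anticipate is precisely this propagation through the random tree: without a smooth PDE solution to rely on, one must control the jump discontinuity that can appear when the ordering of $\tau^k$ and the exit time of particle $k$ flips as $x_n\to x$. The almost-sure strict inequality between $\tau^k$ and the exit time, which rests on the density assumption, together with the almost-sure finiteness of $\cK^x$ from the extinction hypothesis, are what make the inductive argument terminate and preserve the tree structure in the limit. Once convergence in probability of $\psi^{x_n}$ is secured, uniform integrability supplies the final step, giving $u(x_n)\to u(x)$.
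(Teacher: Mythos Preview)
Your proposal is correct and follows essentially the same route as the paper: both couple all $\psi^y$ on the common probability space of Section~\ref{section:branching diffusion recap}, invoke regularity of boundary points under Assumption~\ref{ass:continuity_sufficient_proba} to obtain almost-sure continuity of $y\mapsto\eta^y$ (this is Lemma~\ref{lem:continuity_proba}), propagate this through the genealogy generation by generation using that $\P(\tau^k=\text{exit time})=0$, and conclude via uniform integrability. The one technical device the paper adds is to concatenate the driving Brownian motions along each lineage, so that particle $k$'s trajectory is written as a single SDE $X^{k,x}$ started from the deterministic point $x$; this lets the exit-time continuity lemma be applied directly at every node without conditioning, but your inductive formulation achieves the same end.
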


\begin{proof}
Clearly it suffices to prove that $\bar{\cO}\ni x\mapsto \psi^x$ is almost surely continuous, in the sense that for all sequence $(x_n)_{n\in\N}$ converging to $x$, it holds
\eqstar{
 \P\left(\lim_{n\to\infty} \psi^{x_n} = \psi^x \right)=1.
}
This essentially follows from the almost sure continuity of $x\mapsto\eta^x$ stated in Lemma~\ref{lem:continuity_proba} below.

\noindent(i) Let us first show that the contribution of the first particle to $\psi^x$ is almost surely continuous, \textit{i.e.}, 
\eqlnostar{eq:first}{
 \lim_{n\to\infty} {h(X^{x_n}_{\eta^{x_n}}) \ind_{\tau \geq {\eta^{x_n}}}} &= h(X^x_{\eta^x}) \ind_{\tau \geq {\eta^x}}, \quad \P-\text{a.s.},\\
  \lim_{n\to\infty} {\frac{c_I(X^{x_n}_{\tau})}{p_I} \ind_{\tau < {\eta^{x_n}}}} &= \frac{c_I(X^x_{\tau})}{p_I}\ind_{\tau < {\eta^x}}, \quad\P-\text{a.s.}\label{eq:second}
}
To achieve this, let us consider the set
\eqstar{
  \Omega^{\emptyset} := \left\{\eta^{x} \neq \tau\right\}\cap \left\{ \lim_{n\to\infty} \eta^{x_n} = \eta^x\right\}\cap \left\{ \lim_{n\to\infty} X^{x_n} = X^x\right\}.
 }
In view of Lemma~\ref{lem:continuity_proba} below, it is clear that $\P(\Omega^{\emptyset})=1$. Furthermore, one easily checks that, for every $\omega\in\Omega^{\emptyset}$, both~\eqref{eq:first}--\eqref{eq:second} hold.

\noindent(ii) Let us show next that the contribution of the particles of the first generation to $\psi^x$ is almost surely continuous. For every $i\in\N$, we denote by $(X^{i,x}_s)_{s\geq 0}$ the unique solution of 
\eqstar{
X^{i,x}_t = x + \int^t_{0} \mu(X^{i,x}_s) \,ds + \int^t_{0} \sigma(X^{i,x}_s) \,dB^i_s, \quad \P-\text{a.s.},
}
where the Brownian motion $B^i$ is defined by 
\eqstar{
B^i_t := W_{t\wedge\tau} + W^i_{t-\tau} \ind_{t
\geq\tau}.
}
Clearly,  $(X^{i,x}_s)_{s\geq 0}$ coincides with the trajectory $X^i$ of particle $i$ during its lifetime. Let $\eta^{i,x}$ be the first exit time of $X^{i,x}$ from $\cO$ and consider
\eqstar{
  \Omega^{i} := \left\{\eta^{i,x} \neq \bar{\tau}^i\right\}\cap \left\{ \lim_{n\to\infty} \eta^{i,x_n} = \eta^{i,x}\right\}\cap \left\{ \lim_{n\to\infty} X^{i,x_n} = X^{i,x}\right\}.
 }
 where $\bar{\tau}^i:=\tau+\tau^i$. Once again, $\P(\Omega^i)=1$ and
\eqstar{
 \lim_{n\to\infty} {h\left(X^{i,x_n}_{\eta^{i,x_n}}\right) \ind_{\bar{\tau}^i \geq \eta^{i,x_n}}} &= h\left(X^{i,x}_{\eta^{i,x}}\right) \ind_{\bar{\tau}^i \geq {\eta^{i,x}}}, \quad\text{on }\Omega^i, \\
 \lim_{n\to\infty} {\frac{c_{I^i}\left(X^{i,x_n}_{\bar{\tau}^i}\right)}{p_{I^i}} \ind_{\bar{\tau}^i < {\eta^{i,x_n}}}} &= \frac{c_{I^i}\left(X^{i,x}_{\bar{\tau}^i}\right)}{p_{I^i}}\ind_{\bar{\tau}^i < \eta^{i,x}}, \quad\text{on }\Omega^i.
}
Thus, we conclude that on the set $\Omega^{\emptyset}\cap\bigcap_{i\in\N}{\Omega^i}$ of probability $1$, the contributions of the particles of generations $0$ and $1$ to $\psi^x$ is almost surely continuous.

\noindent (iii) The desired result follows by iteration. We denote for all $n\geq 2$, $k\in\N^n$,
\eqstar{
  \Omega^{k} := \left\{\eta^{k,x} \neq \bar{\tau}^k\right\}\cap \left\{ \lim_{n\to\infty} \eta^{k,x_n} = \eta^{k,x}\right\}\cap \left\{ \lim_{n\to\infty} X^{k,x_n} = X^{k,x}\right\},
}
where $\bar{\tau}^k:=\bar{\tau}^{k^-}+\tau$ and $(X^{k,x}_s)_{s\geq 0}$ is the unique solution of 
\eqstar{
X^{k,x}_t = x + \int^t_{0} \mu(X^{k,x}_s) \,ds + \int^t_{0} \sigma(X^{k,x}_s) \,dB^k_s, \quad \P-\text{a.s.},
}
with
\eqstar{
B^k_t := B^{k^-}_{t\wedge\bar{\tau}^{k^-}} +  W^k_{t - \bar{\tau}^{k^-}} \ind_{t\geq\bar{\tau}^{k^-}}.
}
Then on the set $\bigcap_{k\in\K}\Omega^{k}$ of probability $1$,  it holds that $\lim_{n\to\infty} \psi^{x_n}=\psi^x$.
\end{proof}

\begin{lemma}\label{lem:continuity_proba}
 Suppose Assumption~\ref{ass:continuity_sufficient_proba} holds, then the map $\bar{\cO}\ni x\mapsto \eta^x$ is almost surely continuous, in the sense that for all sequence $(x_n)_{n\in\N}$ converging to $x$, it holds
 \eqstar{
  \P\left(\lim_{n\to\infty} \eta^{x_n} = \eta^x \right)=1.
 }
\end{lemma}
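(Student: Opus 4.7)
The plan is to establish separately the two semicontinuity inequalities $\liminf_n \eta^{x_n} \geq \eta^x$ and $\limsup_n \eta^{x_n} \leq \eta^x$ almost surely. Both arguments rely on the pathwise continuity of the diffusion flow: under Assumption~\ref{ass:bdp}(iii), the standard $L^p$--estimate $\E[\sup_{t\leq T}|X^x_t - X^y_t|^p]\leq C_T|x-y|^p$ combined with Kolmogorov's continuity criterion yields a modification for which $(x,t)\mapsto X^x_t(\omega)$ is jointly continuous on $\bar{\cO}\times[0,T]$ for every $T>0$, outside a $\P$--null set. I fix such a modification, so that for any $x_n\to x$ in $\bar{\cO}$, $\sup_{t\in[0,T]}|X^{x_n}_t - X^x_t| \to 0$ almost surely.

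For the lower semicontinuity, suppose for contradiction that on a set of positive probability one can extract a subsequence $(x_{n_k})$ with $\eta^{x_{n_k}}\to T<\eta^x$. Uniform convergence of the flow on $[0,T+1]$ together with continuity of $X^x$ gives $X^{x_{n_k}}_{\eta^{x_{n_k}}}\to X^x_T$. By continuity of paths and the definition of $\eta^{x_{n_k}}$, each $X^{x_{n_k}}_{\eta^{x_{n_k}}}$ lies in the closed set $\partial\cO$, so the limit $X^x_T$ lies in $\partial\cO$. But $T<\eta^x$ forces $X^x_T\in\cO$, which is disjoint from $\partial\cO$ since $\cO$ is open. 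This contradiction proves $\liminf_n\eta^{x_n}\geq\eta^x$ almost surely.

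The upper semicontinuity relies on the key fact that every point of $\partial\cO$ is regular for $\bar{\cO}^c$ under Assumption~\ref{ass:continuity_sufficient_proba}(i)--(ii): for every $y\in\partial\cO$,
\begin{equation*}
\P\Bigl(\inf\{t>0:\, X^y_t\notin\bar{\cO}\}=0\Bigr)=1.
\end{equation*}
This is the classical consequence of combining the exterior cone condition at $y$ with the uniform ellipticity of $\sigma$ on $\partial\cO$, proved via a barrier supermartingale supported in the exterior cone and Dynkin's formula. Granted this, apply the strong Markov property at the stopping time $\eta^x$ (finite a.s.\ by Assumption~\ref{ass:continuity_sufficient_proba}(iii)) and the regularity of the boundary point $X^x_{\eta^x}$ to conclude that, almost surely, for every $\epsilon>0$ there exists $t_\epsilon\in(\eta^x,\eta^x+\epsilon)$ with $X^x_{t_\epsilon}\in\bar{\cO}^c$. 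Because $\bar{\cO}^c$ is open, the flow continuity yields $X^{x_n}_{t_\epsilon}\in\bar{\cO}^c$ for all $n$ large enough, hence $\eta^{x_n}\leq t_\epsilon<\eta^x+\epsilon$. Sending $\epsilon\downarrow 0$ gives $\limsup_n\eta^{x_n}\leq\eta^x$ almost surely.

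The main obstacle is the regularity of boundary points, and the exterior cone condition in Assumption~\ref{ass:continuity_sufficient_proba}(ii) is precisely the geometric hypothesis that, in combination with non-degenerate ellipticity on $\partial\cO$, secures it; the boundary case $x\in\partial\cO$ (where $\eta^x=0$) is covered by the same upper bound applied directly at $x$, so both inequalities hold uniformly in the starting point within $\bar{\cO}$.
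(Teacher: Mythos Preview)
Your argument is correct and relies on the same two ingredients as the paper: pathwise continuity of the flow $x\mapsto X^x$ and regularity of every boundary point under the exterior cone plus ellipticity hypothesis. The paper, however, does not carry out the semicontinuity split itself; it quotes Darling and Pardoux~\cite{darling1997} (see also \cite[Proposition~4.4]{pardoux1998bsde}) for the continuity of the exit time $\tau^x:=\inf\{s\geq 0:\,X^x_s\notin\bar{\cO}\}$ from the \emph{closed} domain, verifies the regularity hypothesis $\P(\tau^x=0)=1$ for $x\in\partial\cO$ via Bass/Pinsky, and then observes that regularity forces $\tau^x=\eta^x$ a.s.\ for every $x\in\bar{\cO}$.

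So your route is a self-contained unpacking of the cited result, applied directly to $\eta^x$ rather than via the detour through $\tau^x$. What you gain is transparency: one sees exactly where each piece of Assumption~\ref{ass:continuity_sufficient_proba} is used (the flow estimate for the lower bound, the cone condition and ellipticity for the upper bound via immediate exit after $\eta^x$). What the paper's formulation buys is brevity and a clean separation: the Darling--Pardoux statement isolates the abstract condition ($\tau^x$ finite and $\P(\tau^x>0)=0$ on $\partial\cO$) under which continuity holds, and the remaining work is reduced to checking boundary regularity. A minor technical point you leave implicit is that $(\mu,\sigma)$ are only assumed Lipschitz on $\bar{\cO}$, so to speak of $X^x_t$ for $t>\eta^x$ one should first fix a Lipschitz extension to $\R^d$; this is harmless but worth a sentence.
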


\begin{proof}
The proof is due to Darling and Pardoux~\cite{darling1997}, see also \cite[Proposition~4.4]{pardoux1998bsde}. More precisely, the authors show that if the stopping time 
\eqstar{
\tau^x:=\left\{s\geq 0,\, X^x_s\notin\bar{\cO}\right\},
}
is finite almost surely and $\P\left(\tau^x > 0\right) = 0$ for all $x\in\del\cO$, then $x\mapsto \tau^x$ is almost surely continuous. 
It remains to observe that, under Assumption~\ref{ass:continuity_sufficient_proba} (i)--(ii), $\P(\tau^x=0)=1$ for all $x\in\del\cO$ (see Bass~\cite[Corollary 3.3.2]{bass98} or Pinsky~\cite[Theorem 2.3.3]{pinsky95}) and thus $\tau^x=\eta^x$ for all $x\in\bar{\cO}$.
\end{proof}

\subsection{Integrability Condition}
\label{sec:integrability}

In this section, we provide explicit sufficient conditions to verify the integrability conditions on $(\psi_x)_{x\in\cO}$ required for Propositions~\ref{prop:continuity} and \ref{prop:continuity_proba}. 
Actually we study boundedness of $(\psi_x)_{x\in\cO}$ in $L^q$ for $q\geq 1$. In particular, the case $q=2$ ensures that the corresponding Monte Carlo estimator has finite variance. 
Let us introduce the constant
\eqlnostar{eq:const integrability}{
C_0:= \max\left(\|h\|_{\infty}, \sup_{l\in L} {\left\{\frac{\|c_l\|_{\infty}}{ p_l}\right\}}\right).
}
Clearly, it holds $|\psi^x|\leq C_0^{|\cK^x|}$ where $|\cK^x|$ denotes the cardinality of the set $\cK^x$, \textit{i.e.}, the total number of particles. 
In particular, if $C_0\leq 1$, then $|\psi^x|\leq 1$.
To the best of our knowledge, this is the only condition that has been used so far in the literature to ensure the integrability of $\psi^x$. 

In the rest of this section, we provide more technical but weaker conditions. First, we establish the desired result under minimal assumptions.

\begin{proposition}
\label{prop:unifint}
Suppose that there exists a non-negative function $v\in\cC^2(\cO)\cap\cC(\bar{\cO})$ satisfying
\eqstar{
\cL v + \beta\left(\sum_{l\in L} \frac{\left| c_l\right|^{q}}{p_l^{q-1}} v^{l}-v\right) \leq 0 ~~ \text{in } \cO, \quad~ 
v \geq\left| h\right|^q ~~ \text{on }\del \cO,
}
then we have $\E[|\psi^x|^q]\leq v(x)$. In particular, $(\psi^x)_{x\in\cO}$ is uniformly bounded in $L^q$.
\end{proposition}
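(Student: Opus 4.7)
The plan is to mimic the inductive argument in the proof of Proposition~\ref{prop:semi-linearcase}, with $v$ now playing the role of a super-solution that dominates $x\mapsto \E[|\psi^x|^q]$. The first step is a one-step Feynman--Kac inequality obtained by applying It\^o's formula to $(e^{-\beta t} v(\bX^x_t))_{t\geq 0}$, stopped at $\eta^x\wedge\theta_\eps$ with $\theta_\eps := \inf\{t\geq 0 : d(\bX^x_t,\del\cO) \leq \eps\}$ (to accommodate the fact that $v$ is only $\cC^2$ in the open set $\cO$), and then letting $\eps\downarrow 0$ using continuity of $v$ on $\bar\cO$. The super-solution differential inequality and the boundary condition $v\geq |h|^q$ together give
\[
 v(x) \;\geq\; \E\!\left[e^{-\beta\eta^x}|h(\bX^x_{\eta^x})|^q + \int_0^{\eta^x} \beta e^{-\beta s} \sum_{l\in L} \frac{|c_l|^q}{p_l^{q-1}}(\bX^x_s)\, v^l(\bX^x_s)\, ds\right].
\]

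Since $\tau$ is exponential with rate $\beta$ and independent of $\bX^x$, the two terms on the right can be rewritten as jump terms along the $\tau$-clock, exactly as in the derivation of~\eqref{eq:feynman-kac branching1}. Combined with the identity $\sum_{l\in L}\frac{|c_l|^q}{p_l^{q-1}} v^l = \sum_{l\in L} p_l \bigl|\frac{c_l}{p_l}\bigr|^q v^l$, this yields
\[
 v(x) \;\geq\; \E\!\left[|h(X^x_{\eta^x})|^q \ind_{\tau\geq\eta^x} + \Bigl|\frac{c_I(X^x_\tau)}{p_I}\Bigr|^q v^I(X^x_\tau)\, \ind_{\tau<\eta^x}\right] = \E[\tilde\psi^x_0],
\]
where $\tilde\psi^x_n$ is the random variable obtained from $\psi^x_n$ in Proposition~\ref{prop:semi-linearcase} by replacing $h$ with $|h|^q$, each factor $c_{I^k}/p_{I^k}$ with $|c_{I^k}/p_{I^k}|^q$, and $u$ with $v$ in the trailing product over $\cK^x_{n+1}$.

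The iteration over generations is then entirely parallel to the proof of Proposition~\ref{prop:semi-linearcase}: applying the one-step inequality conditionally on $\cF_0$ to each particle $k\in\cK^x_1$ alive in $\cO$, and using the branching property together with conditional independence of the sub-trees, one shows by induction on $n$ that $\E[\tilde\psi^x_n]\leq v(x)$ for every $n\in\N$. By Assumption~\ref{ass:extinction}, the process goes extinct almost surely, so the trailing product over $\cK^x_{n+1}$ eventually reduces to an empty product equal to $1$, and $\tilde\psi^x_n\to|\psi^x|^q$ a.s. Since all factors are non-negative, Fatou's lemma gives $\E[|\psi^x|^q] \leq \liminf_n \E[\tilde\psi^x_n] \leq v(x)$, and uniform boundedness of $(\psi^x)_{x\in\cO}$ in $L^q$ then follows from continuity of $v$ on the compact $\bar\cO$.

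The only delicate point is the localization in the initial It\^o step, since $v$ is not assumed $\cC^2$ up to $\del\cO$ and its derivatives may blow up there. This is handled in the standard way: on $[0,\eta^x\wedge\theta_\eps]$, both $v$ and the coefficient $Dv\,\sigma$ of the stochastic integral are bounded, so the local martingale is a true martingale; the limit $\eps\downarrow 0$ is then taken by monotone convergence on the non-negative integral term and by bounded convergence on the boundary term, using continuity of $v$ on $\bar\cO$ and the almost sure convergence $\bX^x_{\eta^x\wedge\theta_\eps}\to \bX^x_{\eta^x}$. A noteworthy simplification compared to Proposition~\ref{prop:semi-linearcase} is that no uniform integrability is needed to pass to the limit in $n$: non-negativity of all factors makes Fatou sufficient, which is precisely the advantage of phrasing the result in terms of a super-solution.
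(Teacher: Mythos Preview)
Your proof is correct and follows essentially the same route as the paper: an It\^o/Feynman--Kac inequality for the supersolution $v$, the rewriting $\sum_{l}\frac{|c_l|^q}{p_l^{q-1}}v^l=\sum_l p_l\bigl|\tfrac{c_l}{p_l}\bigr|^q v^l$, iteration along generations as in Proposition~\ref{prop:semi-linearcase}, and Fatou to pass to the limit. The only difference is that you spell out the localization $\theta_\eps$ needed because $v$ is merely $\cC^2(\cO)\cap\cC(\bar\cO)$, a point the paper leaves implicit; your handling of it (monotone convergence for the nonnegative integral, bounded convergence for $e^{-\beta(\eta^x\wedge\theta_\eps)}v(\bX^x_{\eta^x\wedge\theta_\eps})$ using boundedness of $v$ on $\bar\cO$) is sound.
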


\begin{proof}
First, we observe that It\^o's formula yields
\eqstar{
v(x) \geq \E\Bigl[e^{-\beta{\eta^x}} \left| h(\bX^x_{\eta^x})\right|^q + \int^{\eta^x}_0 \beta e^{-\beta s}\sum_{l\in L} \frac{\left| c_l(\bX^x_s)\right|^q}{p_l^{q-1}} v^{l}(\bX^x_s) \,ds\Bigr].
}
Next, by repeating the arguments of Proposition~\ref{prop:semi-linearcase}, we get
\eqstar{
v(x) &\geq \E\left[\prod_{\substack{{k \in \cup^n_{i=0}\cK^x_i}\\{X^k_{T_k}\notin \cO}}}\left|h(X^k_{T_k})\right|^q \prod_{\substack{{k \in \cup^n_{i=0}\cK^x_i}\\{X^k_{T_k} \in \cO}}} \frac{\left|c_{I^k}(X^k_{T_k})\right|^q}{p_{I^k}^q} \prod_{k \in \cK^x_{n+1}}v(X^k_{T_{k^-}})\right].
}
Then the conclusion follows easily from Fatou's lemma.
\end{proof}

In practice, one can look for a constant supersolution in order to apply Proposition~\ref{prop:unifint}. For instance, for the case $q=1$, such a solution exists if and only if $\sum_{l\in L} {\left| c_l\right|} \|h\|_{\infty}^{l}\leq \|h\|_{\infty}.$
Otherwise, finding a supersolution has to be done on a case by case basis and might turn out to be a difficult task. For this reason, we provide an alternative result for which conditions are easier to verify than Proposition~\ref{prop:unifint} and less stringent than assuming $C_0\leq 1$. Essentially, it signifies that the integrability condition is satisfied if the domain is sufficiently small.

The idea is to dispose of the spatial parameter $x$ by introducing a branching process which stochastically dominates the branching diffusion process. Denote 
\eqstar{
\delta := 1-\inf_{x\in\cO}\left\{\E\left[e^{-\beta\eta^x}\right]\right\}.
}
Let us introduce a new probability mass function $(\tilde{p}_l)_{l\in \tilde{L}}$ with $\tilde{L}:=L\cup\{0\}$ as follows:
\eqstar{
\tilde{p}_0 := 1-\delta + \delta p_0 ~~ \text{and} ~~ \tilde{p}_l &:= \delta p_l ~~\text{for all }l\geq 1,
}
and the corresponding transition matrix $\tilde{P}=(\tilde{P}_{i,j})_{i,j\geq 0}$ given by
\eqstar{
\tilde{P}_{0,0} = 1 ~~ \text{and} ~~ \tilde{P}_{i,i+l-1} = \tilde{p}_l ~~ \text{for all } i\geq 1, l\in \tilde{L}.
}

\begin{proposition}
\label{lem:unifinteasy}
\label{prop:extinction}
Denote by $R$ the common radius of convergence of $f(s):=\sum_{l\in L} {p_l s^l}$ and $\tilde{f}(s):=\sum_{l\in \tilde{L}} {\tilde{p}_l s^l}$. 
If $R>1$ and $\sum_{l\in\tilde{L}}{l\tilde{p}_l}<1$, then 
$(\psi^x)_{x\in\cO}$ is uniformly bounded in $L^q$ provided that $ C^q_0\leq\gamma$ where
\eqlnostar{eq:limit threshold}{
\gamma := \frac{s^*}{\tilde{f}(s^*)}=\frac{s^*}{1-\delta + \delta f(s^*)},
}
where $s^*$ is the solution (if any) to $s\tilde{f}'(s)=\tilde{f}(s)$ and $s^*=R$ otherwise. In addition, if $R=\infty$, then $\gamma$ goes to infinity as $\mathrm{diam}(\cO)$ goes to zero.
\end{proposition}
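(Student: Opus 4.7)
The plan is to reduce the $L^q$-bound on $\psi^x$ to controlling an exponential moment of the total population $|\cK^x|$, and then dominate this last quantity through the generating function of a pure Galton--Watson process with offspring distribution $(\tilde{p}_l)_{l\in\tilde{L}}$. Since every factor in $\psi^x$ is bounded in modulus by $C_0$, one has the pointwise estimate $|\psi^x|^q \leq C_0^{q|\cK^x|}$, so it suffices to bound $V(x):=\E[C_0^{q|\cK^x|}]$ by $s^*$ uniformly in $x\in\cO$.

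I would argue by induction on the generation. Set $V_n(x):=\E[C_0^{q|\cup_{i=0}^{n}\cK^x_i|}]$, so that $V_n\uparrow V$ monotonically. Conditioning on $\cF_0$ and using the branching property as in the proof of Proposition~\ref{prop:semi-linearcase} (the $|I|$ offsprings launch conditionally iid branching diffusions from the common point $X^x_\tau$), I obtain the recursion
\begin{equation*}
V_n(x) = C_0^q\,\E\Bigl[\ind_{\tau\geq\eta^x} + \ind_{\tau<\eta^x}\prod_{i=0}^{|I|-1}V_{n-1}(X^x_\tau)\Bigr].
\end{equation*}
Assume inductively that $V_{n-1}\leq M$ on $\bar{\cO}$ for some $M\geq 1$. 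Since $\P(\tau<\eta^x)=1-\E[e^{-\beta\eta^x}]\leq\delta$ and $f(M)\geq f(1)=1$, we deduce
\begin{equation*}
V_n(x)\leq C_0^q\bigl[1-\delta+\delta f(M)\bigr]=C_0^q\,\tilde{f}(M).
\end{equation*}
The induction therefore closes as soon as $M\geq 1\vee C_0^q$ and $C_0^q\,\tilde{f}(M)\leq M$.

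The final step is calculus on $g(s):=s/\tilde{f}(s)$. The assumption $\sum_{l}l\tilde{p}_l<1$ gives $\tilde{f}'(1)<1$, whence $g(1)=1$ and $g'(1)>0$; on $[1,R)$ the function $g$ attains its supremum $\gamma$ either at an interior critical point $s^*$ solving $s\tilde{f}'(s)=\tilde{f}(s)$, or in the limit $s\to R$. Provided $C_0^q\leq\gamma$, the choice $M=s^*$ satisfies both requirements (indeed $C_0^q\leq s^*/\tilde{f}(s^*)\leq s^*$ since $\tilde{f}(s^*)\geq\tilde{f}(1)=1$), so iterating the previous step gives $V_n\leq s^*$ for every $n$, and monotone convergence produces the uniform bound $\E[|\psi^x|^q]\leq V(x)\leq s^*$. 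For the last claim, as $\mathrm{diam}(\cO)\to 0$ the exit times $\eta^x\to 0$ uniformly over $\cO$, hence $\delta\to 0$ and $\tilde{f}(s)\to 1$ pointwise in $s$; consequently $\gamma\geq g(s)\to s$ for every fixed $s\geq 1$, and taking $s$ arbitrarily large (which is allowed when $R=\infty$) shows $\gamma\to\infty$.

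The main subtlety I anticipate is the branching-property argument in the inductive step: one has to invoke carefully the conditional independence of the offspring trajectories given $\cF_0$ in order to factorize the product before passing the bound $M$ through it. Everything else reduces to the elementary inequality $\P(\tau<\eta^x)\leq\delta$ and to standard monotonicity properties of the power series $\tilde{f}$.
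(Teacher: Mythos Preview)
Your argument is correct, but it follows a genuinely different route from the paper. The paper proceeds in three steps: (i) it shows that the sequence $(N^x_n)_{n\in\N}$ recording the number of particles alive at the $n$th jump time of the branching diffusion is stochastically dominated, uniformly in $x$, by the embedded chain $(\tilde N_n)_{n\in\N}$ of a pure Galton--Watson process with offspring law $(\tilde p_l)_{l\in\tilde L}$; (ii) it observes that $|\cK^x|$ is the extinction time of $(N^x_n)$ and deduces $\E[C_0^{q|\cK^x|}]\leq \E[C_0^{q\zeta}]$ where $\zeta$ is the extinction time of $(\tilde N_n)$; (iii) it invokes a result of Daley to identify the radius of convergence of $s\mapsto\E[s^\zeta]$ as $\gamma=s^*/\tilde f(s^*)$. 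For the last assertion it manipulates the equation $s^*\tilde f'(s^*)=\tilde f(s^*)$ directly. By contrast, you bypass both the stochastic domination and Daley's theorem: your induction on the truncated moments $V_n(x)$, combined with the branching recursion and the monotonicity of $p\mapsto 1+p(f(M)-1)$, yields the closed inequality $V_n\leq C_0^q\tilde f(M)$ whenever $V_{n-1}\leq M$, and the choice $M=s^*$ (where $g(s)=s/\tilde f(s)$ is maximized) makes this self-improving. Your approach is more elementary and self-contained; the paper's approach yields the marginally sharper bound $\E[|\psi^x|^q]\leq\E[C_0^{q\zeta}]$ rather than $s^*$, and its stochastic domination step is reused elsewhere (Remark~\ref{rem:extinction}) to give a sufficient condition for extinction. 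Your treatment of the $\mathrm{diam}(\cO)\to 0$ limit via $g(s)\to s$ pointwise is also cleaner than the paper's computation. One minor point: in the borderline case $s^*=R$ with $C_0^q=\gamma$ you implicitly need $f(R)<\infty$ to set $M=R$; this is automatic, since if $f(s)\to\infty$ as $s\to R^-$ then $g(s)\to 0$ and the maximum of $g$ is attained at an interior point.
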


\begin{proof}
(i) Denote by $(N^x_n)_{n\in\N}$ and $(\tilde{N}_n)_{n\in\N}$ the number of particles at the $n$th jump time (arrival time) in the branching diffusion process and in a branching process with offspring distribution $(\tilde{p}_l)_{l\in \tilde{L}}$ respectively. In particular, $(\tilde{N}_n)_{n\in\N}$ is a Markov chain with transition matrix $\tilde{P}$. Let us first show that $(N^x_n)_{n\in\N}$ is stochastically dominated by $(\tilde{N}_n)$, \textit{i.e.},  $\P(N^x_n\geq l)\leq \P(\tilde{N}_n\geq l)$ for all $n,l\geq 1$. We observe that 
\eqstar{
\tilde{p}_0 = \inf_{x\in\cO}\left\{\P\left(N^x_1=0\right)\right\} ~~ \text{ and } ~~ \tilde{p}_l = \sup_{x\in\cO}\left\{\P\left(N^x_1=l\right)\right\} ~~ \text{for }l\in\tilde{L}\setminus\{0\}.
}
It follows that  $\P(N^x_1\geq l)\leq \P(\tilde{N}_1\geq l)$ for all $l\geq 1$. 
For the incremental step, we first observe that for all $l\geq 1$, 
\eqstar{
\P\left(N^x_{n+1}\geq l, N^x_n\geq l\right) 
	& \leq \P\left(N^x_{n}\geq l\right) - \P\left(N^x_{n}=l, N^x_{n+1} = l - 1\right) \\
	& \leq \P\left(N^x_{n}\geq l\right) -  \tilde{p}_0 \P\left(N^x_{n}=l\right) \\
    & \leq (1-\tilde{p}_0)\P\left(N^x_{n}\geq l\right) +  \tilde{p}_0 \P\left(N^x_{n}\geq l+1\right),
}
where the second inequality follows from Markov property. 
In addition, it holds 
\eqstar{
\P\left(N^x_{n+1}\geq l, N^x_n\leq l-1 \right) 
	& \leq \sum_{j=1}^{l-1} {\P\left(N^x_n=j,\, N^x_{n+1}-N^x_n\geq l-j\right)} \\
	& \leq \sum_{j=1}^{l-1} {\sum_{i=l-j+1}^{\infty} {\tilde{p}_{i}} {\P\left(N^x_n=j\right)}} \\
    & \leq \sum_{i=l}^{\infty}\tilde{p}_i \P\left(N^x_n\geq 1\right) - \sum_{i=2}^{\infty} {\tilde{p}_i} \P\left(N^x_n\geq l\right) + \sum_{j=2}^{l-1} \tilde{p}_{l-j+1} \P\left(N^x_n\geq j\right),
}
where the second inequality follows once again from Markov property.
Combining both computations above, we deduce that
\eqstar{
\P\left(N^x_{n+1}\geq l\right) 
 \leq \sum_{i=l}^{\infty}\tilde{p}_i \P\left(N^x_n\geq 1\right) +  \sum_{j=2}^{l-1} \tilde{p}_{l-j+1}\P\left(N^x_n\geq j\right) + \tilde{p}_1 \P\left(N^x_n\geq l\right) +  \tilde{p}_0 \P\left(N^x_n \geq l+1\right).
}
Hence the desired result follows easily by induction and Markov property.

\noindent (ii) Let us show next that $\E[|\psi^x|]\leq \E[C_0^{\zeta}]$ where $\zeta$ the extinction time  of $(\tilde{N}_n)_{n\in\N}$. First we recall that $|\psi^x|\leq C_0^{|\cK^x|}$ where $|\cK^x|$ denotes the cardinality of the set $\cK^x$. We observe further that $|\cK^x|$ coincides with the extinction time of $(N^x_n)_{n\in\N}$. 
Clearly, Step~(i) above yields that $\P(|\cK^x|\geq n)\leq \P(\zeta\geq n)$ for all $n\geq 1$, and thus 
\eqstar{
\E[|\psi^x|]\leq \E[C_0^{|\cK^x|}]\leq \E[C_0^{\zeta}].
}

\noindent(iii) We are now in a position to conclude the proof. It follows from Daley~\cite[Theorem~2]{daley1969qsd} that the power series $\E[s^{\zeta}]$ converges on its radius of convergence $\gamma=\frac{s^*}{\tilde{f}(s^*)}$. 
In addition, if $R=\infty$, then there exists a solution $s^*$ to $s \tilde{f}'(s)=\tilde{f}(s)$. Using further $\tilde{f}(s)= 1-\delta+\delta f(s)$, we deduce that
\eqstar{
s^* f'(s^*) - f(s^*) = \frac{1-\delta}{\delta}.
}
It follows that $s^*$ goes to infinity as $\delta$ goes to zero, or equivalently, $\mathrm{diam}(\cO)$ goes to zero. Then we have 
\eqstar{
\gamma = \frac{1}{\tilde{f}'(s^*)} = \frac{1}{\delta f'(s^*)} = \frac{s^*}{1-\delta} \left(1-\frac{f(s^*)}{s^* f'(s^*)}\right). 
}
To conclude, it remains to observe that $\frac{f(s^*)}{s^* f'(s^*)}$ is bounded away from $1$ since $s\mapsto\frac{f(s)}{sf'(s)}$ is decreasing.

\end{proof}

\begin{remark}
\label{rem:extinction}
 It follows immediately from Step~(i) of the proof of Proposition~\ref{prop:extinction} that if $\sum_{l\in \tilde{L}} {l \tilde{p}_l} \leq 1$, then the branching diffusion process goes extinct almost surely, \textit{i.e.}, Assumption~\ref{ass:extinction} holds.
\end{remark}

\section{Semi-Linear PDEs with Non-Linear Gradient Term}
\label{sec:quasilinear}
In this section we study the case of semi-linear PDE with non-linearity in gradient of the solution, \textit{i.e.}, we assume that $m\geq 1$ in Section~\ref{sec:framework} so that 
the particles in the branching diffusion process carry different marks to account for it. Throughout this section, we suppose that Assumption~\ref{ass:pde}--\ref{ass:extinction} remain valid.

\subsection{Probabilistic Representation}
\label{section:main results}

Our next assumption is the key automatic differentiation condition on the underlying diffusion process $\bX^{x}$. 
We will provide explicit conditions and formulas for it in the next sections.

\begin{assumption}
\label{ass:malliavin}
\emph{(i)} The map $x\mapsto \E[e^{-\beta {\eta^x}}h(\bX^x_{\eta^x})]$ belongs to $\cC^{1}(\cO)\cap\cC(\bar{\cO})$  and there exists a measurable function $\cW_{\del\cO}(x,W)=\cW_{\del\cO}(x,(W_r)_{r\in[0,{\eta^x}]})$ such that
\eqstar{ 
 D \E\left[ e^{-\beta {\eta^x}} h(\bX^x_{\eta^x})\right] & = \E\left[ e^{-\beta {\eta^x}} h(\bX^x_{\eta^x}) \cW_{\del\cO}(x,W)\right].
}

\noindent\emph{(ii)} For any $g:\cO\to\R$ bounded measurable, the map  $x\mapsto\E[\int_0^{{\eta^x}} {e^{-\beta s} g(\bX^{x}_{s}) \,ds}]$ belongs to $\cC^{1}(\cO)\cap\cC(\bar{\cO})$ and there exists a measurable function $\cW_{\cO}(s,x,W)=\cW_{\cO}(s,x,(W_r)_{r\in[0,s]})$ such that
\eqstar{ 
 D \E\left[ \int_0^{{\eta^x}} {e^{-\beta s} g(\bX^{x}_{s}) \,ds} \right] &=  \E\left[ \int_0^{{\eta^x}} {e^{-\beta s}g(\bX^{x}_{s}) \cW_{\cO}(s,x,W) \,ds} \right].
}
\end{assumption}

Let us define $\cW(s,x,W)=\cW(s,x,(W_r)_{r\in[0,s]})$ as follows:
\eqstar{
\cW(s,x,W) := \cW_{\del\cO}(x,W)\mathbf{1}_{\bX^{x}_s\notin\cO} + \cW_{\cO}(s,x,W)\mathbf{1}_{\bX^{x}_s\in\cO}.
}
We consider a marked branching diffusion process starting from $x\in\cO$ as in Section~\ref{section:branching diffusion recap} and denote 
\eqstar{ 
\cW_k := \mathbf{1}_{m_k=0} + \mathbf{1}_{m_k\neq0} b_{m_k}(X^k_{T_{k^-}}) \cdot \cW(\Delta T_k,X^k_{T_{k^-}},W^k),
}
where $m_k$ and $\Delta T_k := T_k - T_{k^-}$ stand for the mark and the lifetime of particle $k$ respectively. We next introduce the following random variable:
\eqstar{ 
\psi^x := \prod_{\substack{{k \in \cK^x}\\{X^k_{T_k}\notin \cO}}} \frac{e^{-\beta\Delta T_k} h(X^k_{T_k})}{\bar{F}(\Delta T_k)} \cW_k \prod_{\substack{{k \in \cK^x}\\{X^k_{T_k}\in \cO}}} \frac{\beta e^{-\beta\Delta T_k} c_{I^k}(X^k_{T_k})}{p_{I^k} \rho(\Delta T_k)} \cW_k.
}
where $\bar{F}(t):=\int_t^{\infty}{\rho(s)\,ds}$, $t\geq 0$.

\begin{proposition}
\label{prop:strongcase}
Suppose Assumption~\ref{ass:malliavin} holds. Assume further that PDE~\eqref{eq:main elliptic pde} has a solution
$u\in\cC^2(\cO)\cap\cC(\bar{\cO})$ 
such that the functions $(b_i\cdot Du)_{i=1,\cdots,m}$ are bounded and the sequence $(\psi^x_n)_{n\in\N}$ defined by  
\eqstar{
\psi^x_n := \prod_{\substack{{k \in \cup^n_{i=1}\cK^x_i}\\{X^k_{T_k}\notin \cO}}} \frac{e^{-\beta\Delta T_k} h(X^k_{T_k})}{\bar{F}(\Delta T_k)} \cW_k \prod_{\substack{{k \in \cup^n_{i=1}\cK^x_i}\\{X^k_{T_k}\in \cO}}} \frac{\beta e^{-\beta \Delta T_k} c_{I^k}(X^k_{T_k})}{p_{I^k} \rho(\Delta T_k)} \cW_k \prod_{\substack{{k \in \cK^x_{n+1}}\\{m_k = 0}}}u(X^k_{T_{k^-}})
\prod_{\substack{{k \in \cK^x_{n+1}}\\{m_k \neq 0}}} (b_{m_k} \cdot Du)(X^k_{T_{k^-}}),
}
is uniformly integrable. Then, we have $u(x) = \E[\psi^x]$.
\end{proposition}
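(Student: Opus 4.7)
My plan is to follow the template of Proposition~\ref{prop:semi-linearcase}, augmented with the automatic differentiation formula of Assumption~\ref{ass:malliavin} to handle every occurrence of a gradient factor $b_i\cdot Du$. The starting point is again the Feynman--Kac representation: since $u\in\cC^2(\cO)\cap\cC(\bar{\cO})$ solves PDE~\eqref{eq:main elliptic pde}, applying It\^o's formula to $(e^{-\beta s}u(\bX^x_s))_{s\in[0,\eta^x]}$ yields
\eqstar{
u(x) = \E\Bigl[e^{-\beta\eta^x} h(\bX^x_{\eta^x}) + \int_0^{\eta^x}\beta e^{-\beta s} f(u,Du)(\bX^x_s)\,ds\Bigr].
}
Conditioning on the independent lifetime $\tau$ (with density $\rho$) lets me replace the Lebesgue integral by the single evaluation $\rho(\tau)^{-1}\beta e^{-\beta\tau} f(u,Du)(\bX^x_\tau)\ind_{\tau<\eta^x}$ and absorb the boundary term into $\bar{F}(\eta^x)^{-1}\ind_{\tau\geq\eta^x}$. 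Expanding the polynomial $f$ and randomizing via the probability mass function $(p_l)_{l\in L}$ then gives the one-step identity
\eqstar{
u(x) = \E\Bigl[\tfrac{e^{-\beta\eta^x}h(\bX^x_{\eta^x})}{\bar{F}(\eta^x)}\ind_{\tau\geq\eta^x} + \tfrac{\beta e^{-\beta\tau}c_I(\bX^x_\tau)}{p_I\rho(\tau)}\,u^{I_0}(\bX^x_\tau)\prod_{i=1}^m(b_i\cdot Du)^{I_i}(\bX^x_\tau)\ind_{\tau<\eta^x}\Bigr].
}

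The new ingredient, absent from Proposition~\ref{prop:semi-linearcase}, is the removal of the analytical factors $b_i(y)\cdot Du(y)$ now appearing for every marked first-generation offspring. To handle these, I differentiate the Feynman--Kac representation at any $y\in\cO$ and invoke both parts of Assumption~\ref{ass:malliavin}, letting the gradient pass inside the expectation at the cost of inserting the weights $\cW_{\del\cO}(y,W)$ and $\cW_{\cO}(\tau,y,W)$; the boundedness of $(b_j\cdot Du)_j$ is what makes $\beta f(\cdot,u,Du)$ bounded-measurable and legitimizes the termwise application of the ADF. After rewriting with respect to the lifetime density as above, one obtains an identity for $b_i(y)\cdot Du(y)$ of exactly the same shape as the one for $u(y)$, but weighted throughout by $b_i(y)\cdot\cW_{\del\cO}$ on the absorption event and $b_i(y)\cdot\cW_{\cO}$ on the branching event. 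Substituting this into every marked offspring factor, and using the mutual independence of the sequences $(\tau^k,I^k,W^k)_{k\in\cK}$ to decouple the resulting sub-expectations given $\cF_0$, I obtain $u(x)=\E[\psi^x_1]$: the weight produced at particle $k$ is precisely $\cW_k = b_{m_k}(X^k_{T_{k^-}})\cdot\cW(\Delta T_k, X^k_{T_{k^-}}, W^k)$ when $m_k\neq 0$, and $\cW_k=1$ when $m_k=0$, matching the definition preceding the statement.

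An induction on the generation number then propagates the identity: at each step the same two moves (Feynman--Kac expansion for mark-$0$ frontier particles, ADF-based expansion for marked frontier particles) are applied to every vertex of $\cK^x_{n+1}$, and the branching property keeps the newly appearing subtrees mutually independent once conditioned on $\cF_n$. This yields $u(x)=\E[\psi^x_n]$ for every $n\in\N$. By Assumption~\ref{ass:extinction} the frontier $\cK^x_{n+1}$ is eventually empty almost surely, hence $\psi^x_n\to\psi^x$ almost surely, and the uniform integrability hypothesis allows me to pass to the limit to conclude $u(x)=\E[\psi^x]$. The delicate point I anticipate is the bookkeeping of the ADF weights throughout the iteration: one has to check at each level that, once the parent position $X^k_{T_{k^-}}$ is frozen by conditioning, the weight $\cW_k$ is a functional of only $(\tau^k, W^k)$, so that conditional independence really does produce a product structure rather than a coupled expectation. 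The functional form $\cW(\Delta T_k, X^k_{T_{k^-}}, W^k)$ built into Assumption~\ref{ass:malliavin} is designed precisely to make this argument go through.
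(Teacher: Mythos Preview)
Your proposal is correct and follows essentially the same route as the paper: Feynman--Kac representation, rewriting against the lifetime density $\rho$ to obtain the one-step identity $u(x)=\E[\psi^x_0]$, differentiation via Assumption~\ref{ass:malliavin} to express $b_i\cdot Du$ in the same form weighted by $\cW$, then iteration over generations using conditional independence, and finally passage to the limit via Assumption~\ref{ass:extinction} and uniform integrability. One small expository slip: in the step where you claim to reach $\E[\psi^x_1]$ you only mention substituting into the \emph{marked} offspring factors, but of course the unmarked factors $u(X^k_{T_{k^-}})$ must simultaneously be expanded by the plain Feynman--Kac identity---you do state both moves correctly in the induction step, so this is merely an omission in the wording rather than a gap.
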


\begin{proof}
The proof follows similar arguments as the proof of Proposition~\ref{prop:semi-linearcase}. Using It\^o's formula, we have the following Feynman-Kac representation:
\eqlnostar{eq:feynman-kac1}{
u(x) = \E\Bigl[e^{-\beta {\eta^x}}h(\bX^x_{\eta^x}) + \int^{\eta^x}_0 \beta e^{-\beta s} f(\cdot,u,Du)(\bX^x_s) \,ds\Bigr].
} 
Then, we can write
\eqlnostar{eq:feynmankacquasi}{
u(x) &= \E\left[\frac{e^{-\beta {\eta^x}}h(\bX^x_{\eta^x})}{\bar{F}({\eta^x})} \ind_{\tau \geq {\eta^x}} + \frac{\beta e^{-\beta \tau} f(\cdot,u,Du)(\bX^x_{\tau})}{\rho(\tau)}\ind_{\tau < {\eta^x}} \right]\\
&= \E\left[\frac{e^{-\beta \eta^x}h(X^{x}_{\eta^x})}{\bar{F}(\eta^x)} \ind_{\tau\geq\eta^x} + \frac{\beta e^{-\beta \tau} c_{I}(X^{x}_{\tau})}{p_{I} \rho(\tau)}u^{I_0}(X^{x}_{\tau}) \prod^m_{i=1} (b_i \cdot Du)^{I_i}(X^{x}_{\tau})\ind_{\tau<\eta^x} \right].
}
In other words, we have $u(x)=\E\left[\psi_0^x\right]$. 
Furthermore, in the original Feynman-Kac formula, we obtain by differentiating and using Assumption \ref{ass:malliavin},
\eqlnostar{eq:feynman-kac deriv1}{
D u(x) &= \E\left[e^{-\beta {\eta^x}}h(\bX^x_{\eta^x}) \cW_{\del\cO}(x,W) + \int^{\eta^x}_0 {\beta e^{-\beta s}f(\cdot,u,Du)(\bX^x_s) \cW_{\cO}(s,x,W) \,ds}\right]\\
&= \E\bigg[\psi_0^x \MalWt(T_{\emptyset},x,W) \bigg].
}
Since each offspring has the same dynamic as the parent particle, we can repeat the above calculations for $k\in\cK^x_1$ and plug the results back in \eqref{eq:feynmankacquasi} to obtain $u(x)=\E[\psi_1^x]$ by conditional independence of particles in $\cK^x_1$ given $\cF_0$.
We conclude by iteration that, for any $n\in\N$, $u(x)=\E[\psi_n^x]$ and, as $n \to \infty,$ $u(x)=\E[\psi^x]$.
\end{proof}

Similar to Section~\ref{sec:mainresultsemi}, Proposition~\ref{prop:strongcase} provides a result of uniqueness for a class of semi-linear PDEs given an appropriate uniform integrability condition is satisfied. Next we establish a result of existence by showing that the probabilistic representation is a viscosity solution of PDE~\eqref{eq:main elliptic pde}. This is the main result of this section.

\begin{theorem}
\label{thm:quasilinear}
Suppose Assumption~\ref{ass:malliavin} holds. If we further assume that $(\psi^x)_{x\in\cO}$ and, for $i=1,\ldots,m$, $(\psi^x b_i(x)\cdot\MalWt(T_{\emptyset},x,W))_{x\in\cO}$ are uniformly bounded in $L^1$, then $u:x\mapsto \E[\psi^x]$ belongs to $\cC^1(\cO)\cap\cC(\bar{\cO})$ and solves PDE~\eqref{eq:main elliptic pde} in the viscosity sense.
\end{theorem}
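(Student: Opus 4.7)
The proof follows the strategy of Theorem~\ref{thm:semi-linear}, with the branching property combined with the automatic differentiation formulas of Assumption~\ref{ass:malliavin} playing the role previously played by It\^o's formula alone. The main additional work is to show that $u \in \cC^1(\cO)$ and to identify the auxiliary quantities
\[
w_i(x) := \E\left[\psi^x\, b_i(x) \cdot \cW(T_\emptyset, x, W)\right], \qquad i = 1, \ldots, m,
\]
with $b_i \cdot Du$; the $L^1$-boundedness hypothesis ensures these are well-defined and uniformly bounded on $\cO$.

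First I would use the branching property exactly as in the proof of Proposition~\ref{prop:strongcase}: conditioning on $\cF_0$ and the mark type of each first-generation offspring, and exploiting that conditionally on $(X^x_\tau, I)$ each subtree is distributed as $\psi^y$ (with the weight $b_{m}(y) \cdot \cW(T_\emptyset, y, W)$ appended if its root mark is $m \neq 0$) starting from $y = X^x_\tau$. This yields
\[
u(x) = \E\left[\frac{e^{-\beta \eta^x} h(X^x_{\eta^x})}{\bar{F}(\eta^x)} \ind_{\tau \geq \eta^x} + \frac{\beta e^{-\beta \tau} c_I(X^x_\tau)}{p_I \rho(\tau)} u^{I_0}(X^x_\tau) \prod_{i=1}^m w_i^{I_i}(X^x_\tau) \ind_{\tau < \eta^x}\right].
\]
Integrating out $\tau$ against its density $\rho$ and $I$ against $(p_l)_{l\in L}$ converts this to the Feynman-Kac representation
\[
u(x) = \E\left[e^{-\beta \eta^x} h(\bX^x_{\eta^x}) + \int_0^{\eta^x} \beta e^{-\beta s}\, F(\bX^x_s, u(\bX^x_s), w_1(\bX^x_s), \ldots, w_m(\bX^x_s))\, ds\right],
\]
where $F(x, y, \zeta) := \sum_{l\in L} c_l(x) y^{l_0} \prod_{i=1}^m \zeta_i^{l_i}$. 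Continuity of $u$ on $\bar{\cO}$ then follows from the continuity parts of Assumption~\ref{ass:malliavin} together with boundedness of $u$ and the $w_i$.

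Next I would apply Assumption~\ref{ass:malliavin} directly to this Feynman-Kac representation, noting that $y \mapsto F(y, u(y), w_1(y), \ldots, w_m(y))$ is bounded measurable on $\cO$. This yields $u \in \cC^1(\cO)$ and
\[
Du(x) = \E\left[e^{-\beta \eta^x} h(\bX^x_{\eta^x}) \cW_{\del\cO}(x, W) + \int_0^{\eta^x} \beta e^{-\beta s} F(\bX^x_s, u, w_1, \ldots, w_m)(\bX^x_s) \cW_\cO(s, x, W)\, ds\right].
\]
Contracting with $b_i(x)$ and running the branching argument of the previous step in reverse (now with the extra root weight $b_i(x)\cdot\cW(T_\emptyset,x,W)$ carried along), the right-hand side equals precisely $w_i(x)$. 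Hence $w_i(x) = b_i(x) \cdot Du(x)$, and the Feynman-Kac representation becomes
\[
u(x) = \E\left[e^{-\beta \eta^x} h(\bX^x_{\eta^x}) + \int_0^{\eta^x} \beta e^{-\beta s}\, f(\bX^x_s, u(\bX^x_s), Du(\bX^x_s))\, ds\right].
\]

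Finally, the Markov property applied at $\eta^x \wedge \delta$ gives the dynamic programming identity analogous to~\eqref{eq:viscosity}, and the viscosity subsolution/supersolution argument of Theorem~\ref{thm:semi-linear} carries over verbatim, now using that $y \mapsto f(y, u(y), Du(y))$ is bounded and continuous on $\cO$. The main obstacle is the identification $w_i(x) = b_i(x) \cdot Du(x)$ in the third paragraph: one has to verify that the recursive unwinding of $\psi^x$ matches the root weight produced by automatic differentiation, which requires careful bookkeeping of the conditional independences in the branching tree and uses in an essential way the specific form of $\cW_k$ encoding the mark of each particle.
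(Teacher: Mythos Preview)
Your proposal is correct and follows essentially the same route as the paper's proof: you unwind $\psi^x$ via the branching property to obtain a Feynman--Kac formula involving the auxiliary functions $w_i$ (the paper calls them $v_i$), apply Assumption~\ref{ass:malliavin} to get $u\in\cC^1(\cO)\cap\cC(\bar{\cO})$ and an expression for $Du$, rewind the branching argument with the root weight to identify $w_i=b_i\cdot Du$, and finish with the viscosity argument of Theorem~\ref{thm:semi-linear}. The bookkeeping you flag as the main obstacle is exactly the identification step the paper handles by writing $Du(x)=\E[\psi^x\,\cW(T_\emptyset,x,W)]$.
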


\begin{proof}
The proof follows similar arguments as the proof of Theorem~\ref{thm:semi-linear}. We observe first that, by definition of $u$, it holds
\eqstar{
u(x) 
& = \E\left[\frac{e^{-\beta \eta^x} h(X^{x}_{\eta^x})}{\bar{F}(\eta^x)} \ind_{\tau\geq\eta^x} + \frac{\beta e^{-\beta \tau} c_{I}(X^{x}_{\tau})}{p_{I} \rho(\tau)}\prod_{i=0}^{\lvert I\rvert-1} \psi^{X^{x}_{\tau}}_i \ind_{\tau<\eta^x} \right].
}
where
\eqstar{
  \psi^{X^{x}_{\tau}}_i := \prod_{\substack{{k=(i,\ldots)\in\cK^x}\\{X^{k}_{T_{k}}\notin \cO}}} \frac{e^{-\beta\Delta T_{k}} h(X^{k}_{T_{k}})}{\bar{F}(\Delta T_{k})} \cW_{k} \prod_{\substack{{k=(i,\ldots)\in\cK^x}\\{X^{k}_{T_{k}}\in \cO}}} \frac{\beta e^{-\beta\Delta T_{k}} c_{I^{k}}(X^{k}_{T_{k}})}{p_{I^{k}} \rho(\Delta T_{k})} \cW_{k}.
}
It further follows from the branching property that, conditioned on $\cF_0$, $(\psi^{X^{x}_{\tau}}_i)_{i=0,\ldots,\lvert I \rvert-1}$ are independent random variables, among which the first $I_0$ are identical in law to $\psi^{X_0}$, the next $I_1$ are identical in law to $\psi^{X_0} b_1(x)\cdot\MalWt(T_{\emptyset},x,W)$ and so on, where $X_0$ is distributed as $X^{x}_{\tau}$ and independent of $\cF_n$ for all $n\in\N$. Through this argument, we deduce that
\eqstar{
  \E\Big[\prod_{i=0}^{\lvert I\rvert-1} \psi^{X^{x}_{\tau}}_i \,\Big|\, \cF_0  \Big] \ind_{\tau < \eta^x} = u^{I_0}\left(X^{x}_{\tau}\right) \prod^m_{i=1} v_i^{I_i}(X^{x}_{\tau}) \ind_{\tau < \eta^x}.
}
where $(v_i)_{i=1,\ldots,m}$, $v_i:\cO\mapsto\R$ are defined as
\eqstar{
v_i(x) := \E[\psi^x b_i(x)\cdot\MalWt(T_{\emptyset},x,W)].
}
Working backward along the lines of the proof of Proposition~\ref{prop:strongcase}, we deduce that 
\eqlnostar{eq:feynmankac}{
u(x) = \E\Bigl[e^{-\beta \eta^x}h(\bX^x_{\eta^x}) + \int^{\eta^x}_0 \beta e^{-\beta s} \Bigl(\sum_{l \in L} c_l u^{l_0} \prod^m_{i=1} v_i^{l_i}\Bigr)(\bX^x_s) \,ds\Bigr].
}
In particular, since $u$ and $(v_i)_{i=1,\ldots,m}$ are bounded by assumption, it follows from Assumption~\ref{ass:malliavin} that $u$ belongs to $\cC^1(\cO)\cap\cC(\bar{\cO})$ and 
\eqstar{
D u(x) & = \E\Bigl[e^{-\beta {\eta^x}}h(\bX^x_{\eta^x}) \cW_{\del\cO}(x,W) + \int^{\eta^x}_0 \beta e^{-\beta s} \Bigl(\sum_{l \in L} c_l u^{l_0} \prod^m_{i=1} v_i^{l_i}\Bigr)(\bX^x_s) \cW_{\cO}(s,x,W)\, ds\Bigr] \\
 & = \E[\psi^x \MalWt(T_{\emptyset},x,W)].
}
Thus, for all $i=1,\ldots,m$, $v_i$ coincides with $b_i\cdot Du$ and~\eqref{eq:feynmankac} reads as
\eqstar{
u(x) = \E\Bigl[e^{-\beta {\eta^x}}h(\bX^x_{\eta^x}) + \int^{\eta^x}_0 \beta e^{-\beta s} f(\cdot,u,Du)(\bX^x_s)\, ds\Bigr].
}
The fact that $u$ is a viscosity solution of PDE~\eqref{eq:main elliptic pde} now follows by classical arguments.
\end{proof}

\subsection{Automatic Differentiation Formula: the General Case}
\label{sec:nDmalliavin weight}

The aim of this section is to provide sufficient conditions to ensure that Assumption~\ref{ass:malliavin} holds and to derive explicit formula for $\cW$. 
The automatic differentiation formula discussed in the following originates from Thalmaier~\cite{thalmaier1997adf} and was subsequently developed by Delarue~\citep{delarue2003estimates} and Gobet~\citep{gobet2004revisiting}.

\begin{assumption}
  \label{ass:autodiff}
   \emph{(i)} The coefficients $(\mu,\sigma)$ belong to $\cC^{1,\alpha}(\bar{\cO})$.\\
   \emph{(ii)} The diffusion coefficient $\sigma$ is uniformly elliptic.\\
   \emph{(iii)} The boundary $\del \cO$ is of class $\cC^2$.\\
 \emph{(iv)}  The function $h$ can be extended to a function of class $\cC^{1,\alpha}$ on $\bar{\cO}$.
\end{assumption}

We start by establishing a technical lemma. Fix a finite horizon $T>0$ and denote for any $s>0$, 
\eqstar{
\theta_{s}(r,y):=\frac{1}{d\left(y,\partial\cO\right)^2\left(s-r\right)},\quad \text{for all }y\in\cO,\, r\in [0,s).
}

\begin{lemma}
\label{lem:delarue}
Under Assumption~\ref{ass:autodiff}, it holds for all $x\in\cO$ and $s > 0$,
\eqlnostar{eq:delarue1}{
\int_0^{{\eta^x}\wedge s} {\theta_s(r,\bX_r^x)\, dr} = \infty,\quad\P-\text{a.s.}
}
In addition, if we denote
\eqstar{
\zeta_s := \inf {\left\{ t>0\,:\  \int_0^t {\theta_s(r,\bX_r^x) \,dr} = 1\right\}},
}
then there exists $t<s$ such that $\zeta_s\leq {\eta^x}\wedge t$ and for all $q\geq  1$,

\eqlnostar{eq:delarue2}{
\E\left[\left(\int_0^{\zeta_{s\wedge T}} {\theta^2_{s\wedge T}(r,\bX_r^x) \,dr}\right)^q\right]\leq \frac{C}{d\left(x,\partial\cO\right)^{4q-2} (s\wedge T)^q},
}
where $C>0$ depends on $q$ and $T$ but not on $x$ or $s$.
\end{lemma}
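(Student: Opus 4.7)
The plan is: (i) divergence follows from a case split on $\{\eta^x > s\}$ vs.\ $\{\eta^x \leq s\}$ using It\^o's formula on the squared distance to the boundary; the deterministic cap $t < s$ in (ii) uses only boundedness of $\cO$; and the moment bound is obtained by reducing to a sharp control on $\sup_r d(\bX^x_r,\partial\cO)^{-2q}$ up to $\zeta_{s\wedge T}$, which follows from Delarue-Gobet-type estimates on the minimum of the boundary distance of the killed diffusion.

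For (i), on $\{\eta^x > s\}$ the path $(\bX^x_r)_{r\in[0,s]}$ stays in a compact subset of $\cO$, so $\inf_{r\in[0,s]}d(\bX^x_r,\partial\cO) > 0$ pathwise, $\theta_s(r,\bX^x_r) \geq c\,(s-r)^{-1}$, and the integral diverges. On $\{\eta^x \leq s\}$, apply It\^o's formula to the squared distance $\delta(y):=d(y,\partial\cO)^2$, which is $\cC^2$ on a tubular neighborhood of $\partial\cO$ thanks to Assumption~\ref{ass:autodiff}(iii); uniform ellipticity yields a nondegenerate quadratic variation for $\delta(\bX^x_\cdot)$ near $\eta^x$, and a law-of-iterated-logarithm-type argument comparing the normal component of $\bX^x$ to a one-dimensional Brownian motion shows that $d(\bX^x_r,\partial\cO)^{-2}$ fails to be integrable at $r=\eta^x$. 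Combined with $(s-r)^{-1}\geq s^{-1}$ on $[0,\eta^x]$, this yields the divergence.

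For the first half of (ii), boundedness of $\cO$ forces $\theta_s(r,\bX^x_r)\geq \mathrm{diam}(\cO)^{-2}(s-r)^{-1}$ while the particle is alive, so for the deterministic choice $t := s\bigl(1 - e^{-\mathrm{diam}(\cO)^2}\bigr) < s$, on $\{\eta^x\geq t\}$ one already has $\int_0^t \theta_s\,dr \geq 1$, hence $\zeta_s\leq t$, while on $\{\eta^x < t\}$ part~(i) gives $\zeta_s\leq \eta^x$; in both cases $\zeta_s\leq \eta^x\wedge t$ almost surely. For the quantitative bound, the defining equation $\int_0^{\zeta_{s\wedge T}}\theta_{s\wedge T}\,dr = 1$ combined with $s\wedge T - r \geq (s\wedge T)e^{-\mathrm{diam}(\cO)^2}$ on $[0,\zeta_{s\wedge T}]$ produces the crude estimate
\eqstar{
  \int_0^{\zeta_{s\wedge T}}\theta^2_{s\wedge T}(r,\bX^x_r)\,dr \ \leq\ \sup_{r\leq \zeta_{s\wedge T}}\theta_{s\wedge T}(r,\bX^x_r)\ \leq\ \frac{C}{(s\wedge T)\,\inf_{r\leq \zeta_{s\wedge T}} d(\bX^x_r,\partial\cO)^2}.
}
Taking $q$-th powers and expectations reduces the claim to the estimate
\eqstar{
  \E\Bigl[\sup_{r\leq \zeta_{s\wedge T}} d(\bX^x_r,\partial\cO)^{-2q}\Bigr] \ \leq\ \frac{C}{d(x,\partial\cO)^{4q-2}},
}
which is obtained by localizing at the first exit of $\bX^x$ from $B\bigl(x,d(x,\partial\cO)/2\bigr)$ --- on which $d(\bX^x_r,\partial\cO)\geq d(x,\partial\cO)/2$ makes the bound trivial --- and on the complementary event invoking the running-minimum-of-boundary-distance estimates of Delarue~\cite{delarue2003estimates} and Gobet~\cite{gobet2004revisiting}, themselves proved via It\^o calculus on $\delta(\bX^x_\cdot)$ and Green-function bounds for the killed diffusion.

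The main obstacle is the sharp exponent $4q-2$ rather than $4q$: a naive control based only on $\delta$ at the starting point yields a dependence $d(x,\partial\cO)^{-2q}$ times moments of a bounded random variable, which is the wrong scaling as $d(x,\partial\cO)\to 0$. Extracting the extra factor $d(x,\partial\cO)^2$ requires the fine interplay between the time horizon $s\wedge T$, the stopping time $\zeta_{s\wedge T}$, and the running minimum of $d(\bX^x_\cdot,\partial\cO)$ encoded in the Delarue-Gobet construction; verifying the regularity prerequisites (smoothness of $\delta$ in a uniform tubular neighborhood, uniform nondegeneracy of $|\sigma^* \nabla \delta|$, etc.) under Assumption~\ref{ass:autodiff} is by comparison routine.
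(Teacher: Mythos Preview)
Your treatment of the divergence in~\eqref{eq:delarue1} and the deterministic cap $\zeta_s\leq \eta^x\wedge t$ both match the paper's proof; the latter argument is identical (the paper uses $C=\mathrm{diam}(\cO)^2/4$ rather than $\mathrm{diam}(\cO)^2$, but this is cosmetic).

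The moment bound is where your route diverges from the paper's, and where there is a gap. The paper does not pass through a running-minimum estimate: it simply points out that~\eqref{eq:delarue1} and~\eqref{eq:delarue2} follow by repeating Delarue's Propositions~2.3--2.4 verbatim after replacing the weight $(r,y)\mapsto d(y,\partial\cO)(s-r)$ by $(r,y)\mapsto d(y,\partial\cO)\sqrt{s-r}$, which is what produces the exponent $4q-2$ in the spatial variable and $q$ in the time variable. Delarue's argument applies It\^o's formula directly to powers of this product and integrates; no intermediate bound on $\inf_{r\leq\zeta} d(\bX^x_r,\partial\cO)$ is needed or proved there. Your reduction $\int_0^{\zeta}\theta^2\,dr\leq \sup_{r\leq\zeta}\theta$ is correct, but the resulting claim
\eqstar{
\E\Bigl[\sup_{r\leq \zeta_{s\wedge T}} d(\bX^x_r,\partial\cO)^{-2q}\Bigr]\ \leq\ \frac{C}{d(x,\partial\cO)^{4q-2}}
}
is not a statement that appears in Delarue or Gobet; invoking ``running-minimum-of-boundary-distance estimates'' from those references is a citation to something that is not there.

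Your final paragraph also suggests some confusion about the exponents. You write that a naive control yields $d(x,\partial\cO)^{-2q}$ and call this ``the wrong scaling as $d(x,\partial\cO)\to 0$'', but since $2q\leq 4q-2$ for $q\geq 1$, the bound $d(x,\partial\cO)^{-2q}$ is \emph{stronger} than the target $d(x,\partial\cO)^{-(4q-2)}$ for small $d(x,\partial\cO)$ --- if a naive argument really gave that, there would be nothing left to do. The actual difficulty is the opposite: the infimum can be much smaller than $d(x,\partial\cO)$, so controlling its negative moments is not straightforward, and the precise interplay with the self-stopping time $\zeta$ is exactly what Delarue's direct It\^o computation captures. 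The cleanest fix is to drop the running-minimum detour and follow Delarue's computation with the modified weight, as the paper indicates.
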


\begin{proof}
The proof essentially follows from Delarue~\cite{delarue2003estimates}. 
Indeed, if $\cO$ is a ball, both identities~\eqref{eq:delarue1} and~\eqref{eq:delarue2} are easily obtained by repeating the arguments of Propositions 2.3 and 2.4 in~\cite{delarue2003estimates} while working with $(r,y)\mapsto d(y,\partial\cO)\sqrt{s-r}$ instead of $(r,y)\mapsto d(y,\partial\cO)(s-r)$. 
For an arbitrary domain, it suffices to work with a $\cC^2$--extension of the distance to the boundary (see, \textit{e.g.}, Gilbarg and Trudinger~\cite[Lemma 14.16]{gilbarg2015elliptic}). Additionally, it follows from~\eqref{eq:delarue1} that $\zeta_s\leq {\eta^x}\wedge s$. Furthermore, it holds for all $t < s$,
\eqstar{
\int_0^{t} {\theta_s(r,\bX_r^x)\, dr} \,\ind_{t\leq{\eta^x}} \geq - C^{-1} \log\left(1-\frac{t}{s}\right) \ind_{t\leq{\eta^x}},
}
where $C:=\mathrm{diam}(\cO)^2/4$. Thus for $t=(1-e^{-C})s$, we have $\zeta_s\leq {\eta^x}\wedge t$. 
\end{proof}

\begin{proposition}
\label{prop:autodiff}
Under Assumption~\ref{ass:autodiff}, the assertions of Assumption~\ref{ass:malliavin} are satisfied with
\eqstar{
\cW^{\top}_{\partial\cO}\left(x,W\right) & = \int_0^{\zeta_T}{\theta_T(r,\bX_r^x) \left( \sigma^{-1}(\bX_r^x) Y_r^x \right)^{\top}\, dW_r}, \\
\cW^{\top}_{\cO}\left(s,x,W\right) & = \int_0^{\zeta_{s\wedge T}}{\theta_{s\wedge T}(r,\bX_r^x)\left( \sigma^{-1}(\bX_r^x) Y_r^x \right)^{\top}\, dW_r},
}
where $Y^x$ is the tangent process given by 
\eqstar{
Y_s^x = I_d + \int_0^s {D\mu(\bX_r^x) Y_r^x \,dr} + {\sum_{i=1}^d \int_0^s D\sigma_i(\bX_r^x) Y_r^x \,dW^i_r}.
}
and $\sigma_i$ denotes the $i$th column of $\sigma$. In addition, it holds for all $q\geq 1$,
\eqlnostar{eq:estimates}{
\E\left[\left|\int_0^{\zeta_{s\wedge T}} {\theta_{s\wedge T}(r,\bX_r^x) \left( \sigma^{-1}(\bX_r^x) Y_r^x \right)^{\top}\, dW_r}\right|^{q}\right]\leq \frac{C}{\dd(x, \del \cO)^{2q-1} (s\wedge T)^{\frac{q}{2}}}.
}
\end{proposition}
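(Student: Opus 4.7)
The plan is to verify parts (i) and (ii) of Assumption~\ref{ass:malliavin} and the moment bound~\eqref{eq:estimates} by adapting the Thalmaier--Delarue--Gobet integration-by-parts technique, which sidesteps the non-Malliavin-differentiability of the exit time $\eta^x$ by working on the random interval $[0,\zeta_T]$ produced by Lemma~\ref{lem:delarue}, on which $\bX^x$ stays strictly inside $\cO$.

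For part (i), recognize via Feynman--Kac that $\varphi(x):=\E[e^{-\beta\eta^x}h(\bX^x_{\eta^x})]$ is the classical solution of $\cL\varphi-\beta\varphi=0$ in $\cO$ with $\varphi=h$ on $\del\cO$. Under Assumption~\ref{ass:autodiff}, Schauder theory (\textit{e.g.}\ \cite[Theorem~6.14]{gilbarg2015elliptic}) gives $\varphi\in\cC^{2,\alpha}(\cO)\cap\cC^{1,\alpha}(\bar\cO)$. To extract $D\varphi(x)$ in probabilistic form, apply It\^o to $r\mapsto e^{-\beta r}\varphi(\bX^x_r)$ on $[0,\zeta_T]$ (the PDE kills the drift); the resulting $\cF_{\zeta_T}$-martingale is then differentiated in $x$ using the tangent process $Y^x$, and a Bismut--Elworthy--Li integration by parts against the weight $\theta_T$---whose normalization $\int_0^{\zeta_T}\theta_T(r,\bX^x_r)\,dr=1$ is the very definition of $\zeta_T$---produces
\[ D\varphi(x) = \E\Bigl[e^{-\beta\zeta_T}\varphi(\bX^x_{\zeta_T})\int_0^{\zeta_T}\theta_T(r,\bX^x_r)\bigl(\sigma^{-1}(\bX^x_r)Y^x_r\bigr)^{\top}dW_r\Bigr]. \]
The strong Markov property at $\zeta_T$, combined with $\varphi(\bX^x_{\zeta_T})=\E[e^{-\beta(\eta^x-\zeta_T)}h(\bX^x_{\eta^x})\mid\cF_{\zeta_T}]$, reassembles this as the announced formula with weight $\cW_{\del\cO}(x,W)$.

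For part (ii), first treat a H\"older function $g$ vanishing on $\del\cO$: then $\psi(x):=\E[\int_0^{\eta^x}e^{-\beta s}g(\bX^x_s)\,ds]$ solves $\cL\psi-\beta\psi=-g$ in $\cO$ with $\psi=0$ on $\del\cO$, and applying the same BEL argument to the semimartingale $e^{-\beta r}\psi(\bX^x_r)+\int_0^r e^{-\beta s}g(\bX^x_s)\,ds$ yields the claimed representation of $D\psi(x)$ with weight $\cW_\cO(s,x,W)$. For bounded measurable $g$, approximate by such H\"older functions and pass to the limit using both the $L^q$-bound on the weight and the Green-function continuity established in Lemma~\ref{lem:continuity}(ii).

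Finally, the moment estimate~\eqref{eq:estimates} follows from Burkholder--Davis--Gundy combined with Lemma~\ref{lem:delarue}. After BDG and uniform ellipticity $|\sigma^{-1}|\leq\lambda^{-1/2}$, one bounds the $q$-th moment by $C_q\,\E[(\int_0^{\zeta_{s\wedge T}}\theta_{s\wedge T}^2|Y^x_r|^2\,dr)^{q/2}]$; factoring out $\sup_{r\leq T}|Y^x_r|^q$ and applying Cauchy--Schwarz gives
\[ C\,\E\bigl[\sup_{r\leq T}|Y^x_r|^{2q}\bigr]^{1/2}\cdot\E\Bigl[\Bigl(\int_0^{\zeta_{s\wedge T}}\theta_{s\wedge T}^2\,dr\Bigr)^{q}\Bigr]^{1/2}, \]
where the first factor is bounded uniformly in $x$ by standard tangent-process estimates under $\cC^{1,\alpha}$ coefficients, and the second is controlled by~\eqref{eq:delarue2} by $C\,d(x,\del\cO)^{-(2q-1)}(s\wedge T)^{-q/2}$. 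The main obstacle is the BEL integration by parts producing the weight: the singular choice of $\theta_T$ and the random horizon $\zeta_T$ are precisely what allow the Malliavin duality to fire while $\bX^x$ remains strictly inside $\cO$, decoupling the differentiation from the non-differentiable exit time. This is the technical heart of the argument and the reason Lemma~\ref{lem:delarue} is set up exactly as it is.
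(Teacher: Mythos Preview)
Your treatment of part~(i) and of the moment estimate~\eqref{eq:estimates} is essentially the paper's argument: Feynman--Kac regularity for $\varphi$, the martingale $N_s=e^{-\beta s}D\varphi(\bX^x_s)Y^x_s$, It\^o isometry against the $\theta_T$-weight, and then BDG combined with Lemma~\ref{lem:delarue} for the bound. One small omission: you invoke Schauder to get $\varphi\in\cC^{2,\alpha}$, but to show $N$ is a martingale you must apply It\^o to $D\varphi(\bX^x_s)$, which needs $\varphi\in\cC^3(\cO)$; the paper secures this via interior regularity (Gilbarg--Trudinger, Theorem~6.17), and you should too.

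The genuine gap is in part~(ii). Your plan is to work directly with the \emph{elliptic} function $\psi(x)=\E[\int_0^{\eta^x}e^{-\beta s}g(\bX^x_s)\,ds]$ and run the same BEL argument on the semimartingale $r\mapsto e^{-\beta r}\psi(\bX^x_r)+\int_0^r e^{-\beta s}g(\bX^x_s)\,ds$. But that procedure produces a weight that is \emph{constant in $s$}, namely $\int_0^{\zeta_T}\theta_T(r,\bX^x_r)(\sigma^{-1}Y^x_r)^\top dW_r$: once $s$ has been integrated out of $\psi$, there is no mechanism in the integration-by-parts to recover an $s$-dependent stopping time $\zeta_{s\wedge T}$ or an $s$-dependent kernel $\theta_{s\wedge T}$. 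The proposition, however, asserts the specific formula $\cW_\cO^\top(s,x,W)=\int_0^{\zeta_{s\wedge T}}\theta_{s\wedge T}(r,\bX^x_r)(\sigma^{-1}Y^x_r)^\top dW_r$, and this $s$-dependence is actually used downstream (it enters $\cW_k$ through the particle lifetime $\Delta T_k$, and the $(s\wedge T)^{-q/2}$ blow-up drives the choice of lifetime density $\rho$ in Section~\ref{sec:nDmalliavin weight}). To obtain that formula you must, as the paper does, fix $s$ and work with the \emph{parabolic} function $\chi(s,x):=\E[g(\bX^x_s)\ind_{s<\eta^x}]$: Green-function regularity gives $\chi(s,\cdot)\in\cC^{1,2}_b$ on $[t,T]\times\cO$ for each $0<t<s$; then $N_\theta=D\chi(s-\theta,\bX^x_\theta)Y^x_\theta$ is a martingale up to $\zeta_s$; It\^o isometry yields $D\chi(s,x)=\E[g(\bX^x_s)\ind_{s<\eta^x}\cW_\cO(s,x,W)]$ with the stated $s$-dependent weight; and finally Fubini plus differentiation under the integral (justified by~\eqref{eq:estimates}) gives the identity for $\int_0^{\eta^x}e^{-\beta s}g\,ds$. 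Working fiber-by-fiber in $s$ is not optional here---it is what generates the claimed $\cW_\cO(s,x,W)$.
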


This result is a slight extension of the automatic differentiation formula obtained by Delarue~\cite{delarue2003estimates} and Gobet~\cite{gobet2004revisiting}. We postpone the proof to Appendix~\ref{proof_autodiff}. Note that estimate~\eqref{eq:estimates} is an easy consequence of Lemma~\ref{lem:delarue}. Indeed, successively using the Burkh\"older-Davis-Gundy inequality, the boundedness of $\sigma^{-1}$ and the fact that $\sup_{0 \leq r \leq T} \{|Y^x_r|\}$ has finite moments, we derive for all $s\leq T$,
\eqstar{
\E \Bigl[ \big \vert \int_0^{\zeta_s} {\theta_s(r,\bX_r^x)\left( \sigma^{-1}(\bX_r^x) Y_r^x \right)^{\top}\, dW_r} \big \vert^{q} \Bigr]
&\leq C \E \Bigl[\sup_{0 \leq r \leq T} \{\left|Y_r\right|^{q}\} \Big(\int^{\zeta_s}_0 \theta_s^2(r,\bX^x_r) d r\Big)^{\frac{q}{2}}\Bigr]\\
&\leq C\E \Bigl[\bigl(\int^{\zeta_s}_0 \theta_s^2(r,\bX^x_r) d r \bigr)^q\Bigr]^{1/2}\\
&\leq \frac{C}{d(x, \del \cO)^{2q-1} s^{\frac{q}{2}}}.
}

\begin{remark}
The choice of weight function in the automatic differentiation formula is not unique. Actually we can replace $(\theta_s(r,\bX^x_r))_{r\geq 0}$ by any predictable process $(\tilde{\theta}_s(r))_{r\geq 0}$ satisfying appropriate integrability conditions and such that 
\eqstar{
\tilde{\theta}_s(r)=0~~\text{for all }r\geq {\eta^x}\wedge s \quad\text{ and }\quad \int_0^{{\eta^x}\wedge s} {\tilde{\theta}_s(r)\, dr} = 1.
}
We refer to Delarue~\cite{delarue2003estimates} for more examples.
\end{remark}

We now provide sufficient conditions to ensure that $(\psi^x)_{x\in\cO}$ and $(\psi^x b_i(x)\cdot\MalWt(T_{\emptyset},x, W))_{x\in\cO}$, $i=1,\ldots,m$,  are uniformly bounded in $L^q$ for $q\geq 1$. Let us define
\eqstar{
C_{1,q} & := \sup_{t\geq 0} {\left\{\frac{e^{-\beta t}}{\bar{F}(t)}\right\}} \sup_{x\in\del\cO} {\left\{|h(x)|\right\}} \sup_{x\in\cO, i=1,\ldots,m} {\left\{\E\left[\left|b_i(x)\cdot\cW_{\del\cO}\left(x,W\right)\right|^q\right]^{\frac{1}{q}} \vee 1\right\}}, \\
C_{2,q} & := \sup_{x\in\cO ,\, l\in L} {\left\{\frac{\left|c_l(x)\right|}{p_l} \right\}} \sup_{t\geq 0,\, x\in\cO ,\, i=1,\ldots, m} {\left\{\frac{\beta e^{-\beta t}}{\rho(t)} \left(\E\left[\left|b_i(x)\cdot\cW_{\cO}\left(t,x,W\right)\right|^q\right]^{\frac{1}{q}} \vee 1\right)\right\}}.
}
In view of~\eqref{eq:estimates}, $C_{1,q}$ and $C_{2,q}$ are clearly finite if the following condition is satisfied:
\eqstar{
\sup_{t\geq 0} {\left\{\frac{e^{-\beta t}}{\bar{F}(t)}\right\}} \vee \sup_{t\geq 0} {\left\{\frac{\beta e^{-\beta t}}{\sqrt{t} \rho(t)}\right\}} \vee \sup_{l\in L} \left\{\frac{\|c_l\|_{\infty}}{p_l}\right\} \vee \sup_{x\in\cO,\, i=1,\ldots, m} {\left\{\frac{\left|b_i(x)\right|}{d(x,\del\cO)^{2-\frac{1}{q}}}\right\}}  < \infty.
}
Unlike in the case of linear gradient term,  we need to chose a lifetime distribution $\rho(t)\neq\beta e^{-\beta t}$ to ensure that the first two terms in the above expression are finite.
For instance, we can take a Gamma distribution with shape parameter $0.5$ and rate parameter $\beta'<\beta$, \textit{i.e.}, 
$\rho(t)=\sqrt{\beta'/(\pi t)} e^{-\beta' t}$.
Another choice consists in taking a generalized gamma distribution of the form $\rho(t) = \beta'/(2\sqrt{t}) e^{-\beta' \sqrt{t}}$ for any $\beta'>0$.

Let us denote 
\eqstar{
\delta := 1-\inf_{x\in\cO}\left\{\E\left[\bar{F}\left(\eta^x\right)\right]\right\}.
}
Similar to Section~\ref{sec:integrability}, we then introduce a new probability mass function $(\tilde{p}_l)_{l\in \tilde{L}}$ with $\tilde{L}:=L\cup\{0_{m+1}\}$ where $0_{m+1}$ is the zero element of $\N^{m+1}$ as follows:
\eqstar{
\tilde{p}_{0_{m+1}} := 1 - \delta + \delta p_{0_{m+1}} ~~ \text{and} ~~ \tilde{p}_l &:= \delta p_l~~\text{for all }l\neq 0_{m+1},
}
and the corresponding transition matrix $\tilde{P}=(\tilde{P}_{i,j})_{i,j\geq 0}$ given by
\eqstar{
\tilde{P}_{0,0} = 1 ~~ \text{and} ~~ \tilde{P}_{i,i+j-1} = \sum_{|l|=j} \tilde{p}_l ~~ \text{for all } i\geq 1,\, j\geq 0.
}

\begin{proposition}
\label{prop:integrability_quasi}
Denote $C_{0,q}:=C_{1,q}\vee C_{2,q}$.
\newline
\noindent (i) If $C_{0,q}\leq 1$, then $(\psi^x)_{x\in\cO}$ and $(\psi^x b_i(x)\cdot\MalWt(T_{\emptyset},x, W))_{x\in\cO}$, $i=1,\ldots,m$, are bounded by $1$ in $L^q$. 
\newline
\noindent (ii) Denote by $R$ the common radius of convergence of $f(s):=\sum_{l\in L} {p_l s^{|l|}}$ and $\tilde{f}(s):=\sum_{l\in \tilde{L}} {\tilde{p}_l s^{|l|}}$. 
If $R>1$ and $\sum_{l\in\tilde{L}}{|l|\tilde{p}_l}<1$, then 
$(\psi^x)_{x\in\cO}$ and $(\psi^x b_i(x)\cdot\MalWt(T_{\emptyset},x,W))_{x\in\cO}$, $i=1,\ldots m$, are uniformly bounded in $L^q$ provided that $ C^q_{0,q} \leq \gamma$ where
\eqstar{
\gamma := \frac{s^*}{\tilde{f}(s^*)}=\frac{s^*}{1-\delta + \delta f(s^*)},
}
where $s^*$ is the solution (if any) to $s\tilde{f}'(s)=\tilde{f}(s)$ and $s^*=R$ otherwise. In addition, if $R=+\infty$, then $\gamma$ goes to infinity as $\mathrm{diam}(\cO)$ goes to zero.
\end{proposition}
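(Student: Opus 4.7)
The plan is to mirror the proof of Proposition~\ref{prop:extinction}, adapted to the marked/quasilinear setting. The essential new difficulty is that, unlike the linear-gradient case where one has the pathwise bound $|\psi^x|\le C_0^{|\cK^x|}$, here the factors $\cW_k$ in $\psi_k$ are only controlled in $L^q$ (by the $\sup_{i=1,\ldots,m}$ appearing in the definitions of $C_{1,q}$ and $C_{2,q}$), so one has to iterate a conditional bound generation by generation instead of a pathwise one.

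The first step is to establish the key inequality $\E[|\psi^x|^q]\le \E\bigl[C_{0,q}^{q|\cK^x|}\bigr]$. Writing $\psi^x=\prod_{k\in\cK^x}\psi_k$ where $\psi_k$ denotes the single-particle factor, I would condition on the filtration $\cF_n$ generated by all the data attached to particles of label-depth at most $n$. Given $\cF_n$, the triplets $(\tau^k,W^k,I^k)$ for $k\in\cK^x_{n+1}$ are mutually independent and independent of $\cF_n$, so the $|\psi_k|^q$'s are conditionally independent; a direct calculation --- splitting according to the events $\{\tau^k\ge\eta^k\}$ (absorption at $\partial\cO$) and $\{\tau^k<\eta^k\}$ (death inside $\cO$), using Fubini to integrate $\tau^k$ against $\rho$, and invoking the $\sup_{t,x,i}$ built into $C_{1,q}$ and $C_{2,q}$ (together with the $\vee 1$ that handles mark $0$) --- shows
\[
\E\bigl[|\psi_k|^q\,\bigm|\,\cF_n\bigr]\le C_{0,q}^q\quad\text{for every }k\in\cK^x_{n+1}.
\]
Iterating this conditional bound through the generations and sending $n\to\infty$ using Assumption~\ref{ass:extinction} (with monotone or dominated convergence according to whether $C_{0,q}\gtrless 1$) yields the desired inequality.

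The second step reduces $\E[C_{0,q}^{q|\cK^x|}]$ to a one-type extinction-time calculation by copying Step~(i) of Proposition~\ref{prop:extinction} almost verbatim: the only change is that, because the lifetime is no longer exponential, the probability for a particle at $y$ to be absorbed at the boundary equals $\E[\bar F(\eta^y)]$ instead of $\E[e^{-\beta\eta^y}]$, which is exactly the new definition of $\delta$. The multi-type structure collapses to the one-type chain $(\tilde N_n)$ through the marginal transitions $\tilde P_{i,i+j-1}=\sum_{|l|=j}\tilde p_l$, and Daley's theorem~\cite{daley1969qsd} then gives $\E[C_{0,q}^{q\zeta}]<\infty$ whenever $C_{0,q}^q\le\gamma$, together with the limit $\gamma\to\infty$ as $\mathrm{diam}(\cO)\to 0$, by the same computation as in the linear-gradient case.

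Part~(i) is then immediate: if $C_{0,q}\le 1$, then $C_{0,q}^{q|\cK^x|}\le 1$ pathwise, so Step~1 directly gives $\E[|\psi^x|^q]\le 1$; Part~(ii) combines Steps~1 and~2. The uniform $L^q$-bound on $\psi^x\,b_i(x)\cdot\cW(T_\emptyset,x,W)$ is obtained by exactly the same argument applied to a root particle carrying a ``virtual mark'' $i\in\{1,\ldots,m\}$: the extra factor replaces the trivial root weight $\cW_\emptyset=1$ (the actual root has mark~$0$) by the mark-$i$ weight $b_i(x)\cdot\cW(T_\emptyset,x,W)$, whose $L^q$ norm on both the boundary and interior events is already absorbed in $C_{1,q}$ and $C_{2,q}$ through the $\sup_{i=1,\ldots,m}$. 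The hard part will be Step~1, specifically the generation-by-generation conditional-independence bookkeeping and the Fubini/Markov computation verifying that $C_{1,q}$ and $C_{2,q}$ really do control the per-particle contribution in $L^q$; Steps~2--4 are then close transcriptions of the linear-gradient argument.
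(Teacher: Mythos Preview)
Your proposal is correct and follows essentially the same strategy as the paper. The paper's proof is extremely terse: it introduces the $\sigma$-algebra $\mathcal G:=\sigma(\tau^k,I^k,\,k\in\K)$, asserts that conditioning on $\mathcal G$ yields $\E[|\psi^x|^q]\le\E[C_{0,q}^{q|\cK^x|}]$ (and the same with the extra root weight $b_i(x)\cdot\cW(T_\emptyset,x,W)$), and then simply refers back to Proposition~\ref{prop:extinction} for Part~(ii). Your Step~1 unpacks exactly this conditioning argument, but you organize it via the generation filtration $\cF_n$ (which also contains the Brownian increments of earlier generations) rather than freezing only the lifetimes and types at once; this makes the conditional-independence structure and the per-particle $L^q$ bound more transparent, since given $\cF_n$ the starting positions of the $(n{+}1)$st-generation particles are known and their factors depend only on the fresh, independent triplets $(\tau^k,W^k,I^k)$. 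Your Steps~2--4 (stochastic domination with the modified $\delta=1-\inf_x\E[\bar F(\eta^x)]$, Daley's theorem, the ``virtual mark'' device for the gradient term) are exactly what the paper means by ``the same arguments as in Proposition~\ref{lem:unifinteasy}'' and ``the same inequality holds for $\psi^x b_i(x)\cdot\cW(T_\emptyset,x,W)$.''
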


\begin{proof}
Let us denote by $\mathcal{G}$ the following $\sigma$-algebra:
\eqstar{
\mathcal{G}:=\sigma\left(\tau^k, I^k, k\in\K\right).
}
Conditioning by $\mathcal{G}$, we obtain for all $x\in\cO$,
\eqstar{
\E\left[\left|\psi^x\right|^q\right] \leq \E\left[C_{0,q}^{q\left|\cK^x\right|}\right],
}
and the same inequality holds if we consider $\psi^x b_i(x)\cdot\MalWt(T_{\emptyset},x, W)$ instead of $\psi^x$. Part~(i) of Proposition~\ref{prop:integrability_quasi} follows immediately. As for Part~(ii), the desired result follows by the same arguments as in Proposition~\ref{lem:unifinteasy}.
\end{proof}

\subsection{Automatic Differentiation Formula: the One-Dimensional Case}
\label{sec:1Dmalliavin weight}

The automatic differentiation formula given by Proposition~\ref{prop:autodiff}, though quite general, has several drawbacks for numerical applications. Indeed one needs to compute a stochastic integral for each particle that holds a non-zero mark. In addition, the weight function $\cW(s,x,W)$ explodes as $s$ goes to $0$ or $x$ approaches the boundary. In this section, we provide a simpler automatic differentiation formula that is satisfied for a one-dimensional Brownian motion exiting from an interval.

Let us assume throughout this section that $d=1$, $b=0$, $\sigma=1$ and $\cO=(-r,r)$ for some $r>0$. We denote 
\eqstar{
W^x_s := x + W_s,\quad s\geq 0,\,x\in\cO.
}
We also introduce the function $\mathfrak{W}:\cO\x \bar{\cO} \to \R$ as
\eqstar{
\fW(x,y) :=
\begin{cases}
\frac{\sqrt{2\beta}}{\tanh\left(\sqrt{2\beta}(x+r)\right)}, & \text{if } y>x, \\
\frac{\sqrt{2\beta}}{\tanh\left(\sqrt{2\beta}(x-r)\right)}, & \text{if } y\leq x.
\end{cases}
}
Both lemmas below show that Assumption~\ref{ass:malliavin} is satisfied in this setting.

\begin{lemma}
\label{lem:adfin1d}
For any $g:\cO\to\R$ bounded measurable, the map  $x\mapsto\E[\int_0^{{\eta^x}} {e^{-\beta s} g(W^{x}_{s}) \,ds}]$  belongs to $\cC^1(\cO)\cap\cC(\bar{\cO})$ and 
\eqstar{
D \E\left[ \int_0^{{\eta^x}} {e^{-\beta s} g(W^{x}_{s}) \,ds} \right] &=  \E\left[ \int_0^{{\eta^x}} {e^{-\beta s}g(W^{x}_{s}) \fW(x,W^x_s) \,ds} \right].
}
\end{lemma}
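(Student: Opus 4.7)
The plan is to identify $v(x) := \E[\int_0^{\eta^x} e^{-\beta s} g(W^x_s)\,ds]$ with an integral against the Green's function of $\tfrac{1}{2}\partial_{xx} - \beta$ on $(-r,r)$ with zero Dirichlet boundary conditions, and to recognize $\fW$ as the logarithmic $x$-derivative of this Green's function. Writing $p(s,x,y)$ for the transition density of Brownian motion killed at $\{-r, r\}$, Fubini's theorem gives
\[
v(x) = \int_{-r}^r G(x,y)\, g(y)\,dy, \qquad G(x,y) := \int_0^\infty e^{-\beta s} p(s,x,y)\,ds.
\]
The argument then reduces to an explicit computation of $G$, the identification $\partial_x G/G = \fW$, and a justification of differentiation under the integral.

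Setting $\alpha := \sqrt{2\beta}$, the fundamental solutions $\phi_1(x) := \sinh(\alpha(x+r))$ and $\phi_2(x) := \sinh(\alpha(r-x))$ of $\tfrac{1}{2} u'' - \beta u = 0$ satisfy the Dirichlet condition at $-r$ and $r$ respectively, with Wronskian $-\alpha\sinh(2\alpha r)$. The classical construction then yields $G(x,y) = \tfrac{2}{\alpha\sinh(2\alpha r)}\,\phi_1(x \wedge y)\,\phi_2(x \vee y)$, and a direct computation gives, away from the diagonal, $\partial_x G(x,y)/G(x,y) = \phi_1'(x)/\phi_1(x) = \alpha\coth(\alpha(x+r))$ for $y > x$ and $\partial_x G(x,y)/G(x,y) = \phi_2'(x)/\phi_2(x) = -\alpha\coth(\alpha(r-x))$ for $y < x$. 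Both branches agree with $\fW(x,y)$ via the identity $-\coth(\alpha(r-x)) = 1/\tanh(\alpha(x-r))$.

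To pass the derivative inside the integral, one reads off the explicit formula that $x \mapsto G(x,y)$ is continuous on $[-r,r]$ and smooth on each of $\{x<y\}$ and $\{x>y\}$, with one-sided derivatives uniformly bounded in $y$ on any compact $K \subset (-r,r)$. Hence $x \mapsto G(x,y)$ is Lipschitz in $x$ uniformly in $y$ on $K$, so the difference quotients $(G(x_0+h,y) - G(x_0,y))/h$ are uniformly bounded in $(h,y)$ for small $h$ and converge pointwise (in $y \neq x_0$) to $\partial_x G(x_0,y)$. Dominated convergence yields $v'(x) = \int_{-r}^r G(x,y)\,\fW(x,y)\, g(y)\,dy$, which translates back to $\E[\int_0^{\eta^x} e^{-\beta s} g(W^x_s)\,\fW(x,W^x_s)\,ds]$ after reversing the Fubini step. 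Continuity of $v$ at interior points and of $v'$ on $\cO$ follow from the same dominated-convergence argument, while $v(x) \to 0$ as $x \to \pm r$ follows from $|v(x)| \leq \|g\|_\infty\, \E[\eta^x]$ together with $\E[\eta^x] \to 0$.

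The main technical obstacle is the jump of $\partial_x G(\cdot, y)$ at $x = y$, which precludes a direct application of the mean value theorem across the diagonal. This is bypassed by handling the difference quotient through dominated convergence using the uniform Lipschitz bound on $G$ extracted from the explicit formula; since the diagonal $\{(x,x): x \in \cO\}$ is Lebesgue-null, the ambiguity of $\partial_x G$ there does not affect the integral representation.
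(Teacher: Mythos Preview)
Your proof is correct and follows essentially the same approach as the paper: represent $v$ as an integral against the Green's function of $\tfrac{1}{2}\partial_{xx}-\beta$ on $(-r,r)$ with zero Dirichlet data, compute $\partial_x G/G$ explicitly and identify it with $\fW$, then differentiate under the integral. The only minor differences are that the paper reaches the Green's function representation by first treating continuous $g$ via the ODE and variation of parameters, then invoking a monotone class argument, whereas you go through the killed transition density and Fubini (which handles bounded measurable $g$ directly); and you are more explicit about the jump of $\partial_x G(\cdot,y)$ across the diagonal, which the paper absorbs into the phrase ``differentiation under the integral sign.''
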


\begin{proof}
Let us denote 
\eqstar{
\chi(x) := \E\left[ \int_0^{{\eta^x}} {e^{-\beta s} g(W^{x}_{s}) \,ds} \right].
}
We first assume that $g$ is continuous. Then $u$ satisfies the following ODE:
\eqlnostar{eq:ODE0}{
\frac{1}{2} \chi'' - \beta \chi + g = 0 ~~ \text{in } \cO, \quad~ \chi = 0~~ \text{on } \del\cO.
}
By standard arguments, including variation of parameters, 
we deduce that
\eqlnostar{eq:green1d}{
\chi(x) = \int_{-r}^{r} {G(x,y) g(y) \,dy},
}
where $G:\bar{\cO}\times \cO\to \R$ is given by
\eqstar{
G(x,y) := \frac{-2}{\sqrt{2\beta}} \left(\sinh\left(\sqrt{2\beta}(x-y)^+\right) - \frac{\sinh\left(\sqrt{2\beta}(r+x)\right)}{\sinh\left(2\sqrt{2\beta}r\right)} \sinh\left(\sqrt{2\beta}(r-y)\right)\right).
}
By a monotone class argument, \eqref{eq:green1d} remains valid if $g$ is only assumed bounded measurable. Then the continuity of $\chi$ follows immediately from the dominated convergence theorem.
Further, a direct calculation yields that 
\eqstar{
\frac{\partial_x G(x,y)}{G(x,y)} =
\begin{cases}
\frac{\sqrt{2\beta}}{\tanh\left(\sqrt{2\beta}(x+r)\right)}, & \text{if } y>x, \\
\frac{\sqrt{2\beta}}{\tanh\left(\sqrt{2\beta}(x-r)\right)}, & \text{if } y<x.
\end{cases}
}
The desired result then follows by differentiation under the integral sign in~\eqref{eq:green1d}.
\end{proof}

\begin{lemma}
\label{lem:adfout1d}
The map  $x\mapsto\E[e^{-\beta {\eta^x}} h(W^{x}_{{\eta^x}})]$  belongs to $\cC^1(\cO)\cap\cC(\bar{\cO})$ and 
\eqstar{ 
D \E\left[ e^{-\beta {\eta^x}} h(W^{x}_{{\eta^x}}) \right] &=  \E\left[ e^{-\beta {\eta^x}} h(W^{x}_{{\eta^x}}) \fW(x,W^x_{\eta^x}) \right].
}
\end{lemma}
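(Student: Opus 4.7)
The plan is to mimic the proof of Lemma 4.3 but, since the integrand is now a boundary functional, to reduce everything to the two (discounted) exit probabilities on each endpoint and solve the corresponding ODEs explicitly.

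First I would observe that, since $W^x_{\eta^x}\in\{-r,r\}$ almost surely, the map $\phi(x):=\E[e^{-\beta\eta^x}h(W^x_{\eta^x})]$ decomposes as
\begin{equation*}
\phi(x) \;=\; h(r)\,p^{+}(x) + h(-r)\,p^{-}(x),\qquad p^{\pm}(x):=\E\!\left[e^{-\beta\eta^x}\mathbf{1}_{W^x_{\eta^x}=\pm r}\right].
\end{equation*}
A standard Feynman--Kac / It\^o's formula argument (as cited for Proposition 3.1) identifies $p^{+}$ and $p^{-}$ as the unique $\cC^2(\cO)\cap\cC(\bar{\cO})$ solutions of $\tfrac12 v''-\beta v=0$ on $(-r,r)$ with boundary data $p^{+}(r)=1,\,p^{+}(-r)=0$ and $p^{-}(-r)=1,\,p^{-}(r)=0$, respectively. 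This gives continuity on $\bar{\cO}$ for free and reduces the whole statement to an explicit calculation.

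Next I would solve these two-point boundary value problems in closed form using the basis $\cosh(\sqrt{2\beta}\,\cdot)$, $\sinh(\sqrt{2\beta}\,\cdot)$:
\begin{equation*}
p^{+}(x)=\frac{\sinh\!\bigl(\sqrt{2\beta}(x+r)\bigr)}{\sinh\!\bigl(2\sqrt{2\beta}\,r\bigr)},\qquad
p^{-}(x)=\frac{\sinh\!\bigl(\sqrt{2\beta}(r-x)\bigr)}{\sinh\!\bigl(2\sqrt{2\beta}\,r\bigr)}.
\end{equation*}
Both formulas extend continuously to $\bar{\cO}$ and are smooth on $\cO$, hence $\phi\in\cC^1(\cO)\cap\cC(\bar{\cO})$. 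Differentiating yields
\begin{equation*}
(p^{+})'(x)=\frac{\sqrt{2\beta}\,\cosh\!\bigl(\sqrt{2\beta}(x+r)\bigr)}{\sinh\!\bigl(2\sqrt{2\beta}\,r\bigr)}
=p^{+}(x)\cdot\frac{\sqrt{2\beta}}{\tanh\!\bigl(\sqrt{2\beta}(x+r)\bigr)}=p^{+}(x)\,\fW(x,r),
\end{equation*}
and, using $\cosh$ even together with $\sinh(\sqrt{2\beta}(r-x))/\sinh(\sqrt{2\beta}(x-r))=-1$,
\begin{equation*}
(p^{-})'(x)=-\frac{\sqrt{2\beta}\,\cosh\!\bigl(\sqrt{2\beta}(r-x)\bigr)}{\sinh\!\bigl(2\sqrt{2\beta}\,r\bigr)}=p^{-}(x)\,\fW(x,-r).
\end{equation*}

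Finally I would recombine: since $\fW(x,r)$ is the value of $\fW(x,y)$ for $y>x$ and $\fW(x,-r)$ the value for $y\leq x$,
\begin{equation*}
\phi'(x)=h(r)\,p^{+}(x)\,\fW(x,r)+h(-r)\,p^{-}(x)\,\fW(x,-r)=\E\!\left[e^{-\beta\eta^x}h(W^x_{\eta^x})\fW(x,W^x_{\eta^x})\right],
\end{equation*}
which is the claim. There is no serious obstacle: the only point requiring a moment's care is verifying that the logarithmic derivative of $p^{\pm}$ reproduces exactly the piecewise definition of $\fW$; this is just the identity $\cosh/\sinh=1/\tanh$ together with the observation that $y=r$ (resp.\ $y=-r$) always satisfies $y>x$ (resp.\ $y\leq x$) for $x\in\cO$.
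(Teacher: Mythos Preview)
Your proof is correct and follows essentially the same approach as the paper: solve the ODE $\tfrac12\varphi''-\beta\varphi=0$ with boundary data $h$ explicitly via the two discounted hitting probabilities, then compute the logarithmic derivative and match it to $\fW$. The paper merely packages your $p^{\pm}$ into a single kernel $H(x,y)=\sinh(\sqrt{2\beta}(x+y))/\sinh(2\sqrt{2\beta}\,y)$ for $y\in\{-r,r\}$, but the computation is identical.
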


\begin{proof}
Let us denote 
\eqstar{
\varphi(x) := \E\left[ e^{-\beta {\eta^x}} h(W^{x}_{{\eta^x}}) \right].
}
It satisfies the following ODE:
\eqlnostar{eq:ODE}{
\frac{1}{2} \varphi'' - \beta \varphi = 0 ~~ \text{in } \cO, \quad~ \varphi=h ~~ \text{on } \del\cO.
}
It follows that
\eqstar{
\varphi(x) = H(x,r) h(r) + H(x,-r) h(-r),
}
where $H:\cO\x\del\cO\to\R$ is given by 
\eqstar{
H(x,y) := \frac{\sinh(\sqrt{2\beta}(x+y))}{\sinh(2\sqrt{2\beta}y)}.
}
In particular, $\varphi$ clearly belongs to $\cC^1(\cO)\cap\cC(\bar{\cO})$. 
To conclude, it remains to observe that
\eqstar{
\frac{\partial_x H(x,y)}{H(x,y)} = \frac{\sqrt{2\beta}}{\tanh(\sqrt{2\beta}(x+y))}.
}
\end{proof}

\begin{remark}
We observe that the weight function still explodes when $x$ approaches the boundary as in Section~\ref{sec:nDmalliavin weight}.
In view of the proof of Lemma~\ref{lem:adfin1d}, this feature cannot be avoided since $\chi'$ satisfies an ODE of type~\eqref{eq:ODE0} with non-zero boundary conditions.
As a consequence, we need to assume once again that the non-linear gradient term vanishes at the boundary of the domain in order to control the explosion of the weight function. 
\end{remark}

Since the weight function does not explode in time anymore, we can work with $\rho(t)=\beta e^{-\beta t}$ as in Section~\ref{sec:semilinear}. Thus we have 
\eqstar{ 
\psi^x := \prod_{\substack{{k \in \cK^x}\\{X^k_{T_k}\notin \cO}}} h(X^k_{T_k}) \cW_k \prod_{\substack{{k \in \cK^x}\\{X^k_{T_k}\in \cO}}} \frac{c_{I^k}(X^k_{T_k})}{p_{I^k}} \cW_k,
}
where $\cW_k$ are constructed from the formulas of Lemmas~\ref{lem:adfin1d}--\ref{lem:adfout1d}. 
We now provide sufficient conditions to ensure that $(\psi^x)_{x\in\cO}$ and  $(\psi^x b_i(x)\cdot\MalWt(T_{\emptyset},x, W))_{x\in\cO}$, $i=1,\ldots,m$,  are uniformly bounded in $L^q$. Let us define
\eqstar{
C_1 & := \sup_{x\in\cO,\, y\in\bar{\cO},\, i=1,\ldots, m} {\left\{\left|b_i(x) \fW\left(x,y\right)\right|\right\}} \vee 1.
}
Clearly, the constant $C_1$ is finite if the following condition is satisfied:
\eqlnostar{eq:1d quasilinear integrable}{
\sup_{x\in\cO,\, i=1,\ldots, m} {\left\{\frac{\left|b_i(x)\right|}{d(x,\del\cO)}\right\}}  < \infty.
}
Now we consider the family $(\bar{\psi}^x)_{x\in\cO}$ given by
\eqstar{
\bar{\psi}^x := \prod_{\substack{{k \in \cK^x}\\{X^k_{T_k}\notin \cO}}} C_1 h(X^k_{T_k}) \prod_{\substack{{k \in \cK^x}\\{X^k_{T_k}\in \cO}}} C_1 \frac{c_{I^k}(X^k_{T_k})}{p_{I^k}}.
}
The tools developed in Section~\ref{sec:integrability} allow to derive sufficient conditions to ensure that the family $(\bar{\psi}^x)_{x\in\cO}$ is bounded in $L^q$ . Since $|\psi^x|\leq |\bar{\psi}^x|$, the boundedness on $(\psi^x)_{x\in\cO}$ in $L^q$ follows immediately. The same holds for $(\psi^x b_i(x)\cdot\MalWt(T_{\emptyset},x,W))_{x\in\cO}$ for $i=1,\ldots,m$.

\section{Numerical Examples with Related Technical Discussions}
\label{section: examples}
In this section we demonstrate the application of our theoretical results with the help of several examples. We restrict our attention to semi-linear elliptic PDEs over a rectangular domain where the underlying diffusion is a Brownian motion. This allows us to generate unbiased samples for exit time and position using the walk on squares scheme as implemented in the numerical library \texttt{exitbm} developed by Lejay~\cite{lejay:inria-00561409}. We note that our theoretical results are applicable more widely but the numerical estimates for semi-linear PDEs driven by Brownian motion over rectangular domain do not involve any bias. We performed all the numerical computations on a machine with 2,5 GHz Intel Core i5 processor and 4GB RAM.

\subsection{Example 1}
\label{sec:cosh} 
We begin with the following semi-linear PDE in one-dimension (ODE)
\eqlnostar{eq:example1 semi-linear}{
u'' - u + u^3 = 0,
}
with explicit solution $u(x) = \frac{\sqrt{2}}{\cosh(x)}.$ To test the validity of our probabilistic representation in Theorem~\ref{thm:semi-linear}, we reformulate the above PDE, in the form \eqref{eq:semi-linearpde}, as following
\eqlnostar{eq:ode example 1}{
 \frac{1}{2}u'' + \left(\frac{1}{2}u^3 + \frac{1}{2}u - u\right) =0 ~~ \text{in } (-r,r), \quad~ u(x) = \frac{\sqrt{2}}{\cosh(x)},\, x \in \{-r,r\},
}
For domain $\cO = (-r,r),$ we know from standard literature that the first eigenvalue of Laplacian is given as $\lambda_1 = \tfrac{\pi^2}{4 r^2}.$ Then, by choosing $\beta = 1,$ $p_1 = \tfrac{1}{2}$ and $p_3 = \tfrac{1}{2},$ the criteria for almost sure extinction of the branching Brownian motion \eqref{eq:extinction} holds for any $r \leq \tfrac{\pi}{\sqrt{8}}$. In addition, the integrability condition as presented in Proposition \ref{prop:unifint} is satisfied for $q=1$ by $v(x) = \frac{\sqrt{2}}{\cosh(x)}.$  Therefore, our probabilistic representation of the solution given as 
\eqlnostar{eq:prob rep example1}{
u(x)= \E\Biggl[\prod_{\substack{{k \in \cK^x}\\{X^k_{T_k}\notin \cO}}} h(X^k_{T_k})\Biggr],
}
is valid for any $r \leq  \tfrac{\pi}{\sqrt{8}}.$ Once more, we make use of Proposition \ref{prop:unifint} and see that for $q=2,$ it is satisfied by $v(x) = \sqrt{1+\lambda} \cos\bigl( \sqrt{\lambda} x \bigr)$ with $\lambda =6$ and $ r \leq 0.338.$ This implies that $(\psi^x)_{x \in \cO}$ is uniformly bounded in $L^2$ and we are certain that it has finite variance. In Table \ref{tab:example1}, for $r=0.3,$ we illustrate numerical results for different values of starting position $x$ with 1$\times 10^6$ Monte Carlo sample paths which exhibits the accuracy of our estimator.

\begin{table}[H]
\centering
\small
\begin{tabular}{c c c c c c}
\toprule
$x$ & Estimate & 99\% conf. interval & Std. Dev./Mean & Relative error & Run time (secs)\\
\toprule
    0 & 1.4144 & [1.4134, 1.4153] & 0.2644 & 0.0112\% & 13\\
-0.2 & 1.3859 & [1.3852, 1.3866] & 0.1872 & 0.0358\% & 26\\
\hline
\end{tabular}
\caption{{\em Numerical results for the unbiased estimator of probabilistic representation of one-dimensional semi-linear PDE \eqref{eq:ode example 1}.}}
\label{tab:example1}
\end{table}
We observe that even for larger values of $r,$ our probabilistic representation provides an accurate estimator but with a standard deviation estimate which converges slowly. In Table \ref{tab:example1r}, we vary the size of the interval to test the accuracy and notice that as more branching particles are generated for larger intervals, the estimator has higher variance and run time.
\begin{table}[H]
\centering
\small
\begin{tabular}{c c c c}
\toprule
$r$ & 99\% conf.  & Std. Dev./Mean & Run time  \\
       & interval 	   &  							  & (secs) 		\\
\toprule
0.4 & [1.413654, 1.417886] & 0.5793  & 17\\
0.5 & [1.410199, 1.417666] & 0.9134  & 23\\
\hline
\end{tabular}
\caption{{\em Numerical results for the unbiased estimator for different values of $r$ and $x=0.$}}
\label{tab:example1r}
\end{table}
\begin{remark}
\label{remark:cosh_example}
If $r\geq \arcosh(\sqrt{2})$, the solution provided by the probabilistic representation takes values in $[0,1]$ and thus it does not coincide with $u(x)=\tfrac{\sqrt{2}}{\cosh(x)}$. In particular, in view of Proposition~\ref{prop:semi-linearcase}, it turns out that the sequence $(\psi_n^x)_{n\in\N}$ is not uniformly integrable for $\arcosh(\sqrt{2})\leq r\leq \tfrac{\pi}{\sqrt{8}}$. In Table \ref{tab:example09}, we present the results which support our observation.

\begin{table}[H]
\centering
\small
\begin{tabular}{c c c c c}
\toprule
$x$ & Estimate & 99\% conf. interval & Std. Dev./Mean & Run time (secs)\\
\toprule
0 & 0.9597 & [0.9595, 0.9600] & 0.0746 & 123\\
-0.2 & 0.9612 & [0.9611, 0.9614] & 0.0722 & 121\\
\hline
\end{tabular}
\caption{{\em Numerical results for the unbiased estimator of probabilistic representation for domain $(-0.9,0.9).$}}
\label{tab:example09}
\end{table}
\end{remark}

\subsection{Example 2}
We consider another semi-linear PDE  in one dimension. It is given as 
\eqlnostar{eq:ode example 2}{
\frac{1}{2} u'' + \left( \frac{1}{2}  -\frac{3}{2}u^2 - u\right) = 0,
}
with solution $u(x) = 1 + 2 \tan^2(x).$ Then, for domain $\cO = (-r,r)$, $\beta = 1$ and probability mass function as $p_0= 0.25$ and $p_2= 0.75,$ the branching Brownian motion goes extinct almost surely for $r \leq \tfrac{\pi}{2}.$ Next, to verify that our probabilistic representation holds, we work with the conditions in Proposition \ref{prop:extinction}. We first compute 
\eqstar{
\tilde{p}_0 := p_0 + \left(1-p_0\right)\inf_{x\in\cO}\left\{\E\left[e^{-\eta^x}\right]\right\} ~~ \text{and} ~~ \tilde{p}_2 &:= p_2 \left(1-\inf_{x\in\cO}\left\{\E\left[e^{-\eta^x}\right]\right\}\right).
}
From standard calculations, we get
\eqstar{
\inf_{x\in\cO} {\left\{\E\left[e^{-\eta^x}\right]\right\}} =\inf_{x\in\cO} {\left\{\frac{\cosh (\sqrt{2 } x)}{\cosh (\sqrt{2 } r)}\right\}} = \frac{1}{\cosh (\sqrt{2} r)} .
}
This gives us $
\tilde{p}_0 =  0.25 + \tfrac{0.75}{\cosh (\sqrt{2} r)}$ and $\tilde{p}_2 = 0.75 \left(1 - \tfrac{1}{\cosh (\sqrt{2} r)}\right).$ In addition, the integrability constant $C_0 =  \max\{2,1+ 2 \tan^2(r)\}.$ Next, in view of~\eqref{eq:limit threshold}, the choice of threshold is given as $\gamma = \tfrac{1}{\sqrt{4\tilde{p}_2(1-\tilde{p}_2)}}.$ Then, for our probabilistic representation of the solution to be valid, we need that $C_0 < \gamma$, which gives us that $r \leq 0.31.$ As in the previous example, we would like to compute a confidence interval for the estimate of the solution. Once again, by checking the conditions in Proposition \ref{prop:extinction}, we get that for our estimator $(\psi_x)_{x \in \cO}$ to be uniformly bounded in $L^2,$ we must choose $r \leq 0.146.$ We set $r=0.14,$ and obtain the following results for $1 \times 10^6$ Monte Carlo sample paths in Table \ref{tab:example2} which exhibits the accuracy of our estimator.

\begin{table}[H]
\centering
\small
\begin{tabular}{c c c c c c}
\toprule
$x$ & Estimate & 99\% conf. interval & Std. Dev./Mean & Relative error & Run time (secs)\\
\toprule
0     & 0.9999 & [0.9989, 1.0001] & 0.3931 & 0.0097\% & 10\\
-0.1 &  1.0198 & [1.0190, 1.0205] & 0.2745 & 0.0351\% & 12\\
\hline
\end{tabular}
\caption{{\em Numerical results for the unbiased estimator of probabilistic representation of one-dimensional semi-linear PDE \eqref{eq:ode example 2}.}}
\label{tab:example2}
\end{table}

In the course of our experiments, we chose different values of $r$ and observed that  even for higher values, we are able to obtain accurate estimates of the solution. For $r=0.3,$ we present the results in Table \ref{tab:example2_1}.
\begin{table}[H]
\centering
\small
\begin{tabular}{c c c c c c}
\toprule
$x$ & Estimate & 99\% conf. interval & Std. Dev./Mean & Relative error & Run time (secs)\\
\toprule
0.0  & 1.0006 & [0.9971, 1.0040] & 1.3378 & 0.0575\% & 12\\
-0.1 & 1.0203 & [1.0168, 1.0234] & 1.3641 & 0.0209\% & 17\\
\hline
\end{tabular}
\caption{{\em Numerical results for the unbiased estimator of probabilistic representation of one-dimensional semi-linear PDE \eqref{eq:ode example 2}.}}
\label{tab:example2_1}
\end{table}

\subsection{Example 3}
Next, we consider a multidimensional semi-linear PDE  
\eqlnostar{eq:example3 semi-linear}{
\Delta u = 2 d (u^3+u),
}
with an explicit solution $u(x) = \tan(\sum^d_{i=1}x_i)$ where $x = (x_1,\ldots,x_d)\in\R^d.$ We reformulate the above PDE as
\eqstar{
 \frac{1}{2}\Delta u + d \left(-u^3 - u\right) =0 ~~ \text{in } \cO,\quad~ u = \tan\Big(\sum^d_{i=1}x_i\Big) , \, x\in \partial\cO,
}
with $\cO = (-r,r)^d$ for $r > 0$ and set $\beta = d$ and $p_3=1.$ With $\lambda_1 = \tfrac{d \pi^2}{4 r^2},$ the branching Brownian motion goes extinct for $r \leq \tfrac{\pi}{4}.$ 

We first consider $d=2.$ For $r < \tfrac{\pi}{8},$ the integrability constant $C_0 < 1$ and thus our estimator will have finite moments. In order to check if our probabilistic representation is valid for even a larger domain, we verify the condition in Proposition \ref{prop:extinction} for $q=1.$ We need to compute
\eqstar{
\tilde{p}_0 := \inf_{x\in\cO} {\left\{\E\bigl[e^{-d \eta^x}\bigr]\right\}} ~~ \text{and} ~~ 
\tilde{p}_3 := 1-\tilde{p}_0.
}
In the absence of an explicit formula for the Laplace transform of exit time of Brownian motion from a multidimensional rectangular domain, we estimate it by sampling the exit time using the numerical library developed by Lejay \cite{lejay:inria-00561409}. The integrability constant needs to be less than the threshold $\gamma$ in Proposition~\ref{prop:extinction} for our probabilistic representation to be valid. 
In view of~\eqref{eq:limit threshold}, the choice of threshold is given as $\gamma = \sqrt[3]{\tfrac{4}{27\tilde{p}_0^2 (1-\tilde{p}_0)}}.$ From our estimation procedures, we deduce that $C_0 < \gamma$ for $r < 0.484.$ Here, we exhibit our numerical results for $5 \times 10^5$ Monte Carlo sample paths with $r=0.48$ and different values of $x$ in Table~\ref{tab:example2D}. We observe that our estimator remains accurate but with an increased variance which is an expected effect due to an increased variance in the estimation of exit time and position of multi-dimensional Brownian motion using the library \texttt{exitbm}.
\begin{table}[H]
\centering
\small
\begin{tabular}{c c c c c c}
\toprule
$x$ & Estimate & 99\% conf. interval & Std. Dev./Mean & Relative error & Run time (secs)\\
\toprule
(0.0,0.0) & -0.0001 & [-0.0021, 0.0019] & -- & -- & 30\\
(0.1,0.0) & 0.0998 & [0.0978, 0.1018] & 5.4883 & 0.5726\% & 34\\
(0.2,0.1) & 0.3095 & [0.3075, 0.3114] & 1.7342 & 0.0426\% & 30\\
(0.2,0.2) & 0.4245 & [0.4226, 0.4265] & 1.2646 & 0.4144\% & 25\\
\hline
\end{tabular}
\caption{{\em Numerical results for the unbiased estimator of probabilistic representation of semi-linear PDE \eqref{eq:example3 semi-linear} in $d=2$ with $\beta = 2.$}}
\label{tab:example2D}
\end{table}

Typically, finite difference methods are quite commonly used to numerically solve PDEs. However, such methods are practically implementable and provide stable estimates only when $d \leq 3.$ Thus, we study PDE \eqref{eq:example3 semi-linear} in the case of $d=4$ to illustrate the broader applicability of our method. We extend the modules in library \texttt{exitbm} for our purpose using the existing functions. For $r < \tfrac{\pi}{16},$ the integrability constant $C_0 < 1$ and thus the estimator will have finite moments. Furthermore, through numerical estimation procedures, we have that $C_0 < \gamma$ for $r < 0.242.$ In Table~\ref{tab:example4D}, we exhibit our results with $5 \times 10^5$ Monte Carlo sample paths for $r=0.24$ which shows that our probabilistic representation provides an accurate estimator of the true solution.
\begin{table}[H]
\centering
\small
\begin{tabular}{c c c c c c}
\toprule
$x$ & Estimate & 99\% conf. interval & Std. Dev./Mean & Relative error & Run time (secs)\\
\toprule
(0,0,0,0) & 0.0000 & [-0.0010, 0.0012] & -- & -- & 190\\
(0.1,0,0,0) & 0.1009 & [0.0999, 0.1019] & 2.6185 & 0.5584\% & 309\\
(0.1,0.1,0,0) & 0.2021 & [0.2012, 0.2031] & 1.2986 & 0.2924\% & 413\\
(0.1,0.1,0.1,0) & 0.3097 & [0.3087, 0.31072] & 0.8750 & 0.1305\% & 510\\
\hline
\end{tabular}
\caption{{\em Numerical results for the unbiased estimator of probabilistic representation of semi-linear PDE \eqref{eq:example3 semi-linear} in $d=4$ with $\beta = 4.$}}
\label{tab:example4D}
\end{table}

\section{Conclusions}
In this work, we introduced a probabilistic representation for the solution of semi-linear elliptic PDEs with polynomial non-linearity by using the theory of branching diffusion processes. We performed a detailed analysis to derive explicit conditions under which our representations remain valid. In the linear gradient case, we essentially established that it holds provided that the domain is small enough. In the general case, we need to assume further that the nonlinear gradient term vanishes at the boundary of the domain to balance the explosion of the weight functions. As illustrated by our analysis of the one-dimensional case, this feature is inherent to any automatic differentiation formula. This restricts the choice of semi-linear elliptic PDEs for which we could use our probabilistic representation to obtain numerical estimates. For semi-linear elliptic PDEs of  form \eqref{eq:semi-linearpde}, we illustrated the applicability of our theoretical results to obtain numerical estimates with the help of several examples including multi-dimensional. Finally, in the numerical implementation of semi-linear elliptic PDEs driven by general diffusion processes, the error analysis due to the discretization of the process has been left for future research.

\section*{Acknowledgement}
The authors would like to thank Pierre Henry-Labord\`ere, Zhenjie Ren, Xiaolu Tan and Nizar Touzi for their valuable comments and suggestions. The first author research was conducted while at CMAP, \'Ecole Polytechnique and is part of the Chair {\it Financial Risks} of the {\it Risk Foundation}. The second author acknowledges the financial support of ERC 321111 Rofirm.

\begin{appendix}
\section{Proof of Proposition~\ref{prop:autodiff}}
\label{proof_autodiff}

For the purpose of clarity, we split the proof in two lemmas.

\begin{lemma}
\label{lem:adfin}
For any $g$ bounded measurable, the map  $x\mapsto\E[\int_0^{{\eta^x}} {e^{-\beta s} g(\bX^{x}_{s}) \,ds}]$ belongs to $\cC^1(\cO)\cap\cC(\bar{\cO})$ and 
\eqstar{ 
D \E\left[ \int_0^{{\eta^x}} {e^{-\beta s} g(\bX^{x}_{s}) \,ds} \right] &=  \E\left[ \int_0^{{\eta^x}} {e^{-\beta s}g(\bX^{x}_{s}) \cW_{\cO}(s,x,W) \,ds} \right].
}
\end{lemma}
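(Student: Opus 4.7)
The goal is a Bismut--Elworthy--Li style formula in the presence of the exit time $\eta^x$. The core difficulty is that $\eta^x$ depends non-smoothly on $x$ and the standard integration by parts would require differentiating through it. The plan, following Thalmaier--Delarue--Gobet, is to introduce the auxiliary stopping time $\zeta_{s\wedge T}$ from Lemma~\ref{lem:delarue}, which satisfies $\zeta_{s\wedge T} < \eta^x \wedge (s\wedge T)$ almost surely, and carry out the integration by parts only on $[0,\zeta_{s\wedge T}]$, so that $\eta^x$ never has to be differentiated.

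I would first apply Fubini to write
\eqstar{
u(x) = \int_0^{\infty} e^{-\beta s}\,\chi_s(x)\,ds,\qquad \chi_s(x):=\E[g(\bX^x_s)\ind_{s<\eta^x}],
}
and work pointwise in $s\in(0,\infty)$. By a standard smoothing/mollification argument one may first assume that $g$ is smooth with compact support in $\cO$; the bounded measurable case is then recovered by a monotone class argument together with the uniform bound derived below.

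Next, the strong Markov property at $\zeta_{s\wedge T}$ combined with $\zeta_{s\wedge T}<\eta^x$ a.s.\ yields $\chi_s(x) = \E\bigl[\chi_{s-\zeta_{s\wedge T}}(\bX^x_{\zeta_{s\wedge T}})\bigr]$. Differentiating via the tangent flow $Y^x$, the defining identity $\int_0^{\zeta_{s\wedge T}} \theta_{s\wedge T}(r,\bX^x_r)\,dr = 1$ is exactly what is needed to perform the standard Bismut integration by parts and transfer the spatial derivative onto the Brownian motion, producing the weight $\cW_\cO$:
\eqstar{
D\chi_s(x) = \E\bigl[\chi_{s-\zeta_{s\wedge T}}(\bX^x_{\zeta_{s\wedge T}})\,\cW_\cO(s\wedge T,x,W)\bigr].
}
Undoing the Markov conditioning then gives $D\chi_s(x) = \E[g(\bX^x_s)\ind_{s<\eta^x}\,\cW_\cO(s,x,W)]$ for $s\leq T$; for $s>T$ one splits the integration at $T$, uses the Markov property there, and applies the same argument on $[0,T]$.

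Finally, I would multiply by $e^{-\beta s}$ and integrate in $s$. The exchange of derivative, expectation, and $s$-integral is justified by the estimate~\eqref{eq:estimates}, which gives a Cauchy--Schwarz bound of order $\|g\|_\infty \, d(x,\partial\cO)^{-1}(s\wedge T)^{-1/2}$ on the integrand; this is locally integrable against $e^{-\beta s}$ on $(0,\infty)$ and locally uniform in $x\in\cO$. The continuity of $Du$ on $\cO$ and of $u$ on $\bar{\cO}$ follow from dominated convergence, using the same bound together with the almost sure continuity of the path-dependent integrands established by Lemma~\ref{lem:continuity_proba}--type arguments. The main technical obstacle is keeping the Bismut step rigorous in the presence of the random endpoint $\zeta_{s\wedge T}$ and the singular weight $\theta_{s\wedge T}$, and in particular verifying that the stochastic integral defining $\cW_\cO$ is well-defined and has the required $L^q$ moments; this is precisely what Lemma~\ref{lem:delarue} provides.
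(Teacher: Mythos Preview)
Your proposal follows the same Thalmaier--Delarue--Gobet strategy as the paper: reduce via Fubini to $\chi(s,x)=\E[g(X^x_s)\ind_{s<\eta^x}]$, use the auxiliary stopping time $\zeta_{s\wedge T}<\eta^x\wedge s$ together with the normalization $\int_0^{\zeta_{s\wedge T}}\theta_{s\wedge T}(r,X^x_r)\,dr=1$ to carry out the Bismut step, and then integrate in $s$ using the moment bound~\eqref{eq:estimates}.

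One point deserves more care. You propose to differentiate the identity $\chi_s(x)=\E[\chi_{s-\zeta}(X^x_{\zeta})]$ ``via the tangent flow $Y^x$'', but $\zeta=\zeta_{s\wedge T}$ depends on $x$ through the whole path $X^x$, so a direct chain rule is not available; and mollifying $g$ does not by itself make $\chi_s$ smooth, since the indicator $\ind_{s<\eta^x}$ is still present. The paper handles this differently: it first uses the Green-function representation to show that $\chi\in\cC^{1,3}([t,T]\times\cO)$ for any $0<t<T$ directly for bounded measurable $g$ (parabolic smoothing does the work, no mollification is needed), then applies It\^o's formula to see that $N_\theta:=D\chi(s-\theta,X^x_\theta)Y^x_\theta$ is a martingale on $[0,\zeta_s]$, and finally pairs the It\^o expansion of $\chi(s-\zeta_s,X^x_{\zeta_s})$ against the weight via It\^o's isometry to obtain $D\chi(s,x)=\E[N_{\zeta_s}]$. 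This never requires differentiating through $\zeta_s$. Your outline lands in the same place, but the martingale identity---rather than a direct differentiation---is the mechanism that makes the step rigorous.
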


\begin{proof}
The proof consists of four steps. The first three parts essentially follow by repeating the arguments in Gobet~\cite{gobet2004revisiting}. 
Denote
\eqstar{
\chi(s,x):=\E\left[g\left(\bX_{s}^{x}\right)\ind_{s< {\eta^x}}\right].
}

\vspace{0.5pc}

\noindent\textsl{First step.} 
Let us collect first preliminary results on the regularity of the function $\chi$.
Recall that, as established in the proof of Lemma~\ref{lem:continuity}, we have the following representation
\eqstar{
\chi(s,x) = \int_{\cO} {G(s,x;0,y) g(y) \,dy},
}
where $G$ is the Green function of PDE~\eqref{eq:parabolic} (see, \textit{e.g.}, Lady\v{z}enskaja \textit{et al.}~\cite[Theorem 4.16.2]{ladyzenskaja1968parabolic} or Friedman~\cite[Theorem 3.16]{friedman64parabolic}). 
Furthermore, in view of Theorem~4.16.3 in~\cite{ladyzenskaja1968parabolic}, for any $0<t<T$, we can differentiate under the integral sign and thus $\chi$ belongs to $\cC^{1,2}_b([t,T]\x\cO)$ and satisfies
\eqlnostar{eq:linearpde2}{
\partial_t \chi - \cL \chi  =0,~~ \text{in } [t,T]\x\cO.
}
In addition, under Assumption~\ref{ass:autodiff}, $\chi$ belongs to $C^{1,3}([t,T]\x\cO)$ (see, \textit{e.g.}, Friedman~\cite[Theorem 3.10]{friedman64parabolic}).

\vspace{0.5pc}

\noindent\textsl{Second step.} Next we show that $N_\theta:= D \chi(s-\theta,\bX^x_\theta) Y^x_\theta$ is a martingale up to time $\zeta_s$, \textit{i.e.}, $(N_{\zeta_s\wedge \theta})_{\theta\geq0}$ is a martingale.
Using It\^o's formula, we obtain
\eqstar{
N_{\zeta_s\wedge \theta} = D \chi(s,x) + \sum^{d}_{i=1} \int_0^{\zeta_s\wedge \theta}  \left( D \chi(s-r,\bX^x_r) D \sigma_i(\bX^x_r) + \sigma_i^{\top}(\bX^x_r) D^2 \chi(s-r,\bX^x_r)\right)Y^x_r \, dW^i_r.
}
In addition, due to Lemma~\ref{lem:delarue}, there exists $t<s$ such that $\zeta_s\leq t$. As the maps $D \chi$ and $D^2 \chi$ are bounded in $[s-t,s]\x\cO$, we conclude that $(N_{\zeta_s\wedge \theta})_{\theta\geq 0}$ is a martingale.

\vspace{0.5pc}

\noindent\textsl{Third step.} Then let us show that 
\eqstar{
D \chi(s,x) = \E\Bigl[g(\bX^x_{s})\ind_{s<{\eta^x}} \cW_{\cO}\left(s,x,W\right) \Bigr].
}
Indeed, It\^o's formula yields that
\eqstar{
\chi(s-\zeta_s,\bX^x_{\zeta_s})
&=\chi(s,x) + \int_0^{\zeta_s}D \chi(s-r,\bX_r^x) \sigma(\bX_r^x) \, dW_r.
}
Successively using Markov property and It\^o's isometry, we obtain
\eqstar{
& \E\left[g\left(\bX_s^{x}\right)\ind_{s\leq {\eta^x}}\left( \int_0^{\zeta_s}{\theta_s(r,\bX_s^r) \left(\sigma^{-1}(\bX_r^x) Y_r^x\right)^{\top}\, dW_r}\right)^{\top}\right] \\
	& = \E\left[\chi\left(s-\zeta_s,\bX^x_{\zeta_s}\right)\left( \int_0^{\zeta_s}{\theta_s(r,\bX_s^r) \left(\sigma^{-1}(\bX_r^x) Y_r^x\right)^{\top}\, dW_r}\right)^{\top}\right] \\
    & = \E\left[\int_0^{\zeta_s}{\theta_s(r,\bX_r^x) N_r\, dr}\right].
}
Once more, successively using the results that $N_{\zeta_s\wedge r}=\E[N_{\zeta_s}\,|\,\cF_r]$ and $\int_0^{\zeta_s}{\theta_s(r,\bX_r^x)\, dr} =1$, we deduce that
\eqstar{
\E\left[\int_0^{\zeta_s}{\theta_s(r,\bX_r^x) N_r\, dr}\right] 
= \E\left[N_{\zeta_s} \int_0^{\zeta_s}{\theta_s(r,\bX_r^x)\, dr}\right]
= \E\left[N_{\zeta_s}\right]
=D \chi(s,x).
}

\noindent\textsl{Fourth step.} We are now in a position to complete the proof. From Fubini's theorem we obtain 
\eqlnostar{eq:interres1}{
\E \Bigl[\int^{{\eta^x}}_0 e^{-\beta s}g(\bX^x_{s}) \,ds\Bigr] = \int^{\infty}_0 e^{-\beta s} \E \Bigl[ g(\bX^x_s)\ind_{s<{\eta^x}} \Bigr] \,ds.
}
Finally, in view of the estimates in Proposition~\ref{prop:autodiff}, we can apply differentiation under the integral sign in~\eqref{eq:interres1}. This completes the proof.
\end{proof}

\begin{lemma}
The map  $x\mapsto\E[e^{-\beta {\eta^x}} h(\bX^{x}_{{\eta^x}})]$  belongs to $\cC^1(\cO)\cap\cC(\bar{\cO})$ and 
\eqstar{ 
D \E\left[ e^{-\beta {\eta^x}} h(\bX^{x}_{{\eta^x}}) \right] &=  \E\left[ e^{-\beta {\eta^x}} h(\bX^{x}_{{\eta^x}}) \cW_{\del\cO}(x,W) \right].
}
\end{lemma}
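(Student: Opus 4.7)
The plan is to imitate the argument of Lemma~\ref{lem:adfin}, adapted to the time-independent (elliptic) setting. Set $\varphi(x) := \E[e^{-\beta \eta^x} h(\bX^x_{\eta^x})]$. Standard Feynman--Kac reasoning shows that $\varphi$ solves the linear Dirichlet problem $\cL\varphi - \beta\varphi = 0$ in $\cO$ with boundary data $h$. The uniform ellipticity of $\sigma\sigma^\top$, the $\cC^{1,\alpha}$-regularity of $(\mu,\sigma)$, the $\cC^2$-regularity of $\del\cO$ and the $\cC^{1,\alpha}(\bar{\cO})$-extension of $h$ let Schauder theory (Gilbarg--Trudinger~\cite{gilbarg2015elliptic}) deliver $\varphi\in\cC^{1,\alpha}(\bar{\cO})$ together with interior regularity $\varphi\in\cC^{3,\alpha}_{\mathrm{loc}}(\cO)$. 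In particular $D\varphi$ is continuous and bounded on $\bar{\cO}$, which already yields the regularity claim of the lemma.

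By Lemma~\ref{lem:delarue}, $\zeta_T<\eta^x$ almost surely, so $\bX^x$ stays in $\cO$ on $[0,\zeta_T]$. Applying It\^o's formula to $e^{-\beta\theta}\varphi(\bX^x_\theta)$ on $[0,\zeta_T]$ and using $\cL\varphi=\beta\varphi$ gives
\begin{equation*}
e^{-\beta\zeta_T}\varphi(\bX^x_{\zeta_T}) = \varphi(x) + \int_0^{\zeta_T} e^{-\beta r} D\varphi(\bX^x_r)\sigma(\bX^x_r)\,dW_r.
\end{equation*}
Next, mirroring the second step of Lemma~\ref{lem:adfin}, I would show that $N_\theta := e^{-\beta\theta}D\varphi(\bX^x_\theta)Y^x_\theta$ is a martingale up to $\zeta_T$: formally differentiating $\cL\varphi=\beta\varphi$ in $x$ and combining with the SDE governing $Y^x$, the It\^o drift of $N_\theta$ simplifies to $e^{-\beta\theta}[D(\cL\varphi)(\bX^x_\theta)-\beta D\varphi(\bX^x_\theta)]Y^x_\theta\equiv 0$. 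It\^o's isometry, combined with $\sigma\sigma^{-1}=I_d$, then gives
\begin{equation*}
\E\!\left[e^{-\beta\zeta_T}\varphi(\bX^x_{\zeta_T})\cW_{\del\cO}(x,W)\right] = \E\!\left[\int_0^{\zeta_T}\theta_T(r,\bX^x_r)\,N_r\,dr\right],
\end{equation*}
and a short integration by parts using $\int_0^{\zeta_T}\theta_T(r,\bX^x_r)\,dr=1$ together with the martingale property of $N$ reduces this to $N_0=D\varphi(x)$. Finally, since $\cW_{\del\cO}(x,W)$ is $\cF_{\zeta_T}$-measurable and $\zeta_T<\eta^x$, the strong Markov property yields $\E[e^{-\beta\eta^x}h(\bX^x_{\eta^x})\mid\cF_{\zeta_T}]=e^{-\beta\zeta_T}\varphi(\bX^x_{\zeta_T})$, and conditioning produces the announced identity for $D\varphi(x)$.

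The main obstacle is the martingale property of $N$. A naive application of It\^o to $D\varphi(\bX^x)Y^x$ involves $D^2\varphi$, which may blow up near $\del\cO$ since Schauder theory only delivers $\varphi\in\cC^{3}_{\mathrm{loc}}(\cO)$ and not up to the boundary. I would resolve this by localization: introduce $\tau_\epsilon:=\inf\{t:d(\bX^x_t,\del\cO)\leq \epsilon\}$, verify the martingale identity for $(N_{\theta\wedge\zeta_T\wedge\tau_\epsilon})$ on $\{d(\cdot,\del\cO)>\epsilon\}$, where all relevant derivatives of $\varphi$ are bounded by interior Schauder estimates, and then let $\epsilon\to 0$ using the uniform boundedness of $D\varphi$ on $\bar{\cO}$, the finite moments of $\sup_{r\leq T}|Y^x_r|$, and the $L^2$ estimate~\eqref{eq:delarue2}.
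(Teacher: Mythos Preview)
Your argument is correct and follows essentially the same route as the paper: establish $\varphi\in\cC^{1,\alpha}(\bar{\cO})\cap\cC^{3}(\cO)$ by Schauder theory, show that $N_\theta=e^{-\beta\theta}D\varphi(\bX^x_\theta)Y^x_\theta$ is a martingale via It\^o and the PDE, and combine with It\^o's isometry and $\int_0^{\zeta_T}\theta_T=1$ to recover $D\varphi(x)$. Two cosmetic differences: the paper runs It\^o for $e^{-\beta\cdot}\varphi(\bX^x_\cdot)$ all the way to $\eta^x$ (so $e^{-\beta\eta^x}h(\bX^x_{\eta^x})$ appears directly, without your strong-Markov step), and it handles your localization worry in one line by observing that the It\^o decomposition of $N_{\eta^x\wedge s}$ is a local martingale on $[0,\eta^x)$ (interior $\cC^3$ suffices) and then upgrading to a true martingale via the global bound $|N_{\eta^x\wedge s}|\leq \|D\varphi\|_{\infty}\sup_{r\leq s}|Y^x_r|$, which is integrable.
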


\begin{proof}
This proof is similar to the proof of Lemma~\ref{lem:adfin} and we split it in three parts. Denote
\eqstar{
\varphi(x):=\E\left[e^{-\beta {\eta^x}} h\left(\bX^{x}_{{\eta^x}}\right)\right].
}

\vspace{0.5pc}

\noindent\textsl{First step.} Let us collect first preliminary results on the regularity of the function $\varphi$.
Recall that $\varphi\in\cC^2(\cO)\cap\cC(\bar{\cO})$ satisfies the following PDE:
\eqlnostar{eq:linearpde3}{
\cL \varphi - \beta \varphi  =0 ~~ \text{in } \cO,\quad~ \varphi  = h ~~ \text{on } \del\cO.
}
In addition, since $h\in\cC^{1,\alpha}(\bar{\cO})$ by assumption, it is known that $\varphi\in\cC^{1,\alpha}(\bar{\cO})$ (see, \textit{e.g.}, Gilbarg and Trudinger~\cite[Theorem 8.34]{gilbarg2015elliptic}). Furthermore, under Assumption~\ref{ass:autodiff}, the function $\varphi$ belongs to $C^{3}(\cO)$ (see, \textit{e.g.}, Gilbarg and Trudinger~\cite[Theorem 6.17]{gilbarg2015elliptic}).

\vspace{0.5pc}

\noindent\textsl{Second step.} Next we show that $N_s:=e^{-\beta s} D \varphi(\bX^x_s) Y^x_s$ is a martingale up to time $\eta^x$, \textit{i.e.}, $(N_{{\eta^x}\wedge s})_{s\geq 0}$ is a martingale. Using It\^o's formula, we obtain
\eqstar{
N_{{\eta^x}\wedge s} = D \varphi(x) + \sum^d_{i=1}\int_0^{{\eta^x}\wedge s} e^{-\beta r}\left(D \varphi(\bX^x_r)D \sigma_i(\bX^x_r) + \sigma^{\top}_i(\bX^x_r) D^2 \varphi(\bX^x_r)\right)Y^x_r \, dW^i_r.
}
Thus  $(N_{{\eta^x}\wedge s})_{s\geq 0}$ is a local martingale. To conclude, it remains to observe that for any $s\geq 0$,
\eqstar{
\E\left[\sup_{0\leq r\leq s} \left\{N_{{\eta^x}\wedge r}\right\}\right]<\infty,
}
since $D \varphi$ is bounded
and $\sup_{0\leq r\leq s} \{|Y^x_r|\}$ admits finite moments.

\vspace{0.5pc}

\noindent\textsl{Third step.} We are now in a position to complete the proof. Indeed, It\^o's formula yields for all $s\geq 0$,
\eqstar{
h(\bX^x_{\eta^x}) = \varphi(x) +  \int_0^{{\eta^x}}  D \varphi(\bX_r^x) \sigma (\bX_r^x) \, dW_r.
}
Using It\^o's isometry, we obtain
\eqstar{
\E\left[h\left(\bX_{\eta^x}^{x}\right)\left( \int_0^{\zeta_T}{\theta_T(r,\bX_s^r) \left(\sigma^{-1}(\bX_r^x) Y_r^x\right)^\top \, dW_r}\right)^\top\right] 
     = \E\left[\int_0^{\zeta_T}{\theta_T(r,\bX_r^x) N_r\, dr}\right].
}
Finally, successively using the results that $N_{\zeta_T\wedge r}=\E[N_{\zeta_T}\,|\,\cF_r]$ and $\int_0^{\zeta_T}{\theta_T(r,\bX_r^x)\, dr} =1$, we deduce that
\eqstar{
\E\left[\int_0^{\zeta_T}{\theta_T(r,\bX_r^x) N_r\, dr}\right] 
= \E\left[N_{\zeta_T} \int_0^{\zeta_T}{\theta_T(r,\bX_r^x)\, dr}\right]
= \E\left[N_{\zeta_T}\right]
= D \varphi(x).
}
\end{proof}

\end{appendix}

\bibliographystyle{abbrvnat}
\bibliography{nonlinellipticbranching}
\end{document}